\tikzset{3D/.style={x={(0.8cm,0.6cm)}, y={(-1.1cm,0.4cm)}, z={(0cm,1.1cm)}}, %
edge/.style={semithick, line join=round, line cap=round}, %
inconsistent/.style={edge, dashed}, %
face/.style={fill=black!15, fill opacity=0.7, edge}, %
vertex/.style={circle, inner sep=1pt, fill}, %
hyperplane/.style={thin, densely dotted}, %
baseline={($ (current bounding box.west) - (0,1ex) $)}, %
hyperpoint/.style={edge, decoration={markings, mark=at position 0.5 with {\draw [thin] (0pt,-1.5pt) -- (0pt,1.5pt);}}, postaction=decorate}, every pin edge/.style={gray, thin}}
\renewcommand{\emptyset}{\varnothing}
\renewcommand{\epsilon}{\varepsilon}
\renewcommand{\tilde}{\widetilde}
\DeclarePairedDelimiter{\abs}{\lvert}{\rvert}
\DeclarePairedDelimiter{\norm}{\lVert}{\rVert}
\newcommand{\Squelta}{{\mathord{{\tikz[baseline=0.5pt]{\fill [even odd rule] (0,0) [rounded corners=0.15pt] rectangle (0.68em,0.68em) (0.04em,0.07em) [sharp corners] rectangle (0.68em-0.09em,0.68em-0.04em); \useasboundingbox (-0.08em,0) rectangle (0.68em+0.08em,0.68em);}}}}}
\newcommand{\CAT}{\text{CAT}}
\DeclareMathOperator{\link}{link}
\DeclareMathOperator{\openstar}{star}
\DeclareMathOperator{\closedstar}{\overline{star}}
\DeclareMathOperator{\conv}{conv}
\newcommand{\downset}{\operatorname{\downarrow}}
\newcommand{\upset}{\operatorname{\uparrow}}
\newcommand{\cons}{\leftrightarrow}
\newcommand{\incons}{\nleftrightarrow}
\newcommand{\conssqcup}{\underset{\cons}{\sqcup}}
\newcommand{\inconssqcup}{\underset{\incons}{\sqcup}}
\theoremstyle{plain}
\newtheorem{thm}{Theorem}[section]
\newtheorem{lma}[thm]{Lemma}
\newtheorem{prop}[thm]{Proposition}
\newtheorem{crl}[thm]{Corollary}
\theoremstyle{definition}
\newtheorem{dfn}[thm]{Definition}
\theoremstyle{remark}
\title{Relating $\CAT(0)$ cubical complexes and flag simplicial complexes} 
\author{Rowan Rowlands}
\begin{document}

\maketitle

\begin{abstract}
Given a finite $\CAT(0)$ cubical complex, we define a flag simplicial complex associated to it, called the crossing complex. We show that the crossing complex holds much of the combinatorial information of the original cubical complex: for example, hyperplanes in the cubical complex correspond to vertex links in the crossing complex, and the crossing complex is balanced if and only if the cubical complex is cubically balanced. The most significant result is that the sets of $f$-vectors of $\CAT(0)$ cubical complexes and flag simplicial complexes are equal, up to an invertible linear transformation.
\end{abstract}

\section{Introduction} \label{sec:introduction}

Simplicial complexes are well-known, well-studied objects used in many areas of math, from topology to optimisation, 
and their combinatorial aspects have been widely explored. Cubical complexes, while less well-known, are also important objects in several areas of math. In this paper, we consider special cases of each, namely flag simplicial complexes and $\CAT(0)$ cubical complexes, and explore some of the many interesting connections between their combinatorial properties. We do this by defining the \emph{crossing complex}, a flag simplicial complex associated to each $\CAT(0)$ cubical complex.

Flag simplicial complexes arise as the clique complex of a graph, i.e.\ the simplicial complex obtained by filling in every clique of a graph with a face. Much research has been done on their combinatorial structure, particularly their $f$-vectors and metric properties \cite{art:Adamszek-Hladky,art:Gal,art:Charney-Davis,art:Adiprasito-Benedetti-hirschconj,art:Frohmader}.

Similarly, $\CAT(0)$ cubical complexes first arose in metric space theory, where they were constructed to provide examples of non-positively curved metric spaces: for cubical complexes, the non-positive curvature condition reduces to a simple combinatorial criterion, according to a theorem by \citet{book:Gromov} (see \cref{thm:Gromov} below). More recently, $\CAT(0)$ cubical complexes are also commonly used in group theory, due to research into group actions on cubical complexes pioneered by Davis, Niblo, Roller and Sageev, among others \cite{art:Davis-Okun,art:Niblo-Reeves,art:Roller,art:Sageev}.

In both metric geometry and group theory, the $\CAT(0)$ cubical complexes under consideration are usually infinite, and sometimes even infinite-dimensional. However, finite $\CAT(0)$ cubical complexes also have interesting structure and combinatorics, and often arise from applications, including state complexes of robotic systems \cite{art:Ardila-robots,art:Abrams-Ghrist} and spaces of phylogenetic trees \cite{art:Billera-Holmes-Vogtmann}.

To date, there has been little research into the combinatorics of general $\CAT(0)$ cubical complexes. Two notable exceptions are the work of \citet{art:Hagen-hyperbolicity,art:Hagen-simplicial-boundary,ths:Hagen-thesis} and \citet*{art:Ardila-geodesics,art:Ardila-robots,art:Ardila-society}. Hagen introduced the contact graph and crossing graph of a $\CAT(0)$ cubical complex (the latter of which will appear as the underlying graph of the crossing complex defined below), and also the simplicial boundary of a $\CAT(0)$ cubical complex; however, Hagen's results often only become interesting for infinite complexes. \citeauthor{art:Ardila-geodesics} give a useful bijection between (rooted) $\CAT(0)$ cubical complexes and a class of posets with extra structure, based on Roller and Sageev's notion of a halfspace system \cite{art:Roller,art:Sageev}: this bijection will form the basis for much of the work in this paper.

The structure of this paper is as follows. In \cref{sec:definitions}, we define the basic notions, and summarise \citeauthor{art:Ardila-geodesics}'s theorem. In \cref{sec:crossing}, we define the crossing complex of a finite $\CAT(0)$ cubical complex, and make some initial observations about the connections between the $f$-vectors of the cubical complex and its crossing complex: this section includes our main theorem, that the sets of $f$-vectors of flag simplicial complexes and $\CAT(0)$ cubical complexes are the same, up to an invertible linear transformation. \Cref{sec:combining} discusses ways of making new $\CAT(0)$ cubical complexes by combining old ones, and how this relates to the structure of the crossing complex. In \cref{sec:hyperplanes}, we take a closer look at the hyperplanes in $\CAT(0)$ cubical complexes: the major result of this section is that hyperplanes in the cubical complex correspond to vertex links in the crossing complex. \Cref{sec:topology} gives some results of a topological flavour, using the Nerve Theorem to find the crossing complex living as a subspace of the cubical complex, up to homotopy equivalence. Finally, in \cref{sec:balanced}, we look at the special case of balanced complexes, and return to the $f$-vectors of our complexes.

While some results call upon lemmas in previous sections, most of the sections are fairly independent, and can be read in any order. The exceptions are \cref{sec:definitions,sec:crossing}, which introduce important definitions and concepts, so these sections should be read first.

\subsection*{Acknowledgments}

We are grateful to Steve Klee and (especially) Isabella Novik for providing innumerable helpful comments on drafts of this paper.

This research was partially supported by a graduate fellowship from NSF grant DMS-1664865.

\section{Definitions} \label{sec:definitions}

We will begin by defining the two main objects of study in this paper: flag simplicial complexes and $\CAT(0)$ cubical complexes. Readers who are familiar with simplicial complexes may wish to skip ahead to \cref{sec:CAT(0)-cubical}.

\subsection{Flag simplicial complexes}

\begin{figure}
\centering
\begin{tikzpicture}

\pgftransformscale{1.2}

\node [vertex, label=below:$1$] (v1) at (0,0) {};
\node [vertex, label=below:$2$] (v2) at (1.2,0.2) {};
\node [vertex, label=right:$3$] (v3) at (1,1.1) {};
\node [vertex, label=above:$4$] (v4) at (0.5,1.8) {};
\node [vertex, label=left:$5$] (v5) at (-0.1,0.9) {};
\node [vertex, label=below:$6$] (v6) at (2.2,0) {};
\node [vertex, label=above:$7$] (v7) at (2.3,1.3) {};

\draw [edge] (v5) -- (v1) -- (v2) -- (v3);
\filldraw [face] (v3.center) -- (v4.center) -- (v5.center) -- cycle;
\draw [edge] (v2) -- (v6);
\end{tikzpicture}
\caption{The geometric realisation of the flag simplicial complex $\Delta = \big\{ \emptyset, \{1\}, \{2\}, \dotsc, \{7\}, \{1,2\}, \{1,5\}, \{2,3\}, \{2,6\}, \{3,4\}, \{3,5\}, \{4,5\}, \{3,4,5\} \big\}$} \label{fig:eg-simplicial}
\end{figure}
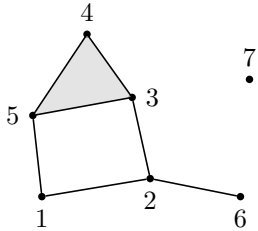

\begin{dfn}
A \emph{simplicial complex} consists of a finite set $V$ and a collection $\Delta$ of subsets of $V$, such that:
\begin{itemize}
	\item $\emptyset \in \Delta$,
	\item for each $v \in V$, the set $\{v\}$ is in $\Delta$, and
	\item if $\sigma \in \Delta$ and $\tau$ is any subset of $\sigma$, then $\tau \in \Delta$.
\end{itemize}
We usually abuse notation and just refer to $\Delta$ as the simplicial complex.

Elements of $V$ are called \emph{vertices}, and elements of $\Delta$ are called \emph{faces}. The \emph{dimension} of a face $\sigma$ is $\dim \sigma \coloneqq \abs{\sigma} - 1$, and the dimension of $\Delta$ is the maximum dimension of its faces. Faces of dimension $0$ are identified with the vertices, and faces of dimension $1$ are called \emph{edges}. Faces of $\Delta$ that are maximal under inclusion are called \emph{facets}. If all facets of $\Delta$ have the same dimension, then $\Delta$ is \emph{pure}.

The collection of all vertices and edges of $\Delta$ forms a graph, called the \emph{underlying graph of $\Delta$}; conversely, any graph (which will always mean a finite, simple, undirected graph) can be thought of as a $1$-dimensional (or smaller dimensional) simplicial complex.

The \emph{link} of a face $\sigma \in \Delta$ is the following set:
\begin{equation*}
\link_\Delta \sigma \coloneqq \{ \tau \in \Delta : \text{$\sigma \cup \tau \in \Delta$ and $\sigma \cap \tau = \emptyset$} \}.
\end{equation*}
We will sometimes just write $\link \sigma$ if the complex $\Delta$ is clear. The link is a simplicial complex. As a poset (ordered by inclusion), it is isomorphic to the \emph{open star} of $\sigma$:
\begin{equation*}
\openstar_\Delta \sigma \coloneqq \{ \rho \in \Delta : \rho \supseteq \sigma \},
\end{equation*}
which is not generally a simplicial complex. We also define the \emph{closed star}:
\begin{equation*}
\closedstar_\Delta \sigma \coloneqq \{ \rho \in \Delta : \rho \cup \sigma \in \Delta \},
\end{equation*}
which is the smallest simplicial complex containing all faces of the open star.

Any simplicial complex has a corresponding topological space, called its \emph{geometric realisation} $\norm{\Delta}$. If $\Delta$ has vertex set $\{1, \dotsc, n\}$, consider the standard basis $e_1, \dotsc, e_n$ in $\mathbb R^n$. For each face $\sigma \in \Delta$, take the subset
\begin{equation*}
\norm{\sigma} \coloneqq \conv \{e_i : i \in \sigma\},
\end{equation*}
i.e.\ the convex hull of the basis vectors corresponding to elements of $\sigma$. The geometric realisation of $\Delta$ is then the union of these convex sets. We will often not make any distinction between the combinatorial object $\Delta$ and the associated topological space $\norm \Delta$.

A \emph{missing face} of $\Delta$ is a subset $S$ of $V$ such that $S$ is not a face, but all proper subsets of $S$ are faces of $\Delta$. We say that $\Delta$ is \emph{flag} if all missing faces of $\Delta$ have cardinality $2$ (so they would be edges of $\Delta$ if they were indeed faces).

Given a graph $G$, there is an associated simplicial complex called the \emph{clique complex} of $G$: the vertices of the clique complex are the vertices of $G$, and the faces are the cliques, i.e.\ the collections $K$ of vertices such that every pair of vertices in $K$ is connected by an edge in $G$. A clique complex is always flag; in fact, flag simplicial complexes are exactly those complexes that are clique complexes of some graph (namely the underlying graph of the complex). Similarly, the \emph{anticlique complex} (or \emph{independence complex}) of $G$ is the simplicial complex whose vertices are the vertices of $G$, and whose faces are the independent sets, i.e.\ the collections of vertices with no pair connected by an edge. The anticlique complex of $G$ is the clique complex of the complement graph, whose edges are precisely the non-edges of $G$.

\end{dfn}

\subsection{$\CAT(0)$ cubical complexes} \label{sec:CAT(0)-cubical}

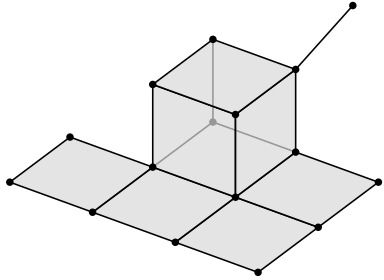
\begin{figure}
\centering
\begin{tikzpicture}[3D]

\draw [edge] (1,2,0) -- (2,2,0);
\draw [edge] (2,1,0) -- (2,2,0);
\draw [edge] (2,2,1) -- (2,2,0);

\node [vertex] at (2,2,0) {};

\filldraw [face] (0,0,0) -- (1,0,0) -- (1,1,0) -- (0,1,0) -- cycle;
\filldraw [face] (0,1,0) -- (1,1,0) -- (1,2,0) -- (0,2,0) -- cycle;
\filldraw [face] (0,2,0) -- (1,2,0) -- (1,3,0) -- (0,3,0) -- cycle;
\filldraw [face] (1,0,0) -- (2,0,0) -- (2,1,0) -- (1,1,0) -- cycle;
\filldraw [face] (1,1,0) -- (1,1,1) -- (1,2,1) -- (1,2,0) -- cycle;
\filldraw [face] (1,1,0) -- (1,1,1) -- (2,1,1) -- (2,1,0) -- cycle;
\filldraw [face] (1,1,1) -- (2,1,1) -- (2,2,1) -- (1,2,1) -- cycle;
\draw [edge] (2,1,1) -- (2.4,0.6,1.7);

\foreach \pos in {(0,0,0), (1,0,0), (2,0,0), (0,1,0), (1,1,0), (2,1,0), (0,2,0), (1,2,0), (0,3,0), (1,3,0), (1,1,1), (2,1,1), (1,2,1), (2,2,1), (2.4,0.6,1.7)}
	\node [vertex] at \pos {};

\end{tikzpicture}
\caption{The geometric realisation of a $3$-dimensional $\CAT(0)$ cubical complex} \label{fig:eg-cubical1}
\end{figure}

We next define a cubical complex. The definition is analogous to the definition of a simplicial complex --- morally, a cubical complex is ``like a simplicial complex but built from cubes instead of simplices''.

\begin{dfn}
A \emph{cubical complex} consists of a finite set $V$ and a collection $\Squelta$ of subsets of $V$, such that:
\begin{itemize}
	\item $\emptyset$ is \emph{not} in $\Squelta$,
	\item for each $v \in V$, the set $\{v\}$ is in $\Squelta$, 
	\item if $\sigma$ and $\tau$ are in $\Squelta$, then $\sigma \cap \tau$ is either empty or in $\Squelta$, and
	\item if $\sigma \in \Squelta$, then the collection of elements of $\Squelta$ that are contained in $\sigma$ is isomorphic as a poset (ordered by inclusion) to the poset of non-empty faces of a cube.
\end{itemize}
\end{dfn}

This definition might appear different from the definition of a simplicial complex. However, note that the condition that $\sigma \cap \tau \in \Delta$ is redundant for simplicial complexes, and the condition that a simplicial complex is closed under taking subsets is equivalent to the requirement that for any $\sigma \in \Delta$, the collection of elements of $\Delta$ that are contained in $\sigma$ is isomorphic to the face poset of a simplex. The requirement that $\emptyset \not \in \Squelta$ is a matter of preference --- we could instead have required that $\emptyset \in \Squelta$ and removed the word ``non-empty'' in the fourth condition. We chose the convention that $\emptyset \not \in \Squelta$ because the poset of non-empty faces of a cube is slightly simpler to describe than the poset of all faces, and because omitting the empty face makes some enumeration cleaner.

The dimension of $\sigma \in \Squelta$ is $\log_2 \abs{\sigma}$ --- the definition guarantees that $\abs{\sigma}$ is a power of $2$, so the dimension of $\sigma$ will always be an integer. Otherwise, all the basic terminology for simplicial complexes (faces, vertices, edges, dimension of $\Squelta$, facets, pureness, underlying graph) applies unchanged for cubical complexes.

We can also define links in cubical complexes, but we base the definition on the notion of a simplicial open star:
\begin{equation*}
\link_\Squelta \sigma \coloneqq \{ \rho \in \Squelta : \rho \supseteq \sigma \}.
\end{equation*}
Unlike for simplicial complexes, a cubical link is not generally a subcomplex. As a poset (ordered by inclusion), the link of any face of a cubical complex is isomorphic to a \emph{simplicial} complex. We also define closed stars, analogously to simplicial closed stars:
\begin{equation*}
\closedstar_\Squelta \sigma \coloneqq \{ \rho \in \Squelta : \text{$\rho \cup \sigma$ is contained in some face of $\Squelta$} \}.
\end{equation*}

Like simplicial complexes, cubical complexes also have geometric realisations, although the construction is less clean --- in general, the best we can get is a CW complex, rather than a union of convex sets in Euclidean space (although we will see that convex sets do work for $\CAT(0)$ complexes, defined below). For each $i$-dimensional face $\sigma \in \Squelta$, take $\norm \sigma$ to be an $i$-dimensional cube $[0,1]^i$, and identify the subfaces of $\sigma$ with faces of $\norm \sigma$. To construct $\norm \Squelta$, we topologically glue the cubes together along corresponding faces. However, even though this construction does not naturally live in $\mathbb R^n$ like for simplicial complexes, we can give a cubical complex the structure of a metric space, by endowing each cube $\norm \sigma$ with the metric of the unit cube in $\mathbb R^i$, and choosing the gluings to be isometries.

An arbitrary 
metric space $X$ is said to be $\CAT(0)$ if it has geodesics between any two points and it satisfies the ``thin triangle'' condition (see \citet{book:Bridson-Haefliger} for more details). However, we will not need the precise definition, thanks to the following theorem:
\begin{thm}[{\citet[Section~4]{book:Gromov}}] \label{thm:Gromov}
A cubical complex $\Squelta$ is $\CAT(0)$ if and only if it is simply connected and all vertex links are flag simplicial complexes. Moreover, any $\CAT(0)$ cubical complex is contractible.\footnote{\citet[Corollary~II]{art:Adiprasito-Benedetti-collapsibility} go even further, and show that $\CAT(0)$ cubical complexes are \emph{collapsible}.}
\end{thm}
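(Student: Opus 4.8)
The plan is to prove \cref{thm:Gromov} by the standard route that separates the \emph{local} geometry of $\norm{\Squelta}$ from its \emph{global} geometry. The local ingredient is Gromov's link criterion, which turns nonpositive curvature into a statement about vertex links; the global ingredient is the Cartan--Hadamard theorem, which upgrades ``locally $\CAT(0)$ and simply connected'' to ``$\CAT(0)$''. Finiteness of $\Squelta$ is convenient: with the piecewise-Euclidean metric, $\norm{\Squelta}$ is compact, hence a complete geodesic space, so the hypotheses of these classical theorems hold automatically.

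\textbf{Reduction to a combinatorial lemma.} First I would recall Gromov's link criterion for piecewise-Euclidean complexes: such a complex is locally $\CAT(0)$ if and only if, for every vertex $v$, the link of $v$ -- the simplicial complex $\link_\Squelta v$, metrized so that each $k$-simplex is the shape cut out on a small sphere about $v$ -- is $\CAT(1)$. For a cube complex this link is an \emph{all-right spherical complex}: every $k$-simplex is a copy of $S^k$ intersected with the positive orthant, so all of its edges have length $\pi/2$. The local half of the theorem is therefore equivalent to Gromov's combinatorial lemma: \emph{an all-right spherical complex is $\CAT(1)$ if and only if it is flag.}

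\textbf{Proving the combinatorial lemma.} For ``$\CAT(1)\Rightarrow$ flag'' I would prove the contrapositive by induction on dimension. If $L$ is not flag, take a minimal missing face $\{v_0,\dots,v_k\}$ with $k\geq 2$. If $k\geq 3$, then every triple among the $v_i$ spans a $2$-face, so $v_1,\dots,v_k$ are pairwise adjacent in $\link_L v_0$ but span no face there; hence $\link_L v_0$ is a non-flag all-right spherical complex of strictly smaller dimension, and since a vertex link failing $\CAT(1)$ forces the ambient complex to fail $\CAT(1)$, we conclude by induction. This reduces us to $k=2$: a $3$-cycle of $\pi/2$-edges with no $2$-cell spanning it, which is a closed local geodesic of length $3\pi/2<2\pi$ -- impossible in a $\CAT(1)$ space. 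For the converse, ``flag $\Rightarrow\CAT(1)$'', I would again induct on dimension: links of simplices in a flag complex are flag and lower-dimensional, so by induction all point-links of $L$ are $\CAT(1)$, i.e.\ $L$ is locally $\CAT(1)$; it remains to show $L$ has no closed geodesic of length $<2\pi$. I expect this last step to be the main obstacle -- it is the combinatorial heart of Gromov's lemma. The idea is that a too-short closed geodesic would pass through a bounded chain of simplices whose carriers, by the flag condition, must lie in a common simplex; but then the geodesic can be pushed into that simplex and shortened, a contradiction. (Detailed proofs appear in, e.g., \cite{book:Bridson-Haefliger}.)

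\textbf{Globalizing and contractibility.} Now assemble the pieces. If $\Squelta$ is simply connected and all vertex links are flag, then $\norm{\Squelta}$ is complete, simply connected, and (by the link criterion) locally $\CAT(0)$, hence $\CAT(0)$ by Cartan--Hadamard. Conversely, a $\CAT(0)$ space is uniquely geodesic and its geodesics vary continuously with their endpoints (by the convexity of the metric), so the map sending $(x,t)$ to the point a fraction $t$ of the way along the geodesic from $x$ to a fixed basepoint is a continuous contraction; thus $\norm{\Squelta}$ is contractible -- in particular simply connected -- and, being trivially locally $\CAT(0)$, has all vertex links flag by the link criterion together with the easy direction of the combinatorial lemma. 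The ``moreover'' clause is exactly the contraction just constructed, so nothing more is needed.
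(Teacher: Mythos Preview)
The paper does not prove this theorem at all: it is quoted as a black-box result with a citation to Gromov, and the paper immediately moves on to use it. So there is no ``paper's own proof'' to compare against.

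Your outline is the standard argument (as in \cite{book:Bridson-Haefliger}) and is structurally sound: reduce to the all-right spherical link condition via Gromov's link criterion, prove the combinatorial lemma ``flag $\Leftrightarrow$ $\CAT(1)$'' for such links, globalize with Cartan--Hadamard, and deduce contractibility from unique geodesics varying continuously. The one substantive step you do not actually carry out is the ``flag $\Rightarrow$ no closed geodesic of length $<2\pi$'' direction; you correctly flag this as the hard part and cite the literature, which is appropriate for a result the paper itself treats as background. One small wording issue: in your converse paragraph, ``being trivially locally $\CAT(0)$'' is fine, but the inference to ``links are flag'' uses the \emph{hard} direction of the link criterion (locally $\CAT(0)$ $\Rightarrow$ links $\CAT(1)$) together with the \emph{easy} direction of the combinatorial lemma ($\CAT(1)$ $\Rightarrow$ flag), not the other way round as your parenthetical suggests.
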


This theorem gives the first inkling of the connections between flag simplicial complexes and $\CAT(0)$ cubical complexes.

One particularly useful feature of cubical complexes is their \emph{hyperplanes}. Given any cubical complex $\Squelta$, for each cube $[0,1]^i$ in $\Squelta$, a subset $[0,1] \times \dotsb \times \{\frac{1}{2}\} \times \dotsb \times [0,1]$ is called a \emph{midplane} of the cube. Two midplanes are \emph{adjacent} if their intersection is a midplane of another cube; after taking the transitive closure of this relation, the union of all midplanes in any equivalence class is called a \emph{hyperplane} of $\Squelta$. The midplanes give the hyperplane its own cubical complex structure. See \cref{fig:eg-cubical1-hyperplanes} for an example.

One reason why hyperplanes are especially nice in $\CAT(0)$ cubical complexes is the following lemma:
\begin{lma}[{\citet[Lemma~2.7]{art:Niblo-Reeves}}] \label{thm:hyperplanes-divide}
In a $\CAT(0)$ complex $\Squelta$, each hyperplane divides $\Squelta$ into two sides. 
\end{lma}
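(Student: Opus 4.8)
The plan is to derive the statement formally from two ingredients: the contractibility of $\CAT(0)$ cubical complexes (\cref{thm:Gromov}) and a structural fact about a single hyperplane, proved by a local analysis of vertex links. Write $X \coloneqq \norm{\Squelta}$. First observe that the hyperplane $H$ is connected: it is the union of one equivalence class of midplanes under the transitive closure of adjacency, and adjacent midplanes intersect (in a midplane, in particular nonempty), so the union of the class is connected. Observe also that in each cube $C \cong [0,1]^{\dim C}$ whose interior meets $H$ --- call these the \emph{carrier cubes} --- the hyperplane restricts to a single coordinate midplane, so $C$ splits as $(C \cap H) \times [0,1]$ along a distinguished perpendicular coordinate and has a well-defined positive and negative side of $C \cap H$.

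\emph{Geometric input.} I would first establish the structure theorem for hyperplanes in $\CAT(0)$ cubical complexes: $H$ is embedded, two-sided, and does not self-osculate, and therefore admits an open neighbourhood $B$ in $X$ with $B \cong H \times (-\epsilon,\epsilon)$, under which $H$ corresponds to $H \times \{0\}$. Concretely, one must check that whenever two carrier cubes share a face, their perpendicular coordinates and their positive/negative sides agree along that face; a discrepancy would force a self-intersection or self-osculation of $H$ at the vertex of that face, and this is ruled out by inspecting the link of that vertex --- flagness of the link (\cref{thm:Gromov}) and, for the matching of sides, simple connectivity of $X$ (via the standard fact that the parity of transverse intersections of a loop with a codimension-one subcomplex is a homotopy invariant, so every loop meets $H$ an even number of times) are exactly what forbid the offending configurations. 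I expect \textbf{this structure theorem to be the main obstacle}: the link bookkeeping is the only place the $\CAT(0)$ hypothesis does real work, and everything downstream is soft.

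\emph{Homological conclusion.} With the collar $B \cong H \times (-\epsilon,\epsilon)$ in hand, the rest is formal. Setting $A \coloneqq X \setminus H$, one has open sets $A, B$ with $A \cup B = X$ and
\begin{equation*}
A \cap B \;=\; B \setminus H \;\cong\; H \times \bigl( (-\epsilon,0) \sqcup (0,\epsilon) \bigr),
\end{equation*}
which has exactly two components (as $H$ is connected), each homotopy equivalent to $H$, while $B \simeq H$ is connected and nonempty. Feeding this into the reduced Mayer--Vietoris sequence,
\begin{equation*}
H_1(X) \longrightarrow \tilde H_0(A \cap B) \longrightarrow \tilde H_0(A) \oplus \tilde H_0(B) \longrightarrow \tilde H_0(X) \longrightarrow 0,
\end{equation*}
and using that $X$ is contractible by \cref{thm:Gromov} (so the outer groups vanish) together with $\tilde H_0(B) = 0$ and $\tilde H_0(A \cap B) \cong \mathbb Z$, one gets $\tilde H_0(A) \cong \mathbb Z$; that is, $X \setminus H$ has exactly two components, the two sides of $H$. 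A homology-free variant is also available: since $\Squelta$ is finite, the closed carrier $N(H)$ is a finite union of closed cubes, so no component of the open set $X \setminus N(H)$ is clopen in the connected space $X$; hence every point of $X \setminus H$ can be joined within $X \setminus H$ to $N(H) \setminus H$, which the structure theorem makes into exactly two components, so there are at most two sides, and the parity-of-crossings argument applied to a short segment across a midplane of $H$ shows the two local sides lie in different ones, so exactly two. (The conclusion is also immediate from the halfspace-system description of $\CAT(0)$ cubical complexes, where the two sides of a hyperplane are just the two complementary halfspaces.)
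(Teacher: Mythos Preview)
The paper does not give its own proof of this lemma: it is stated with a citation to \citet[Lemma~2.7]{art:Niblo-Reeves} and used as a black box. So there is nothing in the paper to compare your argument against directly.

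Your outline is correct and is one of the standard routes. The structure theorem you isolate --- that a hyperplane in a $\CAT(0)$ cubical complex is embedded, two-sided, and carries a bicollar --- is indeed where all the $\CAT(0)$ content lives, and your sketch of why (flag links rule out self-intersection and self-osculation; simple connectivity forces coherent sides via the parity-of-crossings invariant) is the right diagnosis. Once the collar is in hand, your Mayer--Vietoris computation is clean and correct, and your homology-free variant also works.

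Two remarks. First, the Mayer--Vietoris step is slight overkill: the parity-of-crossings argument you invoke to orient the collar already gives, for each vertex $v$, a well-defined value in $\{0,1\}$ (the parity of the number of edges dual to $H$ on any edge path from a fixed basepoint to $v$, well-defined by simple connectivity), and the two preimages are exactly the two sides --- this is closer to how Sageev and Niblo--Reeves actually argue, and it bypasses the need for the full bicollar. Second, your closing parenthetical is apt: in the framework the present paper adopts (\cref{thm:Ardila-et-al} and the embedding into $[0,1]^{\abs P}$), the two sides of the hyperplane for $x \in P$ are visibly $\{x_\text{th coordinate} < \tfrac12\}$ and $\{x_\text{th coordinate} > \tfrac12\}$, so within this paper the lemma could also be read off from \cref{thm:hyperplanes-cube-embedding} --- though of course the paper \emph{uses} \cref{thm:hyperplanes-divide} to set up that framework, so this is not an independent proof here.
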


\begin{figure}
\centering
\begin{tikzpicture}[x={(0.8cm,0.6cm)}, y={(-1cm,0.4cm)}, z={(0cm,1.1cm)}]

\pgftransformscale{1.2}




\filldraw [face] (0,0,0) -- (1,0,0) -- (1,1,0) -- (0,1,0) -- cycle;
\filldraw [face] (0,1,0) -- (1,1,0) -- (1,2,0) -- (0,2,0) -- cycle;
\filldraw [face] (0,2,0) -- (1,2,0) -- (1,3,0) -- (0,3,0) -- cycle;
\filldraw [face] (1,0,0) -- (2,0,0) -- (2,1,0) -- (1,1,0) -- cycle;
\filldraw [face] (1,1,0) -- (1,1,1) -- (1,2,1) -- (1,2,0) -- cycle;
\filldraw [face] (1,1,0) -- (1,1,1) -- (2,1,1) -- (2,1,0) -- cycle;
\filldraw [face] (1,1,1) -- (2,1,1) -- (2,2,1) -- (1,2,1) -- cycle;

\draw [hyperpoint] (2,1,1) -- (2.4,0.6,1.7) node [pos=0.5, label=below right:$7$] {};

\draw [dashed] (0.5,0,0) -- (0.5,3,0);
\draw [dashed] (1.5,0,0) -- (1.5,1,0) -- (1.5,1,1) -- (1.5,2,1);
\draw [densely dotted] (0,0.5,0) -- (2,0.5,0);
\draw [densely dotted] (0,1.5,0) -- (1,1.5,0) -- (1,1.5,1) -- (2,1.5,1);
\draw [densely dotted] (0,2.5,0) -- (1,2.5,0);
\draw [dash dot dot] (1,2,0.5) -- (1,1,0.5) -- (2,1,0.5);


\node [vertex, inner sep=1.4pt, label=below:$v_0$] at (0,0,0) {};

\node [coordinate, label=below left:$1$] at (0,0.5,0) {};
\node [coordinate, label=below right:$2$] at (0.5,0,0) {};
\node [coordinate, label=below left:$3$] at (0,1.5,0) {};
\node [coordinate, label=right:$4$] at (2,1,0.5) {};
\node [coordinate, label=below right:$5$] at (1.5,0,0) {};
\node [coordinate, label=below left:$6$] at (0,2.5,0) {};

\end{tikzpicture}
\caption{The $\CAT(0)$ cubical complex from \cref{fig:eg-cubical1} with its hyperplanes and a root vertex shown} \label{fig:eg-cubical1-hyperplanes}
\end{figure}
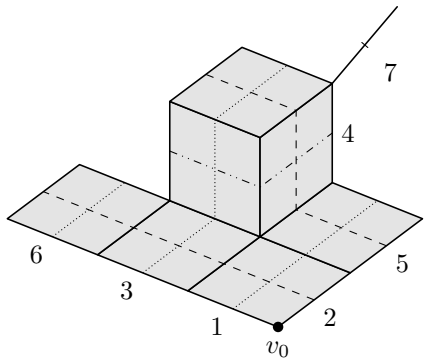

\subsection{Posets with inconsistent pairs}

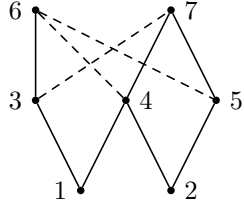
\begin{figure}
\centering
\begin{tikzpicture}

\pgftransformscale{1.2}

\node [vertex, label=left:$1$] (v1) at (0.5,0) {};
\node [vertex, label=right:$2$] (v2) at (1.5,0) {};
\node [vertex, label=left:$3$] (v3) at (0,1) {};
\node [vertex, label=right:$4$] (v4) at (1,1) {};
\node [vertex, label=right:$5$] (v5) at (2,1) {};
\node [vertex, label=left:$6$] (v6) at (0,2) {};
\node [vertex, label=right:$7$] (v7) at (1.5,2) {};

\draw [edge] (v1) -- (v3) -- (v6);
\draw [edge] (v1) -- (v4) -- (v2) -- (v5) -- (v7) -- (v4);

\draw [edge, dashed] (v4) -- (v6) -- (v5) (v3) -- (v7);

\end{tikzpicture}
\caption{The Hasse diagram of a poset with inconsistent pairs} \label{fig:eg-PIP}
\end{figure}

\citet{art:Ardila-geodesics} gave another, more combinatorial description of $\CAT(0)$ cubical complexes which will be much more useful to us. But first, we need some more definitions.

\begin{dfn}
A \emph{poset with inconsistent pairs} (or a PIP for short) is a triple $(P, {\leq}, {\incons})$ with the following properties:
\begin{itemize}
	\item $(P, {\leq})$ is a finite partially ordered set (poset), and
	\item $\incons$ is a relation on $P$ such that:
	\begin{itemize}
		\item $a \incons a$ is false for all $a$ (that is, $\incons$ is antireflexive),
		\item $a \incons b$ implies $b \incons a$ (that is, $\incons$ is symmetric), and
		\item if $a \incons b$, $a \leq a'$ and $b \leq b'$, then $a' \incons b'$ (that is, $\incons$ is inherited upwards).
	\end{itemize}
\end{itemize}
\end{dfn}

If $a \incons b$, then we say $a$ and $b$ form an \emph{inconsistent pair}. A subset $S \subseteq P$ is \emph{consistent} if it contains no inconsistent pairs.

We can draw the Hasse diagram of a PIP by drawing the usual Hasse diagram of the poset $(P, {\leq})$ with solid lines, and adding dashed lines for each \emph{minimal} inconsistent pair, that is, each pair $c \incons d$ with no other pair $c' \incons d'$ where $c \geq c'$ and $d \geq d'$. For example, the PIP shown in \cref{fig:eg-PIP} has four inconsistent pairs, namely $3 \incons 7$, $4 \incons 6$, $5 \incons 6$ and $6 \incons 7$.

A definition is no good without examples, and fortunately there are two large sources of them. Firstly, if $P$ is any poset, we can think of it as a PIP that has an order relation but happens to have no inconsistent pairs. Secondly, on the other extreme, we can construct PIPs with inconsistent pairs but no order relation: if $G$ is any (finite, simple, undirected) graph, we can think of it as a PIP whose underlying set is the set of vertices of $G$, the inconsistent pairs are the edges of $G$, and there are no order relations (except the trivial relation $a \leq a$ for each $a$).

We will use a lot of standard poset terminology when talking about PIPs. Given a subset $S \subseteq P$, an element $x \in S$ is said to be \emph{maximal} in $S$ (respectively \emph{minimal} in $S$) if there is no other element $y \in S$ with $x \leq y$ (resp.\ $x \geq y$). The set of maximal elements of $S$ is denoted $\max S$, and the set of minimal elements is $\min S$. Every element of $P$ is greater than or equal to some minimal element, and less than or equal to some maximal one.

A \emph{downset} (or \emph{order ideal}) is a subset $I \subseteq P$ with the property that if $a \in I$ and $a' \leq a$ then $a' \in I$. The \emph{downset generated by $S \subseteq P$} is the set
\begin{equation*}
\downset S \coloneqq \{x \in P : \text{$x \leq s$ for some $s \in S$} \}.
\end{equation*}
An \emph{upset} (or \emph{order filter}) is defined similarly with the inequalities reversed, and the \emph{upset generated by $S \subseteq P$} is denoted $\upset S$.

Two elements $a$ and $b$ of $P$ are said to be \emph{comparable} if either $a \leq b$ or $b \leq a$. A \emph{chain} is a subset $C \subseteq P$ such that every two elements of $C$ are comparable; an \emph{antichain} is a subset $A \subseteq P$ where no two elements are comparable.

We will often discuss consistent downsets and consistent antichains. These two objects are related by the following lemma:
\begin{lma} \label{thm:antichains-downsets}
The map $I \mapsto \max I$ is a bijection from the set of consistent downsets of $P$ to the set of consistent antichains, with inverse map given by $A \mapsto \downset A$.
\end{lma}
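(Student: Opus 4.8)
The plan is to show that the two maps $I \mapsto \max I$ and $A \mapsto \downset A$ are well-defined on the stated sets and are mutually inverse. There are four things to check: (1) if $I$ is a consistent downset, then $\max I$ is a consistent antichain; (2) if $A$ is a consistent antichain, then $\downset A$ is a consistent downset; (3) $\downset(\max I) = I$ for every consistent downset $I$; and (4) $\max(\downset A) = A$ for every consistent antichain $A$. Checks (1) and (2) handle well-definedness, and (3)--(4) establish the bijection.

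For (1): $\max I$ is an antichain by definition of ``maximal'', and it is consistent because $\max I \subseteq I$ and $I$ is consistent (a subset of a consistent set is consistent). For (2): consistency of $\downset A$ is the only real content. Suppose $x, y \in \downset A$ with $x \incons y$; pick $a, b \in A$ with $x \leq a$ and $y \leq b$. Since $\incons$ is inherited upwards, $x \incons y$ forces $a \incons b$, contradicting consistency of $A$. That $\downset A$ is a downset is immediate from the definition of $\downset$. For (3): the inclusion $\downset(\max I) \subseteq I$ holds because $\max I \subseteq I$ and $I$ is a downset; conversely, every element of $P$ (and in particular every element of the finite poset $I$) lies below some maximal element of $I$, which is exactly the statement $I \subseteq \downset(\max I)$. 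For (4): $A \subseteq \downset A$ trivially, and each element of $A$ is maximal in $\downset A$ since $A$ is an antichain and nothing in $\downset A$ lies strictly above an element of $A$ (anything above an element of $A$ and inside $\downset A$ would be $\leq$ some element of $A$, forcing two comparable elements of the antichain $A$, hence equality); conversely a maximal element of $\downset A$ is $\leq$ some $a \in A$, and by maximality it equals $a$, so $\max(\downset A) \subseteq A$.

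None of these steps is a genuine obstacle --- this is the standard antichain/order-ideal correspondence, with ``consistent'' carried along for the ride. The only place where the PIP structure (as opposed to pure poset theory) is used at all is in step (2), where the upward inheritance of $\incons$ is precisely what makes the downset of a consistent antichain consistent; everywhere else the consistency hypothesis is preserved simply because consistency passes to subsets. I would present (1)--(4) in that order as four short paragraphs, noting that finiteness of $P$ is what guarantees maximal elements exist above every element.
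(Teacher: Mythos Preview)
Your proposal is correct and takes essentially the same approach as the paper: both observe that the poset-theoretic content is the standard downset/antichain bijection, with the consistency assertions following from ``subsets of consistent sets are consistent'' for $\max I$ and from upward inheritance of $\incons$ for $\downset A$. Your version is simply more detailed than the paper's two-sentence proof.
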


\begin{proof}
Apart from the word ``consistent'', this is a standard result about finite posets. 
Since $I$ is consistent and $\max I$ is a subset of $I$, $\max I$ is consistent; the fact that $\downset A$ is consistent follows from the upward-inherited property of inconsistent pairs.
\end{proof}

At this point, the reader may be starting to wonder what PIPs have to do with cubical complexes. The answer lies in the following construction due to \citet{art:Ardila-geodesics}, culminating in \cref{thm:Ardila-et-al} (see also \citet{art:Ardila-robots,art:Ardila-society}).

\begin{dfn} \label{dfn:Ardila-bijection}
Given a PIP $P$, we define an associated cubical complex $\Squelta_P$ as follows. The vertex set of $\Squelta_P$ is the set of consistent downsets of $P$. There is a face of $\Squelta_P$ for each pair $(I,M)$, where $I$ is a consistent downset and $M$ is some subset of $\max I$: the face is denoted $C(I,M)$, and it contains vertices
\begin{equation*}
C(I,M) \coloneqq \{ I \setminus N : N \subseteq M \}.
\end{equation*}
The result is a $\CAT(0)$ cubical complex (though this is not obvious). The face $C(I,M)$ contains $2^{\abs M}$ vertices, one for each subset $N \subseteq M$, so it is an $\abs M$-dimensional cube. Note that $\emptyset$ is always a consistent downset of any PIP, so a cubical complex constructed in this way has a natural choice of distinguished vertex, the one corresponding to $\emptyset$.

There is a natural embedding of $\Squelta_P$ into the unit cube $[0,1]^{\abs P}$: the vertices of $\Squelta_P$ are associated with some subsets of $P$, whereas the vertices of $[0,1]^{\abs P}$ correspond to the set of all subsets of $P$. The faces of $\Squelta_P$ agree with the faces of $[0,1]^{\abs P}$, so $\Squelta_P$ is the induced subcomplex of $[0,1]^{\abs P}$ generated by these vertices. Thus $\Squelta_P$ can be realised as a union of convex sets in $\mathbb R^n$, faces of a cube.
\end{dfn}

\begin{dfn}
A \emph{rooted $\CAT(0)$ cubical complex} is a $\CAT(0)$ cubical complex $\Squelta$ together with a choice of distinguished vertex $v_0$.

If we are given a rooted $\CAT(0)$ cubical complex, there is an associated PIP $P_\Squelta$. The underlying set of $P_\Squelta$ is the set of hyperplanes of $\Squelta$. If $H$ is a hyperplane, then the complement of $H$ has two components, by \cref{thm:hyperplanes-divide}: let $H^-$ be the component that does not contain the root vertex $v_0$. We declare that $H_1 \leq H_2$ in $P_\Squelta$ whenever $H_1^- \supseteq H_2^-$, and $H_1 \incons H_2$ when $H_1^-$ and $H_2^-$ are disjoint. Intuitively, ``$H_1 \leq H_2$'' means that hyperplane $H_2$ is entirely beyond $H_1$, to an observer standing at the root vertex, and ``$H_1 \incons H_2$'' means that this observer can see both hyperplanes without either obscuring any part of the other --- see \cref{fig:P_Squelta}.

\begin{figure}
\centering
\begin{subfigure}{0.3\textwidth}
\centering
\begin{tikzpicture}

\pgftransformscale{0.5}

\filldraw [face] plot [smooth cycle, tension=0.7] coordinates {(2,0) (4,2) (5,4) (4,5) (2,4) (0,1)};
\path [clip] plot [smooth cycle, tension=0.7] coordinates {(2,0) (4,2) (5,4) (4,5) (2,4) (0,1)};

\draw [hyperplane] (1,4) .. controls (2.3,2.3) .. (5,1) node [pos=0.5] {\small $H_1$};

\draw [hyperplane] (3,5) .. controls (3.7,3.6) .. (5,3) node [pos=0.5] {\small $H_2$};

\node [vertex, inner sep=1.4pt, label=right:$v_0$] at (1.5,1) {};

\end{tikzpicture}
\caption{$H_1 \leq H_2$}
\end{subfigure} \quad 
\begin{subfigure}{0.3\textwidth}
\centering
\begin{tikzpicture}

\pgftransformscale{0.5}

\filldraw [face] plot [smooth cycle, tension=0.7] coordinates {(2,0) (4,0) (6,3) (5,4) (3,3) (1,4) (0,2)};
\path [clip] plot [smooth cycle, tension=0.7] coordinates {(2,0) (4,0) (6,3) (5,4) (3,3) (1,4) (0,2)};

\draw [hyperplane] (0,1) .. controls (2,2.3) .. (2.5,4) node [pos=0.5] {\small $H_1$};

\draw [hyperplane] (3,5) .. controls (4,2.2) .. (6,1) node [pos=0.5] {\small $H_2$};

\node [vertex, inner sep=1.4pt, label=right:$v_0$] at (3,1) {};

\end{tikzpicture}
\caption{$H_1 \incons H_2$}
\end{subfigure} \quad 
\begin{subfigure}{0.3\textwidth}
\centering
\begin{tikzpicture}

\pgftransformscale{0.5}

\filldraw [face] plot [smooth cycle, tension=0.7] coordinates {(2,0) (5,2) (4,4) (2,5) (0,3) (0,1)};
\path [clip] plot [smooth cycle, tension=0.7] coordinates {(2,0) (5,2) (4,4) (2,5) (0,3) (0,1)};

\draw [hyperplane] (-1,3) .. controls (2,3) .. (5,4) node [pos=0.35] {\small $H_1$};

\draw [hyperplane] (1,6) .. controls (2,3) .. (5,1) node [pos=0.75] {\small $H_2$};

\node [vertex, inner sep=1.4pt, label=right:$v_0$] at (1.5,1.5) {};

\end{tikzpicture}
\caption{$H_1$ and $H_2$ consistent and incomparable}
\end{subfigure}
\caption{How to turn hyperplanes into a PIP} \label{fig:P_Squelta}
\end{figure}
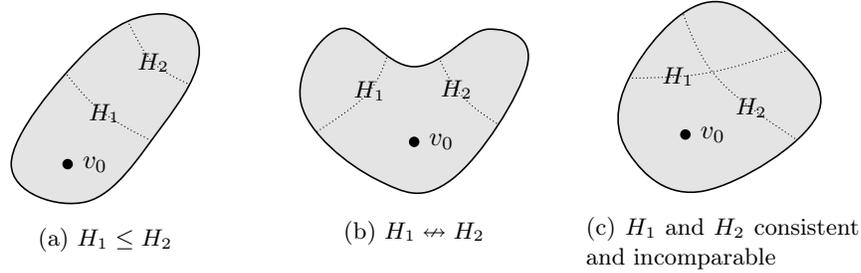

\end{dfn}

These constructions result in a theorem which forms the foundation for much of the rest of this paper:

\begin{thm}[{\citet[Theorem~2.5]{art:Ardila-geodesics}}] \label{thm:Ardila-et-al}
The map $P \mapsto \Squelta_P$ is a bijection from the set of PIPs to the set of rooted $\CAT(0)$ cubical complexes, with inverse given by $\Squelta \mapsto P_\Squelta$.
\end{thm}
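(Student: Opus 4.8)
The plan is to unpack the stated bijection into three sub-claims and attack them in increasing order of difficulty: (i) for every PIP $P$, the complex $\Squelta_P$ of \cref{dfn:Ardila-bijection} is indeed a $\CAT(0)$ cubical complex; (ii) for every rooted $\CAT(0)$ cubical complex $\Squelta$, the data $P_\Squelta$ is indeed a PIP; and (iii) $P\mapsto\Squelta_P$ and $\Squelta\mapsto P_\Squelta$ are mutually inverse. Claims (i) and (ii), and one of the two composites in (iii), are essentially bookkeeping; the load-bearing part is the other composite, which is where the geometry of non-positive curvature is actually used.

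For (i), first verify the four cubical-complex axioms directly: every face $C(I,M)$ contains the vertex $I$ (take $N=\emptyset$), so $\emptyset\notin\Squelta_P$; $C(I,\emptyset)=\{I\}$ exhibits each consistent downset as a vertex; the faces contained in $C(I,M)$ are exactly the $C(I\setminus N,\,M_0)$ with $M_0\subseteq M$ and $N\subseteq M\setminus M_0$, and $(M_0,N)$ is a faithful parametrization of the non-empty faces of the cube $[0,1]^M$, so the cube-poset axiom holds; and $C(I_1,M_1)\cap C(I_2,M_2)$ is empty or a face, using that the family of consistent downsets is closed under intersection. To see $\Squelta_P$ is $\CAT(0)$ we apply \cref{thm:Gromov}, so we need simple connectivity and flag vertex links. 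The link at a vertex $I$ has one vertex for each \emph{togglable} element at $I$ --- each $m\in\max I$ (``delete $m$'') and each $m\notin I$ for which $I\cup\{m\}$ is still a consistent downset (``add $m$'') --- and a set of togglable elements can be toggled simultaneously exactly when every pair of them can; hence $\link I$ equals the clique complex of its $1$-skeleton, i.e.\ it is flag. Simple connectivity follows from contractibility: ordering $P$ by a linear extension and letting $\Squelta_k$ be the full subcomplex on consistent downsets contained in the first $k$ elements, one checks that $\Squelta_k$ is obtained from $\Squelta_{k-1}$ by attaching cubes along a contractible subcomplex, so $\Squelta_P$ deformation retracts onto the root vertex $\emptyset$.

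For (ii): on $P_\Squelta$, antireflexivity and symmetry of $\incons$ are immediate ($H^-$ is a non-empty set, so $H^-\cap H^-\ne\emptyset$; disjointness is symmetric), and upward inheritance holds because shrinking $H_1^-$ and $H_2^-$ cannot make disjoint sets meet --- here \cref{thm:hyperplanes-divide} is what makes $H^-$ well-defined. For the easy composite in (iii), consider $P\mapsto\Squelta_P\mapsto P_{\Squelta_P}$. The hyperplanes of $\Squelta_P$ are indexed by $P$: to $p\in P$ corresponds the hyperplane $H_p$ dual to the edges $C(I,\{p\})$ with $p\in\max I$, at least one of which exists because $\downset\{p\}$ is a consistent downset (an inconsistent pair inside it would force $p\incons p$ by upward inheritance). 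The side of $H_p$ not containing the root is exactly $\{I : p\in I\}$, since the root $\emptyset$ lies on the other side and crossing $H_p$ is the only way to change whether $p\in I$. Hence $H_p\le H_q$ in $P_{\Squelta_P}$ iff $q\in I$ implies $p\in I$ for every consistent downset $I$, iff $p\le q$ (test on $I=\downset\{q\}$); and $H_p\incons H_q$ iff no consistent downset contains both $p$ and $q$, iff $\downset\{p,q\}$ is inconsistent, iff $p\incons q$. So $p\mapsto H_p$ is an isomorphism of PIPs.

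The substantive composite is $\Squelta\mapsto P_\Squelta\mapsto\Squelta_{P_\Squelta}$, which we must identify with $\Squelta$. Send a vertex $v$ of $\Squelta$ to $I_v:=\{H\in P_\Squelta : v\in H^-\}$, the set of hyperplanes separating $v$ from the root $v_0$. By \cref{thm:hyperplanes-divide}, $I_v$ is a downset (if $H\in I_v$ and $K\le H$, then $v\in H^-\subseteq K^-$, so $K\in I_v$) and consistent (two members of $I_v$ both have $v$ on their root-free sides, so those sides meet). The map $v\mapsto I_v$ is injective, since distinct vertices are separated by some hyperplane, namely one crossed by an edge-path of minimal length between them. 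The crux is surjectivity onto consistent downsets: given a consistent downset $I$, one builds a vertex $v$ with $I_v=I$ by crossing the hyperplanes of $I$ one at a time along a linear extension, the key lemma being that whenever one has reached a vertex $w$ with $I_w=J\subsetneq I$, some minimal element $H$ of $I\setminus J$ is dual to an edge at $w$ whose far endpoint $w'$ has $I_{w'}=J\cup\{H\}$. Finally, one checks that $v\mapsto I_v$ transports the cube structure: edges of $\Squelta$ at $v$ correspond to toggling single elements of $I_v$, and a family of such edges bounds a cube of $\Squelta$ iff the corresponding hyperplanes pairwise cross --- which for $\Squelta$ is again flagness of vertex links, \cref{thm:Gromov}. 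I expect this surjectivity lemma to be the main obstacle: it asserts exactly that the hyperplane orientations realized by genuine vertices of a $\CAT(0)$ cubical complex are precisely the consistent downsets of $P_\Squelta$, and proving it needs real use of the non-positive-curvature geometry --- Helly-type intersection properties of the halfspaces and repeated appeals to \cref{thm:hyperplanes-divide} --- rather than the formal manipulations that suffice elsewhere.
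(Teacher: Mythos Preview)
The paper does not prove this theorem: it is quoted from \citet[Theorem~2.5]{art:Ardila-geodesics} and used as a black box, with no proof given in the present paper. So there is nothing to compare your proposal against here.

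That said, your outline is a reasonable sketch of how the Ardila--Owen--Sullivant argument actually goes, and you have correctly located the load-bearing step: the surjectivity of $v\mapsto I_v$ onto the consistent downsets of $P_\Squelta$ is where the $\CAT(0)$ hypothesis does real work, and your sketch of it (building $v$ by crossing hyperplanes one at a time along a linear extension, needing at each stage an edge dual to some minimal element of $I\setminus J$) is the right shape but not yet a proof. The specific gap is the ``key lemma'' you name: that whenever $I_w=J\subsetneq I$, some minimal $H\in I\setminus J$ is dual to an edge at $w$. This requires showing that the halfspace $H^-$ comes arbitrarily close to $w$ in the combinatorial sense, which in turn uses a Helly-type argument on halfspaces (pairwise-intersecting halfspaces have a common vertex) together with \cref{thm:hyperplanes-divide}; you gesture at this but do not supply it. Your contractibility argument in (i) is also thin: the claim that $\Squelta_k$ is obtained from $\Squelta_{k-1}$ by attaching cubes along a contractible subcomplex needs justification (why is the attaching locus contractible?). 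Everything else --- the cubical-complex axioms, flagness of links, the PIP axioms for $P_\Squelta$, and the easy composite --- is correct as written.
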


For example, if $P$ is the PIP in \cref{fig:eg-PIP}, then $\Squelta_P$ is the cubical complex in \cref{fig:eg-cubical1,fig:eg-cubical1-hyperplanes}, as illustrated in \cref{fig:eg-bijection}. Several more examples are shown in the first two columns of \cref{fig:examples}.

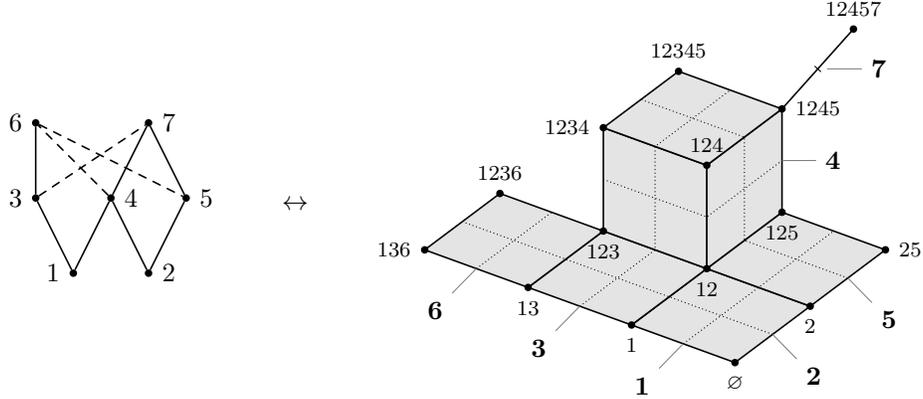
\begin{figure}
\centering
\begin{equation*}
\begin{tikzpicture}

\pgftransformscale{1}

\node [vertex, label=left:$1$] (v1) at (0.5,0) {};
\node [vertex, label=right:$2$] (v2) at (1.5,0) {};
\node [vertex, label=left:$3$] (v3) at (0,1) {};
\node [vertex, label=right:$4$] (v4) at (1,1) {};
\node [vertex, label=right:$5$] (v5) at (2,1) {};
\node [vertex, label=left:$6$] (v6) at (0,2) {};
\node [vertex, label=right:$7$] (v7) at (1.5,2) {};

\draw [edge] (v1) -- (v3) -- (v6);
\draw [edge] (v1) -- (v4) -- (v2) -- (v5) -- (v7) -- (v4);

\draw [edge, dashed] (v4) -- (v6) -- (v5) (v3) -- (v7);

\end{tikzpicture} \qquad \leftrightarrow \qquad \begin{tikzpicture}[3D]

\pgftransformscale{1.25}



\filldraw [face] (0,0,0) -- (1,0,0) -- (1,1,0) -- (0,1,0) -- cycle;
\filldraw [face] (0,1,0) -- (1,1,0) -- (1,2,0) -- (0,2,0) -- cycle;
\filldraw [face] (0,2,0) -- (1,2,0) -- (1,3,0) -- (0,3,0) -- cycle;
\filldraw [face] (1,0,0) -- (2,0,0) -- (2,1,0) -- (1,1,0) -- cycle;
\filldraw [face] (1,1,0) -- (1,1,1) -- (1,2,1) -- (1,2,0) -- cycle;
\filldraw [face] (1,1,0) -- (1,1,1) -- (2,1,1) -- (2,1,0) -- cycle;
\filldraw [face] (1,1,1) -- (2,1,1) -- (2,2,1) -- (1,2,1) -- cycle;

\draw [hyperpoint] (2,1,1) -- (2.4,0.6,1.7) node [pos=0.5, pin=right:$\mathbf 7$] {};

\draw [hyperplane] (0.5,0,0) -- (0.5,3,0);
\draw [hyperplane] (1.5,0,0) -- (1.5,1,0) -- (1.5,1,1) -- (1.5,2,1);
\draw [hyperplane] (0,0.5,0) -- (2,0.5,0);
\draw [hyperplane] (0,1.5,0) -- (1,1.5,0) -- (1,1.5,1) -- (2,1.5,1);
\draw [hyperplane] (0,2.5,0) -- (1,2.5,0);
\draw [hyperplane] (1,2,0.5) -- (1,1,0.5) -- (2,1,0.5);

\node [coordinate, pin=below left:$\mathbf 1$] at (0,0.5,0) {};
\node [coordinate, pin=below right:$\mathbf 2$] at (0.5,0,0) {};
\node [coordinate, pin=below left:$\mathbf 3$] at (0,1.5,0) {};
\node [coordinate, pin=right:$\mathbf 4$] at (2,1,0.5) {};
\node [coordinate, pin=below right:$\mathbf 5$] at (1.5,0,0) {};
\node [coordinate, pin=below left:$\mathbf 6$] at (0,2.5,0) {};

\foreach \pos/\vertname/\labelpos in {(0,0,0)/$\emptyset$/below, (1,0,0)/$2$/below, (2,0,0)/$25$/right, (0,1,0)/$1$/below, (1,1,0)/$12$/below, (2,1,0)/$125$/below, (0,2,0)/$13$/below, (1,2,0)/$123$/below, (0,3,0)/$136$/left, (1,3,0)/$1236$/above, (1,1,1)/$124$/above, (2,1,1)/$1245$/right, (1,2,1)/$1234$/left, (2,2,1)/$12345$/above, (2.4,0.6,1.7)/$12457$/above}
	\node [vertex, label=\labelpos:\footnotesize \vertname] at \pos {};

\end{tikzpicture}
\end{equation*}
\caption{The bijection from \cref{thm:Ardila-et-al}, illustrated with the PIP from \cref{fig:eg-PIP}. For example: the set $I = \{1,2,3,6\}$ is a consistent downset of $P$ and thus a vertex of $\protect\Squelta_P$ (shown as ``$1236$''), with maximal elements $\max I = \{2,6\}$. The face $C(\{1,2,3,6\},\{2,6\})$ is the left-most square in the figure, with vertex set $\big\{ \{1,2,3,6\}, \{1,2,3\}, \{1,3,6\}, \{1,3\} \big\}$; the northwest edge of that square is the face $C(\{1,2,3,6\}, \{2\}) = \big\{ \{1,2,3,6\}, \{1,3,6\} \big\}$.} \label{fig:eg-bijection}
\end{figure}

\section{The crossing complex} \label{sec:crossing}

At last, we can define the main tool of this paper: the crossing complex.

\begin{dfn}
Given a PIP $P$, the \emph{crossing complex} $\Delta_P$ is the simplicial complex whose vertex set is the underlying set of $P$, and whose faces are the consistent antichains of $P$.
\end{dfn}

For example, if $P$ is the running example PIP shown in \cref{fig:eg-PIP,fig:eg-bijection}, the crossing complex $\Delta_P$ is the simplicial complex shown in \cref{fig:eg-simplicial}. This example is reprinted in \cref{fig:examples}, along with several more examples.

\begin{figure}
\centering
\setlength{\tabcolsep}{0.3cm}
\begin{tabular}{c c c}
$\Squelta_P$ & $P$ & $\Delta_P$ \\ \hline
%
%
\begin{tikzpicture}[x={(0.8cm,0.6cm)}, y={(-1cm,0.4cm)}, z={(0cm,1.1cm)}]

\pgftransformscale{0.6}

\filldraw [face] (0,0,0) -- (1,0,0) -- (1,1,0) -- (0,1,0) -- cycle;
\filldraw [face] (0,1,0) -- (1,1,0) -- (1,2,0) -- (0,2,0) -- cycle;
\filldraw [face] (0,2,0) -- (1,2,0) -- (1,3,0) -- (0,3,0) -- cycle;
\filldraw [face] (1,0,0) -- (2,0,0) -- (2,1,0) -- (1,1,0) -- cycle;
\filldraw [face] (1,1,0) -- (1,1,1) -- (1,2,1) -- (1,2,0) -- cycle;
\filldraw [face] (1,1,0) -- (1,1,1) -- (2,1,1) -- (2,1,0) -- cycle;
\filldraw [face] (1,1,1) -- (2,1,1) -- (2,2,1) -- (1,2,1) -- cycle;

\draw [hyperpoint] (2,1,1) -- (2.4,0.6,1.7) node [pos=0.5, label={[label distance=-6pt]below right:$7$}] {};

\draw [hyperplane] (0.5,0,0) -- (0.5,3,0);
\draw [hyperplane] (1.5,0,0) -- (1.5,1,0) -- (1.5,1,1) -- (1.5,2,1);
\draw [hyperplane] (0,0.5,0) -- (2,0.5,0);
\draw [hyperplane] (0,1.5,0) -- (1,1.5,0) -- (1,1.5,1) -- (2,1.5,1);
\draw [hyperplane] (0,2.5,0) -- (1,2.5,0);
\draw [hyperplane] (1,2,0.5) -- (1,1,0.5) -- (2,1,0.5);

\node [vertex, inner sep=1.4pt, label=below:$v_0$] at (0,0,0) {};

\node [coordinate, label=below left:$1$] at (0,0.5,0) {};
\node [coordinate, label=below right:$2$] at (0.5,0,0) {};
\node [coordinate, label=below left:$3$] at (0,1.5,0) {};
\node [coordinate, label=right:$4$] at (2,1,0.5) {};
\node [coordinate, label=below right:$5$] at (1.5,0,0) {};
\node [coordinate, label=below left:$6$] at (0,2.5,0) {};

\end{tikzpicture} & \begin{tikzpicture}

\pgftransformscale{0.8}

\node [vertex, label=left:$1$] (v1) at (0.5,0) {};
\node [vertex, label=right:$2$] (v2) at (1.5,0) {};
\node [vertex, label=left:$3$] (v3) at (0,1) {};
\node [vertex, label=right:$4$] (v4) at (1,1) {};
\node [vertex, label=right:$5$] (v5) at (2,1) {};
\node [vertex, label=left:$6$] (v6) at (0,2) {};
\node [vertex, label=right:$7$] (v7) at (1.5,2) {};

\draw [edge] (v1) -- (v3) -- (v6);
\draw [edge] (v1) -- (v4) -- (v2) -- (v5) -- (v7) -- (v4);

\draw [edge, dashed] (v4) -- (v6) -- (v5) (v3) -- (v7);

\end{tikzpicture} & \begin{tikzpicture}

\pgftransformscale{0.8}

\node [vertex, label=below:$1$] (v1) at (0,0) {};
\node [vertex, label=below:$2$] (v2) at (1.2,0.2) {};
\node [vertex, label=right:$3$] (v3) at (1,1.1) {};
\node [vertex, label=above:$4$] (v4) at (0.5,1.8) {};
\node [vertex, label=left:$5$] (v5) at (-0.1,0.9) {};
\node [vertex, label=below:$6$] (v6) at (2.2,0) {};
\node [vertex, label=above:$7$] (v7) at (2.3,1.3) {};

\draw [edge] (v5) -- (v1) -- (v2) -- (v3);
\filldraw [face] (v3.center) -- (v4.center) -- (v5.center) -- cycle;
\draw [edge] (v2) -- (v6);
\end{tikzpicture} \\
%
%
\begin{tikzpicture}[x={(0.8cm,0.6cm)}, y={(-1cm,0.4cm)}, z={(0cm,1.1cm)}]

\pgftransformscale{0.6}

\filldraw [face] (0,0,0) -- (1,0,0) -- (1,0,1) -- (0,0,1) -- cycle;
\filldraw [face] (0,0,0) -- (0,1,0) -- (0,1,1) -- (0,0,1) -- cycle;
\filldraw [face] (0,0,1) -- (1,0,1) -- (1,1,1) -- (0,1,1) -- cycle;

\draw [hyperplane] (0,1,0.5) -- (0,0,0.5) -- (1,0,0.5);
\draw [hyperplane] (0.5,0,0) -- (0.5,0,1) -- (0.5,1,1);
\draw [hyperplane] (0,0.5,0) -- (0,0.5,1) -- (1,0.5,1);

\node [coordinate, label=left:$1$] at (0,1,0.5) {};
\node [coordinate, label=below left:$2$] at (0,0.5,0) {};
\node [coordinate, label=below right:$3$] at (0.5,0,0) {};

\node [vertex, inner sep=1.4pt, label=below:$v_0$] at (0,0,0) {};

\end{tikzpicture} & \begin{tikzpicture}

\pgftransformscale{0.8}

\node [vertex, label=below:$1$] (v1) at (0,0) {};
\node [vertex, label=below:$2$] (v2) at (1,0) {};
\node [vertex, label=below:$3$] (v3) at (2,0) {};

\end{tikzpicture} & \begin{tikzpicture}

\node [vertex, label=below left:$1$] (v1) at (0,0) {};
\node [vertex, label=below right:$2$] (v2) at (1,0) {};
\node [vertex, label=above:$3$] (v3) at (0.5,0.8) {};

\filldraw [face] (v1.center) -- (v2.center) -- (v3.center) -- cycle;

\end{tikzpicture} \\
%
%
\begin{tikzpicture}[x={(0.8cm,0.6cm)}, y={(-1cm,0.4cm)}, z={(0cm,1.1cm)}]

\pgftransformscale{0.6}

\filldraw [face] (0,0,0) -- (1,0,0) -- (1,0,1) -- (0,0,1) -- cycle;
\filldraw [face] (1,0,0) -- (2,0,0) -- (2,0,1) -- (1,0,1) -- cycle;
\filldraw [face] (1,0,1) -- (2,0,1) -- (2,0,2) -- (1,0,2) -- cycle;
\filldraw [face] (2,0,1) -- (3,0,1) -- (3,0,2) -- (2,0,2) -- cycle;
\filldraw [face] (0,0,0) -- (0,1,0) -- (0,1,1) -- (0,0,1) -- cycle;
\filldraw [face] (0,1,0) -- (0,2,0) -- (0,2,1) -- (0,1,1) -- cycle;
\filldraw [face] (1,0,1) -- (1,1,1) -- (1,1,2) -- (1,0,2) -- cycle;
\filldraw [face] (0,0,1) -- (1,0,1) -- (1,1,1) -- (0,1,1) -- cycle;
\filldraw [face] (0,1,1) -- (1,1,1) -- (1,2,1) -- (0,2,1) -- cycle;
\filldraw [face] (1,0,2) -- (2,0,2) -- (2,1,2) -- (1,1,2) -- cycle;
\filldraw [face] (2,0,2) -- (3,0,2) -- (3,1,2) -- (2,1,2) -- cycle;

\draw [hyperplane] (0,2,0.5) -- (0,0,0.5) -- (2,0,0.5);
\draw [hyperplane] (0.5,0,0) -- (0.5,0,1) -- (0.5,2,1);
\draw [hyperplane] (0,0.5,0) -- (0,0.5,1) -- (1,0.5,1) -- (1,0.5,2) -- (3,0.5,2);
\draw [hyperplane] (1,1,1.5) -- (1,0,1.5) -- (3,0,1.5);
\draw [hyperplane] (1.5,0,0) -- (1.5,0,2) -- (1.5,1,2);
\draw [hyperplane] (0,1.5,0) -- (0,1.5,1) -- (1,1.5,1);
\draw [hyperplane] (2.5,0,1) -- (2.5,0,2) -- (2.5,1,2);

\node [coordinate, label=left:$1$] at (0,2,0.5) {};
\node [coordinate, label=below right:$2$] at (0.5,0,0) {};
\node [coordinate, label=below left:$3$] at (0,0.5,0) {};
\node [coordinate, label=right:$4$] at (3,0,1.5) {};
\node [coordinate, label=below right:$5$] at (1.5,0,0) {};
\node [coordinate, label=below left:$6$] at (0,1.5,0) {};
\node [coordinate, label=below right:$7$] at (2.5,0,1) {};

\node [vertex, inner sep=1.4pt, label=below:$v_0$] at (0,0,0) {};

\end{tikzpicture} & \begin{tikzpicture}

\pgftransformscale{0.8}

\node [vertex, label=left:$1$] (v1) at (0,0.5) {};
\node [vertex, label=below:$2$] (v2) at (1,0) {};
\node [vertex, label=below right:$3$] (v3) at (2,0.5) {};
\node [vertex, label=left:$4$] (v4) at (0,1.5) {};
\node [vertex, label=below right:$5$] (v5) at (1,1) {};
\node [vertex, label=right:$6$] (v6) at (2,1.5) {};
\node [vertex, label=above:$7$] (v7) at (1,2) {};

\draw [edge] (v1) -- (v4) -- (v2) -- (v5) -- (v7) -- (v1) (v3) -- (v6);
\draw [inconsistent] (v5) -- (v6) -- (v4);

\end{tikzpicture} & \begin{tikzpicture}

\node [vertex, label=left:$1$] (v1) at (0,1) {};
\node [vertex, label=right:$2$] (v2) at (0.8,0.5) {};
\node [vertex, label=below right:$3$] (v3) at (0.8,1.5) {};
\node [vertex, label=above:$4$] (v4) at (0.8,2.5) {};
\node [vertex, label=left:$5$] (v5) at (0,2) {};
\node [vertex, label=left:$6$] (v6) at (0,0) {};
\node [vertex, label=right:$7$] (v7) at (1.6,2) {};

\filldraw [face] (v1.center) -- (v2.center) -- (v6.center) -- cycle;
\filldraw [face] (v1.center) -- (v2.center) -- (v3.center) -- cycle;
\filldraw [face] (v1.center) -- (v3.center) -- (v5.center) -- cycle;
\filldraw [face] (v3.center) -- (v4.center) -- (v5.center) -- cycle;
\filldraw [face] (v3.center) -- (v4.center) -- (v7.center) -- cycle;

\end{tikzpicture} \\
%
%
\begin{tikzpicture}[y={(0cm,-1cm)}]

\pgftransformscale{0.6}

\foreach \coord in {(0,0), (1,0), (2,0), (3,0), (4,0), (0,1), (1,1), (2,1), (3,1), (0,2), (1,2), (2,2), (0,3)}
	\filldraw [face] \coord rectangle ++(1,1);

\draw [hyperplane] (0.5,0) -- (0.5,4);
\draw [hyperplane] (1.5,0) -- (1.5,3);
\draw [hyperplane] (2.5,0) -- (2.5,3);
\draw [hyperplane] (3.5,0) -- (3.5,2);
\draw [hyperplane] (4.5,0) -- (4.5,1);
\draw [hyperplane] (0,0.5) -- (5,0.5);
\draw [hyperplane] (0,1.5) -- (4,1.5);
\draw [hyperplane] (0,2.5) -- (3,2.5);
\draw [hyperplane] (0,3.5) -- (1,3.5);

\node [coordinate, label=above:$1$] at (0.5,0) {};
\node [coordinate, label=above:$2$] at (1.5,0) {};
\node [coordinate, label=above:$3$] at (2.5,0) {};
\node [coordinate, label=above:$4$] at (3.5,0) {};
\node [coordinate, label=above:$5$] at (4.5,0) {};
\node [coordinate, label=left:$6$] at (0,0.5) {};
\node [coordinate, label=left:$7$] at (0,1.5) {};
\node [coordinate, label=left:$8$] at (0,2.5) {};
\node [coordinate, label=left:$9$] at (0,3.5) {};

\node [vertex, inner sep=1.4pt, label=above left:$v_0$] at (0,0) {};

\end{tikzpicture} & \begin{tikzpicture}

\pgftransformscale{0.7}

\node [vertex, label=left:$1$] (v1) at (0,0) {};
\node [vertex, label=left:$2$] (v2) at (0,1) {};
\node [vertex, label=left:$3$] (v3) at (0,2) {};
\node [vertex, label=left:$4$] (v4) at (0,3) {};
\node [vertex, label=left:$5$] (v5) at (0,4) {};
\node [vertex, label=right:$6$] (v6) at (2,0.5) {};
\node [vertex, label=right:$7$] (v7) at (2,1.5) {};
\node [vertex, label=right:$8$] (v8) at (2,2.5) {};
\node [vertex, label=right:$9$] (v9) at (2,3.5) {};

\draw [edge] (v1) -- (v2) -- (v3) -- (v4) -- (v5) (v6) -- (v7) -- (v8) -- (v9);
\draw [inconsistent] (v2) -- (v9) (v4) -- (v8) (v5) -- (v7);

\end{tikzpicture} & \begin{tikzpicture}

\pgftransformscale{0.7}

\node [vertex, label=left:$1$] (v1) at (0,0) {};
\node [vertex, label=left:$2$] (v2) at (0,1) {};
\node [vertex, label=left:$3$] (v3) at (0,2) {};
\node [vertex, label=left:$4$] (v4) at (0,3) {};
\node [vertex, label=left:$5$] (v5) at (0,4) {};
\node [vertex, label=right:$6$] (v6) at (2,0.5) {};
\node [vertex, label=right:$7$] (v7) at (2,1.5) {};
\node [vertex, label=right:$8$] (v8) at (2,2.5) {};
\node [vertex, label=right:$9$] (v9) at (2,3.5) {};

\foreach \vertex in {v6, v7, v8, v9}
	\draw [edge] (v1) -- (\vertex);
\foreach \vertex in {v6, v7, v8}
	\draw [edge] (v2) -- (\vertex);
\foreach \vertex in {v6, v7, v8}
	\draw [edge] (v3) -- (\vertex);
\foreach \vertex in {v6, v7}
	\draw [edge] (v4) -- (\vertex);
\foreach \vertex in {v6}
	\draw [edge] (v5) -- (\vertex);

\end{tikzpicture} \\
%
%
\begin{tikzpicture}

\pgftransformscale{0.8}

\node [vertex, inner sep=1.4pt, label=above:$v_0$] (v0) at (0,0) {};

\draw [hyperpoint] (v0) -- ++(40:1cm) node [pos=0.5, label={[label distance=-7pt]below right:$1$}] {} node [coordinate] (v1) {};

\draw [hyperpoint] (v1) -- ++(100:1cm) node [pos=0.5, label={[label distance=-3pt]left:$4$}] {} node [coordinate] (v14) {};

\draw [hyperpoint] (v14) -- ++(20:1cm) node [pos=0.5, label={[label distance=-3pt]above:$7$}] {} node [coordinate] (v147) {};

\draw [hyperpoint] (v1) -- ++(-10:1cm) node [pos=0.5, label={[label distance=-3pt]below:$5$}] {} node [coordinate] (v15) {};

\draw [hyperpoint] (v15) -- ++(70:1cm) node [pos=0.5, label={[label distance=-6pt]above left:$8$}] {} node [coordinate] (v158) {};

\draw [hyperpoint] (v15) -- ++(-40:1cm) node [pos=0.5, label={[label distance=-6pt]above right:$9$}] {} node [coordinate] (v159) {};

\draw [hyperpoint] (v0) -- ++(-60:1cm) node [pos=0.5, label={[label distance=-3pt]left:$2$}] {} node [coordinate] (v2) {};

\draw [hyperpoint] (v2) -- ++(10:1cm) node [pos=0.5, label={[label distance=-3pt]below:$6$}] {} node [coordinate] (v26) {};

\draw [hyperpoint] (v0) -- ++(170:1cm) node [pos=0.5, label={[label distance=-3pt]below:$3$}] {} node [coordinate] (v3) {};

\end{tikzpicture} & \begin{tikzpicture}

\pgftransformscale{0.8}

\node [vertex, label=below left:$1$] (v1) at (0,0) {};
\node [vertex, label=below:$2$] (v2) at (1.5,0) {};
\node [vertex, label=below right:$3$] (v3) at (2.5,0) {};
\node [vertex, label=left:$4$] (v4) at (-0.5,1) {};
\node [vertex, label=right:$5$] (v5) at (0.5,1) {};
\node [vertex, label=right:$6$] (v6) at (1.5,1) {};
\node [vertex, label=above:$7$] (v7) at (-1,2) {};
\node [vertex, label=above:$8$] (v8) at (0,2) {};
\node [vertex, label=above:$9$] (v9) at (1,2) {};

\draw [edge] (v7) -- (v4) -- (v1) -- (v5) -- (v8) (v5) -- (v9) (v2) -- (v6);
\draw [inconsistent] (v1) -- (v2) -- (v3) .. controls (1.25,0.5) .. (v1) (v4) -- (v5) (v8) -- (v9);

\end{tikzpicture} & \begin{tikzpicture}

\pgftransformscale{0.8}

\foreach \v/\pos in {7/{(0,0)}, 8/{(1,0)}, 9/{(2,0)}, 4/{(0.5,-1)}, 5/{(1.5,-1)}, 6/{(2.5,-1)}, 1/{(1,-2)}, 2/{(2,-2)}, 3/{(3,-2)}}
	\node [vertex, label=above:$\v$] at \pos {};

\end{tikzpicture} \\
%
%
\begin{tikzpicture}[3D]

\pgftransformscale{0.7}

\filldraw [face] (0,0,0) -- (1,0,0) -- (1,1,0) -- (0,1,0) -- cycle;
\filldraw [face] (0,0,0) -- (1,0,0) -- (1,-1,0) -- (0,-1,0) -- cycle;
\filldraw [face] (0,0,0) -- (-1,0,0) -- (-1,1,0) -- (0,1,0) -- cycle;
\filldraw [face] (0,0,0) -- (-1,0,0) -- (-1,-1,0) -- (0,-1,0) -- cycle;

\draw [hyperplane] (0.5,-1,0) -- (0.5,1,0) (-0.5,-1,0) -- (-0.5,1,0) (-1,0.5,0) -- (1,0.5,0) (-1,-0.5,0) -- (1,-0.5,0);

\filldraw [face] (0,0,0) -- (1,0,0) -- (1,0,1) -- (0,0,1) -- cycle;

\draw [hyperplane] (0.5,0,0) -- (0.5,0,1) (0,0,0.5) -- (1,0,0.5);

\node [vertex, inner sep=1.4pt, label=below:$v_0$] at (0,0,0) {};

\node [coordinate, label=below left:$1$] at (-1,0.5,0) {};
\node [coordinate, label=below left:$2$] at (-1,-0.5,0) {};
\node [coordinate, label=below right:$3$] at (-0.5,-1,0) {};
\node [coordinate, label=below right:$4$] at (0.5,-1,0) {};
\node [coordinate, label=right:$5$] at (1,0,0.5) {};

\end{tikzpicture} & \begin{tikzpicture}

\pgftransformscale{0.9}

\node [vertex, label=above:$1$] (v1) at (0,1) {};
\node [vertex, label=below:$2$] (v2) at (0,-1) {};
\node [vertex, label=left:$3$] (v3) at (-1,0) {};
\node [vertex, label=above:$4$] (v4) at (1,0) {};
\node [vertex, label=above right:$5$] (v5) at (0.3,0.3) {};

\draw [inconsistent] (v5) -- (v1) -- (v2) -- (v5) -- (v3) -- (v4);

\end{tikzpicture} & \begin{tikzpicture}

\pgftransformscale{0.9}

\node [vertex, label=above:$1$] (v1) at (0,1) {};
\node [vertex, label=below:$2$] (v2) at (0,-1) {};
\node [vertex, label=left:$3$] (v3) at (-1,0) {};
\node [vertex, label=above:$4$] (v4) at (1,0) {};
\node [vertex, label=left:$5$] (v5) at (0,0) {};

\draw [edge] (v5) -- (v4) -- (v1) -- (v3) -- (v2) -- (v4);

\end{tikzpicture}
\end{tabular}
\caption{Some $\CAT(0)$ cubical complexes, their associated PIPs, and their crossing complexes} \label{fig:examples}
\end{figure}
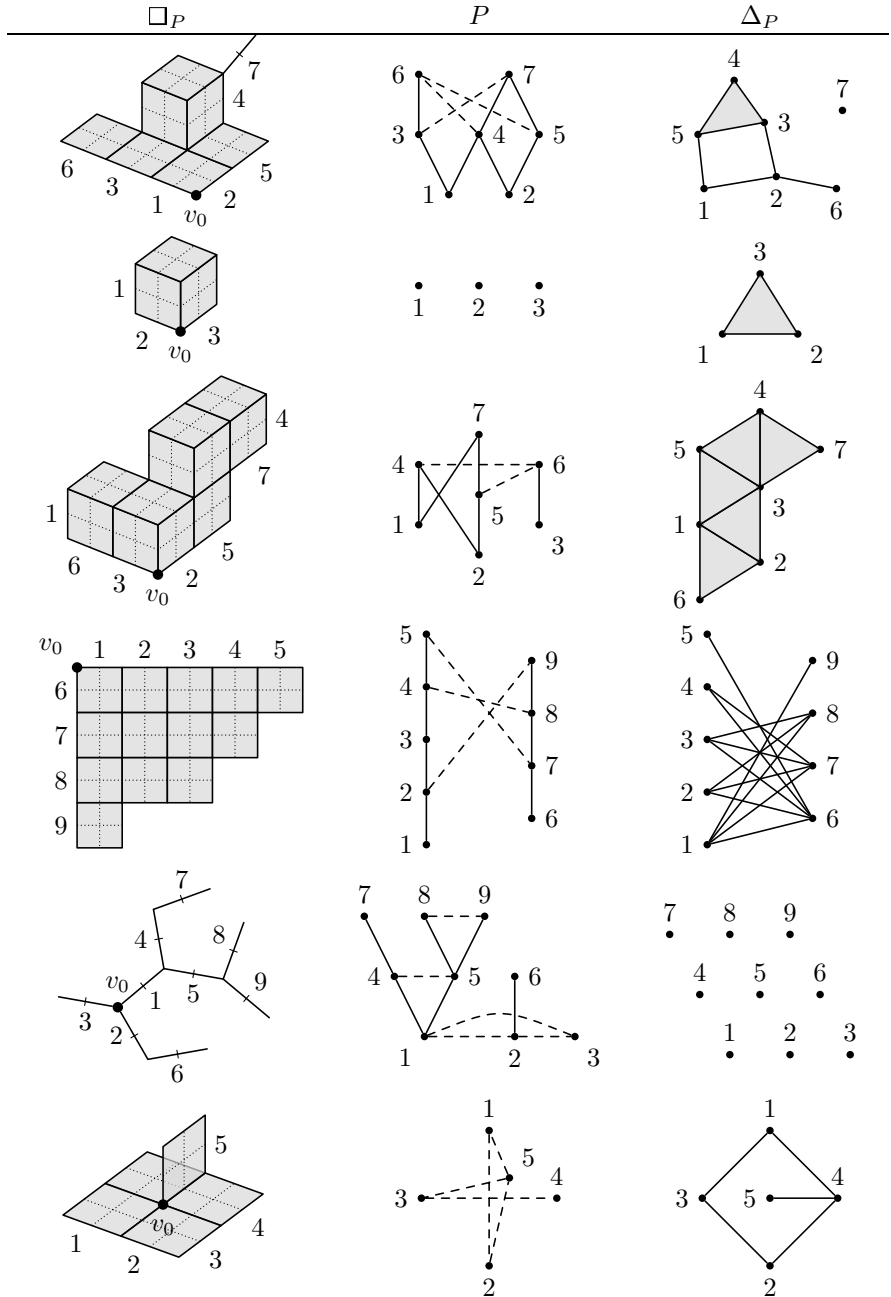

Observe that $\Delta_P$ is a flag simplicial complex, since it is the anticlique complex of the graph whose edges are the comparable or inconsistent pairs in $P$. Our goal for the rest of this paper is to demonstrate that the crossing complex $\Delta_P$ and the $\CAT(0)$ cubical complex $\Squelta_P$ share many properties.

For a first connection, observe that hyperplanes $h_1$ and $h_2$ in $\Squelta_P$ intersect if and only if $h_1$ and $h_2$ are consistent and incomparable elements in $P$. 
The underlying graph of $\Delta_P$, which therefore has an edge between two hyperplanes precisely when they intersect in $\Squelta_P$, was previously called the ``crossing graph'' of $\Squelta_P$ by \citet{art:Hagen-hyperbolicity} and the ``transversality graph'' by \citet{art:Roller}. Hagen notes the following lemma:
\begin{lma}[{\cite[Lemma~3.5]{art:Hagen-hyperbolicity}}]
If $h_1, \dotsc, h_k$ are hyperplanes of a $\CAT(0)$ complex with $h_i \cap h_j \neq \emptyset$ for every $i$ and $j$, then $\bigcap_{i=1}^k h_i \neq \emptyset$.
\end{lma}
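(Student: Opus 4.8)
The plan is to pass through the PIP dictionary and then exhibit a single cube of $\Squelta$ whose dual midplanes are exactly (pieces of) $h_1, \dots, h_k$; its barycenter will be the common point. First I would fix a root vertex $v_0$ of $\Squelta$ and let $P = P_\Squelta$ be the associated PIP, so that by \cref{thm:Ardila-et-al} the hyperplanes of $\Squelta$ are literally the elements of $P$ and $\Squelta$ is identified with $\Squelta_P$. Discarding repeated hyperplanes does not affect the intersection, so I may assume $h_1, \dots, h_k$ are distinct; then the observation recalled just above this lemma --- that two hyperplanes of $\Squelta_P$ meet if and only if they are consistent and incomparable in $P$ --- says precisely that $A \coloneqq \{h_1, \dots, h_k\}$ is a consistent antichain of $P$.

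Next I would set $I \coloneqq \downset A$. A one-line check using that $\incons$ is inherited upwards (and antireflexive, to rule out $a = a'$) shows that $I$ is a consistent downset, and by \cref{thm:antichains-downsets} its set of maximal elements is exactly $A$. Since $A \subseteq \max I$, the pair $(I, A)$ determines a face $C(I, A)$ of $\Squelta_P$ by \cref{dfn:Ardila-bijection}: a $k$-dimensional cube with one midplane for each element of $A$. Identifying $C(I,A)$ with $[0,1]^A$ so that the vertex $I \setminus N$ is the $0/1$-point vanishing exactly on $N$, the midplane dual to $h \in A$ is the slice $\{x : x_h = \tfrac12\}$, and all $k$ of these slices contain the barycenter $b = (\tfrac12, \dots, \tfrac12)$. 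So as soon as we know that this $h$-midplane is part of the hyperplane $h$, we get $b \in \bigcap_{i=1}^k h_i$, and the lemma follows.

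I expect the genuine content to be precisely that last identification --- that the midplane of $C(I,M)$ dual to $h \in M$ really is a piece of the hyperplane $h$ of $\Squelta_P$, equivalently that crossing the edge $C(J,\{h\})$ crosses hyperplane $h$ and no other. This is built into \citeauthor{art:Ardila-geodesics}'s correspondence rather than into anything developed in this paper; unwinding the definitions of $P_\Squelta$ and $\Squelta_P$, it reduces to checking that the two sides into which $h$ divides $\Squelta_P$ are the vertices (consistent downsets) $J$ with $h \in J$ and those with $h \notin J$. Granting that, the remaining steps --- reducing to a consistent antichain, building the cube $C(\downset A, A)$, and reading off its barycenter --- are routine.
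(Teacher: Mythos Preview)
The paper does not give its own proof of this lemma: it is quoted verbatim from \cite[Lemma~3.5]{art:Hagen-hyperbolicity} and used as a black box. Your argument, by contrast, supplies a short combinatorial proof entirely within the PIP framework the paper develops, and it is correct.

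Your route is: pairwise intersection of the $h_i$ translates (via the observation immediately preceding the lemma) to $A=\{h_1,\dots,h_k\}$ being a consistent antichain; then $C(\downset A,A)$ is a $k$-cube of $\Squelta_P$ whose $k$ midplanes are pieces of $h_1,\dots,h_k$, and its barycenter lies in all of them. The only point you flag as needing outside input---that the midplane of $C(I,M)$ dual to $h\in M$ really belongs to the hyperplane $h$---is indeed part of the \citeauthor{art:Ardila-geodesics} correspondence (and is, in effect, reproved later in the paper as \cref{thm:hyperplanes-cube-embedding}). So nothing is missing; you have simply replaced Hagen's geometric argument with a direct appeal to the $\Squelta_P$ dictionary, which arguably fits the spirit of this paper better than the cited proof does.
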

In terms of the crossing complex, this means that the faces of $\Delta_P$ are exactly the sets of hyperplanes of $\Squelta_P$ with a non-empty intersection. \label{thm:crossing-is-nerve} Consequently, we can compute the crossing complex of $P$ directly from $\Squelta_P$, and the result does not depend on the choice of root vertex of $\Squelta_P$. 


Recall that any graph $G$ can be thought of as a PIP with no order relations and an inconsistent pair for each edge. In this interpretation, the consistent antichains are just the consistent sets of vertices, i.e.\ the anticliques of $G$. Therefore, since any flag simplicial complex is the anticlique complex of some graph, we obtain the following lemma:

\begin{lma} \label{thm:all-flags-are-crossings}
Every flag simplicial complex is the crossing complex of some $\CAT(0)$ cubical complex.
\end{lma}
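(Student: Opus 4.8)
The plan is to use the observation, recalled in the paragraph just before the statement, that an arbitrary finite simple graph $G$ can be viewed as a PIP $P$ whose underlying set is $V(G)$, with no nontrivial order relations and with $a \incons b$ precisely when $ab$ is an edge of $G$. One checks immediately that this is a legitimate PIP: antireflexivity and symmetry of $\incons$ hold because $G$ is simple and undirected, and the upward-inheritance axiom is vacuous since the only instances of $a \leq a'$ are equalities.

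First I would identify the crossing complex of such a $P$. Because the order on $P$ is trivial, every subset of $V(G)$ is an antichain, so the faces of $\Delta_P$ --- by definition the consistent antichains --- are exactly the subsets of $V(G)$ that contain no inconsistent pair, i.e.\ the independent sets of $G$. Hence $\Delta_P$ is precisely the anticlique complex of $G$.

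Now let $\Delta$ be any flag simplicial complex. Since $\Delta$ is flag, it is the clique complex of its underlying graph $H$, which (as noted in \cref{sec:definitions}) coincides with the anticlique complex of the complement graph $\bar H$. Applying the previous step with $G \coloneqq \bar H$ produces a PIP $P$ for which $\Delta_P = \Delta$. By \cref{thm:Ardila-et-al}, $\Squelta_P$ is a $\CAT(0)$ cubical complex, and by construction $\Delta_P$ is its crossing complex; thus $\Delta$ is realised as the crossing complex of $\Squelta_P$, as required.

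I do not expect any real obstacle: the argument is entirely a matter of unwinding the definitions of the crossing complex and of a flag complex, together with the standard fact that a flag complex is the clique complex of its $1$-skeleton, and then invoking \cref{thm:Ardila-et-al} to guarantee that the PIP we build genuinely arises from a $\CAT(0)$ cubical complex.
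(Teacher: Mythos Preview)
Your proposal is correct and follows essentially the same route as the paper: view a graph as a PIP with trivial order and edges as inconsistent pairs, observe that the crossing complex is then the anticlique complex, and use that every flag complex is the anticlique complex of (the complement of) its $1$-skeleton. The paper's argument is just a terser version of what you wrote.
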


This lemma was also essentially observed by \citet[Proposition~2.15]{art:Hagen-hyperbolicity} (among others), with a more complicated proof.\footnote{Hagen writes: ``The fact that every [simple] graph is a crossing graph means that there is little hope of any general geometric statements about crossing graphs of cube complexes'' \citep[p.~35]{ths:Hagen-thesis}. We disagree.} However, note that different $\CAT(0)$ cubical complexes may have the same crossing complex --- for example, any tree with $n$ edges is a $1$-dimensional $\CAT(0)$ cubical complex, and the crossing complex of such a tree always consists of $n$ isolated points.

Next, we turn our attention to $f$-vectors.

\begin{dfn}
If $K$ is a complex (simplicial or cubical), define $f_i(K)$ to be the number of $i$-dimensional faces of $K$.

If $\Delta$ is a $(d-1)$-dimensional simplicial complex, the \emph{$f$-vector} of $\Delta$ is the tuple $\big( f_{-1}(\Delta), \dotsc, f_{d-1}(\Delta) \big)$, and the \emph{$f$-polynomial} is:
\begin{equation*}
f(\Delta, t) \coloneqq \sum_{i=0}^d f_{i-1}(\Delta) \, t^i.
\end{equation*}
If $\Squelta$ is a $d$-dimensional cubical complex, its $f$-vector is $\big( f_0(\Squelta), \dotsc, f_d(\Squelta) \big)$, and its $f$-polynomial is:
\begin{equation*}
f(\Squelta, t) \coloneqq \sum_{i=0}^d f_i(\Squelta) \, t^i.
\end{equation*}
Note the difference in conventions between the simplicial and cubical cases! For instance, the constant term of $f(\Delta, t)$ is $f_{-1}(\Delta)$, which is always $1$ to count the empty face; whereas the constant term of $f(\Squelta, t)$ is $f_0(\Squelta)$, the number of vertices of $\Squelta$.
\end{dfn}

This brings us to the main result of this section.

\begin{thm} \label{thm:f-polynomials}
Let $P$ be a PIP. Then $f(\Squelta_P, t) = f(\Delta_P, 1+t)$.
\end{thm}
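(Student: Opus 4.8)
The plan is to compute both sides as generating functions obtained by summing a monomial over all faces, and to show that both sums collapse to the same expression indexed by the consistent downsets of $P$.

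First I would handle the cubical side. By \cref{dfn:Ardila-bijection}, the faces of $\Squelta_P$ are the cubes $C(I,M)$, one for each pair $(I,M)$ with $I$ a consistent downset and $M \subseteq \max I$; this parametrisation is a genuine bijection, since $I = \bigcup_{N \subseteq M}(I \setminus N)$ and $I \setminus M = \bigcap_{N \subseteq M}(I \setminus N)$, so both $I$ and $M$ can be recovered from the vertex set of the cube. As $\dim C(I,M) = \abs M$, the binomial theorem gives
\[
f(\Squelta_P, t) = \sum_{(I,M)} t^{\abs M} = \sum_{I} \sum_{M \subseteq \max I} t^{\abs M} = \sum_{I} (1+t)^{\abs{\max I}},
\]
where $I$ ranges over the consistent downsets of $P$.

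Next I would handle the simplicial side. The faces of $\Delta_P$ are exactly the consistent antichains of $P$, and a consistent antichain $A$ is a face of dimension $\abs A - 1$, so by definition of the $f$-polynomial
\[
f(\Delta_P, s) = \sum_{A} s^{\abs A},
\]
the sum running over all consistent antichains $A$, with the empty antichain contributing the constant term $1 = f_{-1}(\Delta_P)$. Now \cref{thm:antichains-downsets} says that $I \mapsto \max I$ is a bijection from consistent downsets to consistent antichains, sending $I$ to an antichain of size $\abs{\max I}$; hence $f(\Delta_P, s) = \sum_I s^{\abs{\max I}}$. Substituting $s = 1+t$ and comparing with the formula above yields $f(\Delta_P, 1+t) = f(\Squelta_P, t)$.

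The argument is essentially bookkeeping, and I do not expect a substantial obstacle; the points that need care are the differing conventions for the simplicial and cubical $f$-polynomials — in particular matching the constant terms, where $\emptyset$ is simultaneously the empty face of $\Delta_P$, the empty antichain, and $\max\emptyset$ for the empty consistent downset — and verifying that $(I,M) \mapsto C(I,M)$ is injective, so that no face of $\Squelta_P$ is counted twice.
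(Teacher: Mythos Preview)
Your proof is correct and follows essentially the same approach as the paper's: both use the parametrisation of faces of $\Squelta_P$ by pairs $(I,M)$, the bijection of \cref{thm:antichains-downsets} between consistent downsets and consistent antichains, and the binomial theorem. The only cosmetic difference is the order of operations --- the paper first replaces downsets $I$ by antichains $A$ and then sums over $M \subseteq A$, whereas you sum over $M \subseteq \max I$ first and apply the bijection afterwards --- but the computation is the same.
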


\begin{proof}
Let ``$A \trianglelefteq P$'' mean ``$A$ is a consistent antichain of $P$''.

By construction, the $i$-dimensional faces of $\Squelta_P$ are in bijection with pairs $(I,M)$ where $I \subseteq P$ is a consistent downset and $M \subseteq \max I$ with $\abs M = i$. \Cref{thm:antichains-downsets} says these pairs are in turn in bijection with pairs $(A,M)$ where $A$ is a consistent antichain of $P$ and $M \subseteq A$ with $\abs M = i$.

Therefore, using the convention that $f_i(K) = 0$ if $i > \dim K$,
\begin{align*}
f(\Squelta_P, t) & = \sum_{i = 0}^\infty f_i(\Squelta_P) t^i \\
& = \sum_{i=0}^\infty \sum_{A \trianglelefteq P} \sum_{\substack{M \subseteq A \\ \abs M = i}} t^i \\
& = \sum_{A \trianglelefteq P} \sum_{i = 0}^\infty \binom{\abs A}{i} t^i \\
& = \sum_{A \trianglelefteq P} (1+t)^{\abs A} \\
& = \sum_{j = 0}^\infty f_{j-1}(\Delta_P) (1+t)^j \\
& = f(\Delta_P, 1+t). \qedhere
\end{align*}
\end{proof}


This formula has some neat immediate consequences. 

\begin{crl} \label{thm:dimensions}
$\dim \Squelta_P = \dim \Delta_P + 1$.
\end{crl}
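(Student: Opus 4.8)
The plan is to read this off from the $f$-polynomial identity of \cref{thm:f-polynomials} by comparing degrees. First I would record the (routine) fact that the $f$-polynomial detects dimension: if $\Squelta$ is a $d$-dimensional cubical complex, then $\deg f(\Squelta, t) = d$, since the coefficient of $t^d$ is $f_d(\Squelta)$, the number of top-dimensional faces, which is at least $1$; likewise, if $\Delta$ is a simplicial complex of dimension $e - 1$, then $\deg f(\Delta, t) = e = \dim \Delta + 1$, the leading coefficient being $f_{e-1}(\Delta) \geq 1$. (When $\Delta = \{\emptyset\}$ this reads $\deg f(\Delta, t) = 0 = \dim \Delta + 1$, which is still correct.)

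Next I would observe that substituting $t \mapsto 1 + t$ is an affine change of variable of degree one, hence it preserves the degree (and the leading coefficient) of any polynomial, so that $\deg f(\Delta_P, 1+t) = \deg f(\Delta_P, t)$ as polynomials in $t$. Combining this with \cref{thm:f-polynomials},
\[
\dim \Squelta_P = \deg f(\Squelta_P, t) = \deg f(\Delta_P, 1+t) = \deg f(\Delta_P, t) = \dim \Delta_P + 1,
\]
which is the claim.

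There is essentially no obstacle here; the only point requiring a moment's care is that the ``degree'' of each $f$-polynomial is genuinely the dimension, i.e.\ that every (nonvoid) simplicial or cubical complex has at least one face of top dimension, together with a glance at the degenerate case $P = \emptyset$, where $\Squelta_P$ is a single vertex and $\Delta_P = \{\emptyset\}$, so that $\dim \Squelta_P = 0 = -1 + 1 = \dim \Delta_P + 1$. Alternatively, one can bypass the $f$-polynomial entirely: by the bijection established in the proof of \cref{thm:f-polynomials}, the $i$-dimensional faces of $\Squelta_P$ correspond to pairs $(A, M)$ with $A$ a consistent antichain of $P$ and $M \subseteq A$ of size $i$, so $\dim \Squelta_P$ equals the maximum cardinality of a consistent antichain of $P$; on the other hand the faces of $\Delta_P$ are exactly the consistent antichains, with dimension one less than cardinality, so $\dim \Delta_P$ is that same maximum minus one, and the identity follows directly.
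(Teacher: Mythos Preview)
Your proof is correct and follows essentially the same approach as the paper: both read the equality of dimensions off from the degrees of the $f$-polynomials via \cref{thm:f-polynomials}, using that the affine substitution $t \mapsto 1+t$ preserves degree. You are simply more careful about justifying that the degree of the $f$-polynomial equals the dimension (respectively, the dimension plus one), and you supplement this with an alternative direct bijective argument that the paper does not give.
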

\begin{proof}
The dimension of $\Squelta_P$ is the degree of its $f$-polynomial, and the dimension of $\Delta_P$ is $1$ less than the degree of its $f$-polynomial.
\end{proof}

\begin{crl}
The Euler characteristic $\chi(\Squelta_P)$ of $\Squelta_P$ is $1$.
\end{crl}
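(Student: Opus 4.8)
The plan is to derive this directly from \cref{thm:f-polynomials}, in the same spirit as the preceding corollary. Viewing $\Squelta_P$ as a CW complex, its Euler characteristic is the alternating sum of the numbers of cells in each dimension; since the $i$-cells of $\Squelta_P$ are exactly its $i$-dimensional faces, we have $\chi(\Squelta_P) = \sum_{i \geq 0} (-1)^i f_i(\Squelta_P) = f(\Squelta_P, -1)$.

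Now I would substitute $t = -1$ into the identity $f(\Squelta_P, t) = f(\Delta_P, 1+t)$ supplied by \cref{thm:f-polynomials}, obtaining $f(\Squelta_P, -1) = f(\Delta_P, 0)$. By definition $f(\Delta_P, t) = \sum_{i=0}^d f_{i-1}(\Delta_P)\, t^i$, so its constant term $f(\Delta_P, 0)$ is $f_{-1}(\Delta_P)$, which equals $1$: the empty face belongs to every simplicial complex. Hence $\chi(\Squelta_P) = 1$.

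There is essentially no obstacle here; the one point deserving a moment's care is keeping the two conventions straight — the cubical $f$-polynomial has $f_0$ as its constant term whereas the simplicial one has $f_{-1}$ — but this is precisely what makes the substitution $t = -1$ (equivalently $1+t = 0$) pick out $f_{-1}(\Delta_P) = 1$ rather than $f_0$. As an alternative one could avoid $f$-vectors altogether and quote the final sentence of \cref{thm:Gromov}, that every $\CAT(0)$ cubical complex is contractible and so has Euler characteristic $1$; I prefer the $f$-polynomial argument since it is self-contained relative to what has already been proven, and since \cref{dfn:Ardila-bijection} only asserted (without proof) that $\Squelta_P$ is $\CAT(0)$.
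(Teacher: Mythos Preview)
Your proof is correct and matches the paper's argument essentially line for line: substitute $t=-1$ into \cref{thm:f-polynomials} to get $\chi(\Squelta_P)=f(\Squelta_P,-1)=f(\Delta_P,0)=f_{-1}(\Delta_P)=1$. The paper likewise notes the alternative via contractibility from \cref{thm:Gromov} as a parenthetical remark.
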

\begin{proof}
$\chi(\Squelta_P) \coloneqq f(\Squelta_P, -1) = f(\Delta_P, 0) = f_{-1}(\Delta_P) = 1$. (Of course, \cref{thm:Gromov} says $\Squelta_P$ is contractible, so we already knew this.)
\end{proof}

\begin{crl}
The number of hyperplanes of $\Squelta_P$ is $\sum_{i=0}^d (-1)^{i-1} i f_{i}(\Squelta_P)$.
\end{crl}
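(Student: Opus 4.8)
The plan is to recognise the claimed sum as the derivative of $f(\Squelta_P, t)$ evaluated at $t = -1$, and to identify that derivative, via \cref{thm:f-polynomials}, with a single coefficient of $f(\Delta_P, t)$, namely $f_0(\Delta_P)$, which counts the hyperplanes.

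First I would observe that the number of hyperplanes of $\Squelta_P$ is exactly $\abs P$. Indeed, by \cref{thm:Ardila-et-al} the PIP recovered from the rooted complex $\Squelta_P$ is $P$ itself, and by the definition of $P_\Squelta$ its underlying set is the set of hyperplanes of $\Squelta_P$; meanwhile the crossing complex $\Delta_P$ has vertex set equal to the underlying set of $P$, so $\abs P = f_0(\Delta_P)$.

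Next I would extract $f_0(\Delta_P)$ from the $f$-polynomial. Writing $f(\Delta_P, s) = \sum_{j \geq 0} f_{j-1}(\Delta_P) s^j$, the coefficient $f_0(\Delta_P)$ is the coefficient of $s^1$, so $f_0(\Delta_P) = \left.\frac{d}{ds} f(\Delta_P, s)\right|_{s = 0}$. Now substitute $s = 1 + t$ in \cref{thm:f-polynomials}, giving $f(\Delta_P, s) = f(\Squelta_P, s - 1)$, and differentiate: $\left.\frac{d}{ds} f(\Delta_P, s)\right|_{s=0} = \left.\frac{d}{dt} f(\Squelta_P, t)\right|_{t = -1}$. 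Since $f(\Squelta_P, t) = \sum_{i=0}^d f_i(\Squelta_P) t^i$, we get $\frac{d}{dt} f(\Squelta_P, t) = \sum_{i=1}^d i f_i(\Squelta_P) t^{i-1}$, and evaluating at $t = -1$ yields $\sum_{i=1}^d (-1)^{i-1} i f_i(\Squelta_P)$, which equals $\sum_{i=0}^d (-1)^{i-1} i f_i(\Squelta_P)$ as the $i = 0$ term vanishes. Chaining the equalities gives the result.

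There is no real obstacle here; the only thing to be careful about is the bookkeeping between the two indexing conventions (the $s^j$ coefficient of $f(\Delta_P, s)$ is $f_{j-1}(\Delta_P)$, not $f_j(\Delta_P)$), and the harmless inclusion of the vanishing $i = 0$ term so that the stated formula reads cleanly. Alternatively one could avoid calculus entirely and argue purely with polynomial coefficients: expand $f(\Delta_P, 1 + t) = \sum_j f_{j-1}(\Delta_P)(1+t)^j$ and read off that the coefficient of $t^1$ on the right equals $\sum_j j\, f_{j-1}(\Delta_P) \cdot [j \geq 1]$-type contributions — but the single term surviving at $t=-1$ after differentiation makes the derivative phrasing the shortest.
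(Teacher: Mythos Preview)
Your proposal is correct and follows essentially the same approach as the paper: both identify the number of hyperplanes with $f_0(\Delta_P)$ and then extract it as the linear coefficient of $f(\Delta_P,t) = f(\Squelta_P,t-1)$. The only cosmetic difference is that the paper says ``coefficient of the linear term'' where you phrase the same extraction via differentiation at $t=-1$.
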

\begin{proof}
The number of hyperplanes is $\abs{P} = f_0(\Delta_P)$, which is the coefficient of the linear term in $f(\Delta_P, t) = f(\Squelta_P, t-1)$.
\end{proof}

But the biggest consequence of \cref{thm:f-polynomials} comes when we combine it with \cref{thm:all-flags-are-crossings}:
\begin{thm} \label{thm:f-vectors}
The following sets are equal:
\begin{gather*}
\{ p(t) : \text{$p$ is the $f$-polynomial of a $d$-dimensional $\CAT(0)$ cubical complex} \}, \\
\intertext{and}
\{ q(t+1) : \text{$q$ is the $f$-polynomial of a $(d-1)$-dimensional flag simplicial complex} \}.
\end{gather*}
In other words, the sets of $f$-vectors of $\CAT(0)$ cubical complexes and flag simplicial complexes are equal, up to the invertible linear transformation by the matrix $T$ whose $(i,j)$th entry is $\binom{j-1}{i-1}$. 
\end{thm}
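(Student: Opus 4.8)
The plan is to derive the set equality directly from the results already in hand: \cref{thm:f-polynomials} (which says $f(\Squelta_P,t)=f(\Delta_P,1+t)$), \cref{thm:all-flags-are-crossings} (every flag simplicial complex is $\Delta_P$ for some PIP $P$), and \cref{thm:dimensions} ($\dim\Squelta_P=\dim\Delta_P+1$), together with \citeauthor{art:Ardila-geodesics}'s bijection \cref{thm:Ardila-et-al}. I would prove the two inclusions separately, and then unwind the substitution $t\mapsto 1+t$ to recognise the matrix $T$.

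For the inclusion ``$\subseteq$'', I would start with an arbitrary $d$-dimensional $\CAT(0)$ cubical complex $\Squelta$ and set $p(t)=f(\Squelta,t)$. Since the $f$-vector of $\Squelta$ does not depend on any auxiliary data, I am free to choose a root vertex and pass to the PIP $P=P_\Squelta$ furnished by \cref{thm:Ardila-et-al}, so that $\Squelta_P=\Squelta$. Its crossing complex $\Delta_P$ is flag by construction (it is an anticlique complex), and by \cref{thm:dimensions} it has dimension exactly $d-1$. Then \cref{thm:f-polynomials} gives $p(t)=f(\Squelta_P,t)=f(\Delta_P,1+t)=q(t+1)$ for $q=f(\Delta_P,{\cdot})$, the $f$-polynomial of a $(d-1)$-dimensional flag simplicial complex; hence $p$ lies in the second set.

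For the inclusion ``$\supseteq$'', I would start with a $(d-1)$-dimensional flag simplicial complex $\Delta$ with $f$-polynomial $q$. By \cref{thm:all-flags-are-crossings} (equivalently by \cref{thm:Ardila-et-al}) there is a PIP $P$ with $\Delta_P=\Delta$ on the nose; by \cref{thm:dimensions} the complex $\Squelta_P$ then has dimension exactly $(d-1)+1=d$; and by \cref{thm:f-polynomials} we have $f(\Squelta_P,t)=f(\Delta_P,1+t)=q(1+t)$. Thus $q(t+1)$ is the $f$-polynomial of a $d$-dimensional $\CAT(0)$ cubical complex, so it lies in the first set. Combining the two inclusions gives the asserted equality of sets.

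To identify the linear map, I would write $q(t)=\sum_{j=0}^{d} f_{j-1}(\Delta)\,t^j$, expand $(1+t)^j=\sum_i\binom{j}{i}t^i$, and read off that the coefficient of $t^i$ in $q(1+t)$ equals $\sum_j\binom{j}{i}f_{j-1}(\Delta)$; after reindexing both $f$-vectors to be $1$-based this is precisely multiplication by the matrix $T$ with $(i,j)$ entry $\binom{j-1}{i-1}$. Since $\binom{j-1}{i-1}=0$ for $j<i$ while $\binom{i-1}{i-1}=1$, the matrix $T$ is upper triangular with unit diagonal (a Pascal matrix), hence invertible over $\mathbb Z$. I do not expect a genuine obstacle: all the substantive content lives in \cref{thm:f-polynomials,thm:all-flags-are-crossings,thm:dimensions}, and the only points requiring a little care are noting that the cubical $f$-polynomial is independent of the choice of root (so that \cref{thm:Ardila-et-al} may legitimately be invoked) and checking that the dimension bookkeeping in \cref{thm:dimensions} is an exact equality, not merely an inequality, in both directions.
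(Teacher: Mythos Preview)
Your proposal is correct and follows essentially the same approach as the paper: both directions are obtained by combining \cref{thm:Ardila-et-al}, \cref{thm:f-polynomials}, and \cref{thm:all-flags-are-crossings}, and the matrix $T$ is read off from expanding $(1+t)^j$. If anything, you are slightly more careful than the paper in explicitly invoking \cref{thm:dimensions} to match dimensions and in noting that $T$ is unit upper triangular.
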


\begin{proof}
If $p(t)$ is the $f$-polynomial of some $\CAT(0)$ cubical complex $\Squelta$, then \cref{thm:Ardila-et-al} says $\Squelta = \Squelta_P$ for some $P$ (after choosing a root arbitrarily), so $p(t+1)$ is the $f$-polynomial of $\Delta_P$. Conversely, if $q(s)$ is the $f$-polynomial of some flag simplicial complex $\Delta$, then \cref{thm:all-flags-are-crossings} says $\Delta = \Delta_{P'}$ for some $P'$, thus $q(s-1)$ is the $f$-polynomial of $\Squelta_{P'}$.

The equivalence of the statement about $f$-vectors comes from this equality:
\begin{align*}
\sum_{i=0}^d f_i(\Squelta_P) t^i & = \sum_{j=0}^d f_{j-1}(\Delta_P) (1+t)^j \\
& = \sum_{i=0}^d \sum_{j=0}^d \binom{j}{i} f_{j-1}(\Delta_P) t^i,
\end{align*}
noting that the indexing in $T$ is shifted by $1$.
\end{proof}

We will revisit $f$-polynomials in \cref{sec:balanced}.

\section{Combining $\CAT(0)$ complexes} \label{sec:combining}

In this section, we aim to illustrate how the crossing complex and Ardila et al.'s bijection may be used to study $\Squelta_P$, by examining some ways of building new complexes from old. We begin this section with three definitions.

First, a natural construction for combining cubical complexes is the product.

\begin{dfn}
Given two cubical complexes $\Squelta_1$ and $\Squelta_2$, their \emph{product} is the cubical complex $\Squelta_1 \times \Squelta_2$ whose vertices are pairs $(v_1,v_2)$ with $v_i$ a vertex of $\Squelta_i$ for $i = 1,2$, and whose faces are sets of the form $\sigma_1 \times \sigma_2$ for $\sigma_i \in \Squelta_i$.
\end{dfn}

Second, there is a construction for simplicial complexes called the join.

\begin{dfn}
If $\Delta_1$ and $\Delta_2$ are two simplicial complexes with vertex sets $V_1$ and $V_2$ respectively, their \emph{join}\footnote{Not to be confused with the join of two elements in a poset, $x \vee y$. We will not use this type of join in this paper.} is the simplicial complex $\Delta_1 * \Delta_2$, whose vertex set is $V_1 \sqcup V_2$ and whose faces are sets of the form $\sigma_1 \sqcup \sigma_2$, where $\sigma_i$ is a face of $\Delta_i$ for $i = 1,2$.
\end{dfn}

Third, here is a way of combining PIPs.

\begin{dfn}
If $P$ and $Q$ are two PIPs, define $P \conssqcup Q$ to be the PIP where:
\begin{itemize}
	\item the underlying set is $P \sqcup Q$, the disjoint union of $P$ and $Q$,
	\item if $p_1, p_2 \in P$, then the relations between $p_1$ and $p_2$ in $P \conssqcup Q$ are the same as in $P$, and similarly for $q_1, q_2 \in Q$, and
	\item if $p \in P$ and $q \in Q$, then $p$ and $q$ are incomparable and consistent in $P \conssqcup Q$.
\end{itemize}
In other words, $P \conssqcup Q$ is the PIP whose Hasse diagram is obtained by simply placing the Hasse diagrams for $P$ and $Q$ next to each other.
\end{dfn}


One may wonder what the connection between these constructions is --- the answer is in the following lemma (also observed by \citet[Lemma~2.5]{art:Caprace-Sageev} and \citet[Proposition~1.3]{art:Hagen-simplicial-boundary}):

\begin{lma} \label{thm:Squelta-product-Delta-join}
$\Squelta_P \times \Squelta_Q \cong \Squelta_{P \conssqcup Q}$, and $\Delta_{P \conssqcup Q} \cong \Delta_P * \Delta_Q$.
\end{lma}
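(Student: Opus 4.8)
The plan is to prove the two isomorphisms more or less independently, since each follows from unwinding the combinatorial definitions, and the main work is matching up the relevant posets/complexes via the dictionary of \cref{dfn:Ardila-bijection} and the definition of the crossing complex.

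For the second isomorphism, $\Delta_{P \conssqcup Q} \cong \Delta_P * \Delta_Q$, I would work directly with faces. The vertex set of $\Delta_{P \conssqcup Q}$ is $P \sqcup Q$, which matches the vertex set of the join. A face of $\Delta_{P \conssqcup Q}$ is a consistent antichain $A$ of $P \conssqcup Q$; write $A = A_P \sqcup A_Q$ with $A_P = A \cap P$ and $A_Q = A \cap Q$. The key observation is that in $P \conssqcup Q$ every element of $P$ is consistent with and incomparable to every element of $Q$, so $A$ is a consistent antichain if and only if $A_P$ is a consistent antichain of $P$ and $A_Q$ is a consistent antichain of $Q$ (the cross pairs impose no constraint). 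This is exactly the condition for $A_P \sqcup A_Q$ to be a face of $\Delta_P * \Delta_Q$, so the identity map on vertex sets induces a simplicial isomorphism.

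For the first isomorphism, $\Squelta_P \times \Squelta_Q \cong \Squelta_{P \conssqcup Q}$, I would first identify vertices. A consistent downset $I$ of $P \conssqcup Q$ splits uniquely as $I = I_P \sqcup I_Q$ with $I_P$ a consistent downset of $P$ and $I_Q$ one of $Q$ (again because the cross pairs are all consistent and there are no cross order relations, so downset-closure and consistency decouple); this gives a bijection between vertices of $\Squelta_{P \conssqcup Q}$ and pairs of vertices, i.e.\ vertices of $\Squelta_P \times \Squelta_Q$. Then I would check this bijection sends faces to faces: a face $C(I, M)$ of $\Squelta_{P \conssqcup Q}$ has $I = I_P \sqcup I_Q$ and $M \subseteq \max I = \max I_P \sqcup \max I_Q$, so $M = M_P \sqcup M_Q$ with $M_P \subseteq \max I_P$, $M_Q \subseteq \max I_Q$; since subsets $N \subseteq M$ correspond to pairs $(N_P, N_Q)$ with $N_P \subseteq M_P$, $N_Q \subseteq M_Q$, and $I \setminus N = (I_P \setminus N_P) \sqcup (I_Q \setminus N_Q)$, the vertex set of $C(I,M)$ maps exactly onto the product of the vertex sets of $C(I_P, M_P)$ and $C(I_Q, M_Q)$, which is a face of $\Squelta_P \times \Squelta_Q$. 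The converse inclusion is the same computation read backwards. Alternatively — and this may be cleaner to write — one can invoke \cref{thm:Ardila-et-al}: both sides are $\CAT(0)$ cubical complexes (the product of two $\CAT(0)$ cubical complexes is $\CAT(0)$, e.g.\ by \cref{thm:Gromov} since it is simply connected with flag vertex links), and one need only check that the PIP associated to $\Squelta_P \times \Squelta_Q$ (with the product root $(v_0, w_0)$) is $P \conssqcup Q$: hyperplanes of a product are hyperplanes of one factor crossed with the other factor, the halfspace $H^-$ behaves componentwise, and two hyperplanes coming from different factors always have disjoint-complement structure making them consistent and incomparable, which is precisely the recipe for $\conssqcup$.

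The main obstacle is bookkeeping rather than genuine difficulty: one must be careful that the decomposition $I = I_P \sqcup I_Q$ is \emph{forced} (not merely possible), which uses that $P \conssqcup Q$ has \emph{no} order relations or inconsistent pairs between $P$ and $Q$ — this is where the hypothesis really enters and where a sloppy argument could go wrong. I would also double check the edge cases ($I_P$ or $I_Q$ empty, $M$ empty) so the bijection is genuinely well-defined on all of $\Squelta_{P\conssqcup Q}$. Given the length of the surrounding material, I expect to present this fairly tersely, perhaps doing the vertex/face correspondence explicitly for $\Delta$ and then either mirroring it for $\Squelta$ or citing \cref{thm:Ardila-et-al} as above.
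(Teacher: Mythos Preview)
Your proposal is correct and your primary approach matches the paper's proof exactly: decompose consistent downsets (resp.\ antichains) of $P \conssqcup Q$ as $I_P \sqcup I_Q$ (resp.\ $A_P \sqcup A_Q$), observe that $\max(I_P \sqcup I_Q) = \max I_P \sqcup \max I_Q$, and read off the bijections on vertices and faces. The paper presents this very tersely (three sentences), so your level of detail on the face-level check and the edge cases is, if anything, more thorough than what appears there; your alternative route via \cref{thm:Ardila-et-al} and the hyperplane description of the product is not used in the paper, but it is a valid and pleasant second argument.
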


\begin{proof}
The consistent downsets of $P \conssqcup Q$ are sets of the form $I \sqcup J$, where $I$ and $J$ are consistent downsets in $P$ and $Q$ respectively, and the set of maximal elements of $I \sqcup J$ is $\max I \sqcup \max J$. Therefore, vertices of $\Squelta_{P \conssqcup Q}$ are in bijection with pairs $(I,J)$ with $I$ and $J$ as above, which are in bijection with vertices of $\Squelta_P \times \Squelta_Q$; and faces of $\Squelta_{P \conssqcup Q}$ are in bijection with tuples $(I, M, J, N)$ with $M \subseteq \max I$ and $N \subseteq \max J$, which are in bijection with faces of $\Squelta_P \times \Squelta_Q$.

Note that the root vertices of $\Squelta_P$ and $\Squelta_Q$ correspond to the empty consistent downset, and $\emptyset \sqcup \emptyset = \emptyset$, so this isomorphism respects roots.

The consistent antichains of $P \conssqcup Q$ are sets of the form $A \sqcup B$ with $A$ and $B$ consistent antichains of $P$ and $Q$ respectively; the statement about $\Delta_{P \conssqcup Q}$ and $\Delta_P * \Delta_Q$ follows immediately.
\end{proof}

Here are some more constructions.

\begin{dfn}
Suppose $K_1$ and $K_2$ are two complexes (both simplicial or both cubical, although the simplicial case will be more useful to us), with vertex sets $V_1$ and $V_2$ respectively. The disjoint union $K_1 \sqcup K_2$ of the complexes is defined to be the complex whose vertex set is $V_2 \sqcup V_2$ and whose face set is the set $K_1 \cup K_2$ (which is not quite a disjoint union if $K_1$ and $K_2$ are simplicial, since then they share the face $\emptyset$).
\end{dfn}

\begin{dfn}
Given two rooted cubical complexes $\Squelta_1$ and $\Squelta_2$, the wedge sum $\Squelta_1 \wedge \Squelta_2$ is the cubical complex obtained by taking the disjoint union of $\Squelta_1$ and $\Squelta_2$ and identifying their root vertices together to make a single new root vertex.
\end{dfn}

\begin{dfn}
Given two PIPs $P$ and $Q$, the PIP $P \inconssqcup Q$ is defined identically to $P \conssqcup Q$, except that each pair $p, q$ with $p \in P$ and $q \in Q$ is incomparable and \emph{inconsistent} in $P \inconssqcup Q$. The Hasse diagram of $P \inconssqcup Q$ is obtained by putting the Hasse diagrams of $P$ and $Q$ next to each other, then adding dotted lines from minimal element of $P$ to each minimal element of $Q$.
\end{dfn}

\begin{lma} \label{thm:Squelta-wedge-Delta-sqcup}
$\Squelta_P \wedge \Squelta_Q = \Squelta_{P \inconssqcup Q}$, and $\Delta_{P \inconssqcup Q} = \Delta_P \sqcup \Delta_Q$.
\end{lma}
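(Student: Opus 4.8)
The plan is to prove both equalities simultaneously by describing the consistent downsets and consistent antichains of $P \inconssqcup Q$ explicitly. The one fact that drives everything is that in $P \inconssqcup Q$ every element of $P$ is inconsistent with every element of $Q$. Consequently a consistent subset of $P \inconssqcup Q$ cannot contain both an element of $P$ and an element of $Q$, so it is contained entirely in $P$ or entirely in $Q$; the empty set is trivially contained in both, so it is the unique set lying on "both sides".

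For the crossing complex, the faces of $\Delta_{P \inconssqcup Q}$ are its consistent antichains. By the observation above each such antichain lies inside $P$ or inside $Q$, and since the order relations and inconsistencies internal to $P$ (respectively $Q$) are unchanged in $P \inconssqcup Q$, it is a consistent antichain of $P$ (respectively of $Q$); conversely any consistent antichain of $P$ or of $Q$ remains an antichain and remains consistent in $P \inconssqcup Q$. Hence the face set of $\Delta_{P \inconssqcup Q}$ is the union of the face sets of $\Delta_P$ and $\Delta_Q$, overlapping only in $\emptyset$, which is by definition $\Delta_P \sqcup \Delta_Q$.

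For the cubical complex, note that a downset of $P$ is automatically a downset of $P \inconssqcup Q$ (no new order relations are added), and it is consistent in $P \inconssqcup Q$ exactly when it is consistent in $P$; symmetrically for $Q$. Combined with the observation, the consistent downsets of $P \inconssqcup Q$ are precisely those of $P$ together with those of $Q$, glued along the common element $\emptyset$, which is the root of each of $\Squelta_P$ and $\Squelta_Q$. Moreover, if $I$ is a consistent downset contained in $P$, then $\max I \subseteq P$, so for any $M \subseteq \max I$ the cube $C(I,M) = \{ I \setminus N : N \subseteq M \}$ consists of consistent downsets of $P$ and is literally the corresponding face of $\Squelta_P$ (and likewise for $Q$). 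Thus every face of $\Squelta_{P \inconssqcup Q}$ is a face of $\Squelta_P$ or of $\Squelta_Q$, these two share only the $0$-cube $\{\emptyset\}$, and the identification is root-preserving; unwinding the definitions of disjoint union and wedge sum, this is exactly $\Squelta_P \wedge \Squelta_Q = \Squelta_{P \inconssqcup Q}$.

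There is no serious obstacle: the proof is a direct unwinding of the definitions. The only point requiring a little care is the bookkeeping around $\emptyset$ — it is the unique consistent subset lying on both sides, it is simultaneously the bottom of the downset lattice and the empty antichain/face, and it serves as the identified root vertex on the cubical side and as the shared empty face on the simplicial side. Once one confirms that this single shared element is precisely what the "wedge" and "disjoint union" constructions prescribe, both equalities follow.
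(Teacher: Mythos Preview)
Your proof is correct and follows essentially the same approach as the paper: both arguments rest on the observation that any consistent subset of $P \inconssqcup Q$ must lie entirely in $P$ or entirely in $Q$ (with only $\emptyset$ common to both), and then read off the consequences for consistent downsets and consistent antichains. Your write-up is slightly more explicit about why downsets and faces are preserved, but the underlying idea is identical.
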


\begin{proof}
There are three types of consistent downsets in $P \inconssqcup Q$: they are either the empty set, a non-empty consistent downset in $P$, or a non-empty consistent downset in $Q$. The vertices of $\Squelta_P \wedge \Squelta_Q$ are either the root vertex (corresponding to the empty downset), or a non-root vertex in $\Squelta_P$ or $\Squelta_Q$ (corresponding to a non-empty downset in $P$ or $Q$ respectively). The faces of $\Squelta_P \wedge \Squelta_Q$ are in bijection with either $(\emptyset, \emptyset)$ (the root vertex) or $(I,M)$ or $(J,N)$ where $I$ and $J$ are non-empty consistent downsets of $\Squelta_P$ and $\Squelta_Q$ respectively, and $M \subseteq \max I$ and $N \subseteq \max J$.

The consistent antichains of $P \inconssqcup Q$ are empty or a non-empty consistent antichain in $P$ or $Q$; the faces of $\Delta_P \sqcup \Delta_Q$ are either the empty face or a non-empty face of $\Delta_P$ or $\Delta_Q$.
\end{proof}

The next lemma concerns vertex links in $\CAT(0)$ cubical complexes. Links are easier to describe and often more useful in the simplicial case --- we will return to simplicial links in \cref{sec:hyperplanes} --- but for now, this lemma will be useful in proving \cref{thm:Squelta-cutfree-Delta-connected}.

\begin{figure}
\centering
\begin{tikzpicture}[pin distance=6pt]

\pgftransformscale{0.6}

\draw [dotted] (-2,-0.5) rectangle (8,5.5);

\draw [rounded corners=4pt] (0,0) -- (0,3) -- (6,3) -- (6,0);
\draw [rounded corners=6pt] (-0.2,-0.1) -- (-0.2,4.2) -- (2.4,4.2) -- (2.4,3.2) -- (6.2,3.2) -- (6.2,-0.1);
\draw [rounded corners=4pt, densely dotted] (0,3.2) -- (0,4) -- (2.2,4) -- (2.2,2.8) -- (4.2,2.8) -- (4.2,2) -- (2,2) -- (2,3.2) -- cycle;

\node [coordinate, pin=left:$I$] at (-0.2,1) {};
\node [coordinate, pin=right:$J$] at (0,0.5) {};
\node [coordinate, pin=right:$M$] at (4.2,2.4) {};
\node at (1.1,3.6) {$X$};
\node at (3.1,2.4) {$Y$};
\node at (7,4.5) {$P$};

\end{tikzpicture}
\caption{Illustration of the situation in \cref{thm:links-in-CAT(0)}} \label{fig:links-in-CAT(0)}
\end{figure}
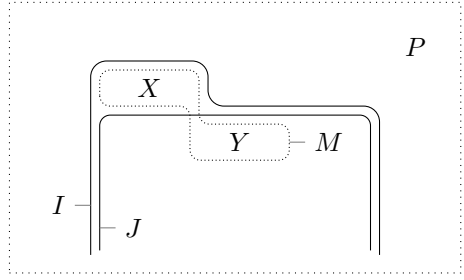

\begin{lma} \label{thm:links-in-CAT(0)}
Suppose a vertex $v$ of $\Squelta_P$ corresponds to the consistent downset $J \subseteq P$. Then the link of $v$ in $\Squelta_P$ is the crossing complex of the sub-PIP
\begin{equation*}
\max J \cup \min \{ x \in P \setminus J : \text{$x$ is consistent with all $j \in J$} \} \subseteq P
\end{equation*}
In particular, the link of the root vertex (where $J = \emptyset$) is the crossing complex of $\min P$.
\end{lma}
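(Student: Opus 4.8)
The plan is to unwind the definition of a cubical link and identify it, as a poset, with the face poset of the crossing complex $\Delta_Q$ of the displayed sub-PIP. Write $v=C(J,\emptyset)$ and let $Q$ be the sub-PIP of $P$ on the set $\max J\cup N_0$, where $N_0\coloneqq\min\{x\in P\setminus J:x\text{ is consistent with all }j\in J\}$, with order and inconsistency induced from $P$. Since $\link_{\Squelta_P}v=\{\rho\in\Squelta_P:\rho\supseteq v\}$ is, as a poset, isomorphic to a simplicial complex, it suffices to produce a poset isomorphism from it to the poset of consistent antichains of $Q$ (ordered by inclusion), which is the face poset of $\Delta_Q$.

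\textbf{Faces through $v$.} A face $C(I,M)$ contains $v$ exactly when $J=I\setminus N$ for some $N\subseteq M$; as $M\subseteq\max I$ this forces $J\subseteq I$ and $N\coloneqq I\setminus J\subseteq M\subseteq\max I$. I would then rephrase ``$I$ is a consistent downset with $I\setminus J\subseteq\max I$'' purely in terms of $J$ and $N$: the set $N$ is an antichain; $J\cup N$ is a downset iff every element strictly below an element of $N$ lies in $J$; and $J\cup N$ is consistent iff $N$ is consistent and every element of $N$ is consistent with every element of $J$. Because inconsistency is inherited upwards, an element $n$ satisfying these conditions and lying in $P\setminus J$ is automatically minimal in $P\setminus J$ (if $n'<n$ with $n'\notin J$, then $n'$ is either inconsistent with some $j\in J$, whence $n\incons j$, or consistent with all of $J$, contradicting $n\in N_0$); thus $N_0=\{x\in\min(P\setminus J):x\text{ consistent with all }j\in J\}$ and $N\subseteq N_0\subseteq Q$. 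In summary, the faces of $\Squelta_P$ containing $v$ are exactly the $C(I,M)$ with $I=J\cup N$ for a consistent antichain $N$ of $Q$ contained in $N_0$ and $N\subseteq M\subseteq\max I$.

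\textbf{The isomorphism.} I claim $C(I,M)\mapsto M$ is the desired bijection. The set $M$ is a consistent antichain of $P$ (it lies in $\max I$ and in the consistent set $I$), and splitting $M=(M\setminus J)\sqcup(M\cap J)$ we have $M\setminus J=N\subseteq N_0\subseteq Q$ while $M\cap J\subseteq\max J\subseteq Q$ (an element of $M\cap J$ lies in $\max I$, hence in $\max J$), so $M$ is a consistent antichain of $Q$. Injectivity is clear, since $M$ determines $N=M\setminus J$ and hence $I=J\cup N$. For surjectivity, given a consistent antichain $A$ of $Q$, put $N=A\setminus J$, $I=J\cup N$, $M=A$: then $I$ is a consistent downset (a downset because $N\subseteq N_0\subseteq\min(P\setminus J)$, consistent because $J$ and $A$ are consistent and each element of $N\subseteq N_0$ is consistent with all of $J$), and $M=A\subseteq\max I$ — elements of $N$ are maximal in $I$, and each $a\in A\cap J\subseteq\max J$ stays maximal in $I$ because no element of $N\subseteq A$ can lie above it, $A$ being an antichain. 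Finally, for faces through $v$ one has $I=J\cup(M\setminus J)$, so $I$ is recovered from $M$; using the cube structure of the faces $C(I,M)$, a face $C(I_1,M_1)$ through $v$ is a subface of $C(I_2,M_2)$ iff $M_1\subseteq M_2$ (the fixed coordinates realizing the inclusion are $I_2\setminus I_1=(M_2\setminus M_1)\setminus J\subseteq M_2\setminus M_1$). Hence $C(I,M)\mapsto M$ is a poset isomorphism, so $\link_{\Squelta_P}v=\Delta_Q$.

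\textbf{Special case and main difficulty.} Taking $J=\emptyset$ gives $\max J=\emptyset$ and a vacuous consistency condition, so the sub-PIP is $\min P$ and the link of the root is $\Delta_{\min P}$. I expect the only step needing genuine care to be the maximality claim inside the surjectivity argument: one must check that adjoining the new minimal elements $N=A\setminus J$ to $J$ does not destroy the maximality in $J\cup N$ of the old maximal elements $A\cap J$. This is precisely where the antichain hypothesis on $A$ is used — it is the reason the link is governed by consistent \emph{antichains} rather than arbitrary consistent subsets — while everything else is routine bookkeeping with downsets and the upward-inherited inconsistency relation.
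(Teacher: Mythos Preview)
Your proof is correct and follows essentially the same approach as the paper: both identify the faces $C(I,M)$ containing $v$ via the decomposition $M=(M\setminus J)\sqcup(M\cap J)$ (the paper calls these pieces $X$ and $Y$), and both verify that this yields exactly the consistent antichains of the displayed sub-PIP. Your write-up is in fact slightly more thorough than the paper's, since you explicitly check that the bijection $C(I,M)\mapsto M$ is order-preserving, whereas the paper leaves the poset structure implicit; the only rough spot is the parenthetical around ``contradicting $n\in N_0$'', where the notation momentarily blurs whether $n$ is a generic element of $N$ or of $N_0$, but the intended argument (that minimality in $\{x\in P\setminus J:\text{$x$ consistent with $J$}\}$ coincides with minimality in $P\setminus J$, by upward inheritance of inconsistency) is sound.
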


Note that while no two elements of $\min P$ are ever comparable, they may be inconsistent.

\begin{proof}
Recall from \cref{dfn:Ardila-bijection} that the faces of $\Squelta_P$ are in bijection with pairs $(I,M)$ where $I \subseteq P$ is a consistent downset and $M \subseteq \max I$. By definition, a face $C(I,M)$ in $\Squelta_P$ contains $v$ if and only if $I \setminus X = J$ for some $X \subseteq M$ --- see \cref{fig:links-in-CAT(0)}.

If we define $Y = M \setminus X$, so $M = X \sqcup Y$ and $I = J \sqcup X$, then the pair $(I,M)$ is determined by the choice of the pair $(X,Y)$, and vice versa. There are some restrictions on what the sets $X$ and $Y$ may be --- specifically, the conditions are as follows:
\begin{itemize}
\item $X \sqcup Y$ must be a consistent antichain (since $X \sqcup Y = M$);
\item $Y$ must be a subset of $\max J$ (since $Y \subseteq J$ and $Y \subseteq M \subseteq \max I$, and any element of $J$ that is maximal in $I$ must also be maximal in the subset $J$);
\item every element of $X$ must be consistent with every $j \in J$ (since $J \sqcup X = I$ is consistent); and
\item $X$ must be a subset of $\min (P \setminus J)$ (since $X \subseteq P \setminus J$, and the facts that $X$ is an antichain and $J \sqcup X$ is a downset mean that any $y \in P$ with $y < x$ for some $x \in X$ must be in $J$), thus elements of $X$ are in fact minimal in the subset $\{x \in P \setminus J : \text{$x$ is consistent with all $j \in J$} \}$.
\end{itemize}
Conversely, if $X$ and $Y$ satisfy these conditions, then $J \sqcup X$ is a consistent downset and $X \sqcup Y \subseteq \max (J \sqcup X)$. In other words, the faces of the link of $v$ are in bijection with pairs $(X,Y)$ such that $X \sqcup Y$ is a consistent antichain with $Y \subseteq \max J$ and $X \subseteq \min \{ x \in P \setminus J : \text{$x$ is consistent with all $j \in J$} \}$.
\end{proof} 

With this lemma in hand, we can now give a combinatorial proof of the following result (which was also observed by \citet[Lemma~4.10]{art:Hagen-simplicial-boundary}).

\begin{prop} \label{thm:Squelta-cutfree-Delta-connected}
$\Squelta_P$ has a cut vertex (i.e.\ a vertex $v$ where $\norm{\Squelta_P} \setminus v$ is disconnected) if and only if $\Delta_P$ is disconnected.
\end{prop}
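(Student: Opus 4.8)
The plan is to prove the two implications separately, with different tools. For ``$\Squelta_P$ has a cut vertex'' $\Rightarrow$ ``$\Delta_P$ is disconnected'' I would argue directly with hyperplanes; for the converse I would combine \cref{thm:links-in-CAT(0)} with \cref{thm:Gromov}. I may assume $P \neq \emptyset$, since otherwise $\Squelta_P$ is a point and $\Delta_P$ is empty, so both conditions fail.

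\paragraph{Forward direction.}
Suppose $v$ is a cut vertex and write $\norm{\Squelta_P} \setminus \{v\} = C \sqcup D$ with $C, D$ nonempty, open and disjoint. I would lean on two facts: every hyperplane of $\Squelta_P$ is a connected subset containing no vertex (each midplane is a slice $\{x_j = \frac{1}{2}\}$, which contains no vertex of its cube), and two hyperplanes are joined by an edge of $\Delta_P$ precisely when they intersect in $\Squelta_P$. The first forces each hyperplane to lie entirely in $C$ or entirely in $D$, giving a partition $P = P_C \sqcup P_D$ of the hyperplane set; and if $h \in P_C$ crossed $h' \in P_D$ then $\emptyset \neq h \cap h' \subseteq C \cap D = \emptyset$, so by the second fact there is no edge of $\Delta_P$ between $P_C$ and $P_D$. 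It then remains to see $P_C, P_D \neq \emptyset$: a cut vertex forces $\Squelta_P$ to have an edge, so a nonempty open set like $C$ contains $\sigma \setminus \{v\}$ for some positive-dimensional cube $\sigma$ (pick $\sigma \ni v$ with $\sigma \setminus \{v\}$ meeting $C$, which exists since $v \in \overline C$; a cube minus a point stays connected), hence contains a midplane of $\sigma$ and so a whole hyperplane, and similarly for $D$. Thus $\Delta_P$ is disconnected.

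\paragraph{Converse direction.}
For the converse, it suffices to exhibit a vertex $v$ of $\Squelta_P$ whose cubical link $\link_{\Squelta_P} v$ is disconnected: $\Squelta_P$ is simply connected by \cref{thm:Gromov}, and in a simply connected complex a vertex with disconnected link is a cut vertex (a standard consequence of van Kampen's theorem, decomposing $\norm{\Squelta_P}$ near $v$ into a cone on $\link v$ and the complement of a smaller such cone). By \cref{thm:links-in-CAT(0)} every such link is the crossing complex $\Delta_R$ of a sub-PIP $R \subseteq P$ with inherited order and inconsistency, and $\Delta_R$ is then exactly the induced subcomplex of $\Delta_P$ on the vertex set $R$; so $\Delta_R$ is disconnected as soon as $R$ meets two distinct connected components of $\Delta_P$. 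If the root's link $\Delta_{\min P}$ is already disconnected we are done; otherwise $\min P$ lies in a single component $C_1$ of $\Delta_P$, and since $\Delta_P$ is disconnected the set $P \setminus C_1$ is nonempty. I would pick $b_0$ minimal in the induced subposet on $P \setminus C_1$ and set $J \coloneqq \{y \in P : y < b_0\}$; this is a consistent downset (clearly a downset, and consistent because $\downset\{b_0\}$ is, since an inconsistent pair below $b_0$ would give $b_0 \incons b_0$), hence a vertex $v_J$, and \cref{thm:links-in-CAT(0)} gives $\link_{\Squelta_P} v_J = \Delta_R$ with $R = \max J \cup \min\{x \in P \setminus J : x \text{ consistent with all of } J\}$. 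By minimality of $b_0$ everything below $b_0$ lies in $C_1$, so $\emptyset \neq \max J \subseteq C_1$; and $b_0$ lies in the second set, because $b_0 \notin J$, because $J \cup \{b_0\} = \downset\{b_0\}$ is consistent, and because everything strictly below $b_0$ is already in $J$ (so $b_0$ is minimal among such $x$). Hence $R$ meets both $C_1$ and the component of $b_0 \in P \setminus C_1$, so $\link_{\Squelta_P} v_J$ is disconnected.

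\paragraph{Main obstacle.}
The step I expect to be hardest is the topological input used in the converse: that a vertex with disconnected link in a simply connected complex is genuinely a cut vertex. A disconnected link obviously separates a punctured neighbourhood of the vertex into pieces; the real content is that these pieces cannot be rejoined elsewhere, and this is exactly where simple connectivity (hence \cref{thm:Gromov}) is needed. Making the van Kampen bookkeeping honest --- the version with disconnected intersection, a ``graph of pieces'', and ruling out cycles in it --- is the delicate part. A lesser technical point is carefully justifying, in the forward direction, that each component of $\norm{\Squelta_P}\setminus\{v\}$ contains a whole hyperplane; everything else (that $J$ is a consistent downset, that $b_0$ lands in the prescribed minimal set) is routine manipulation with the PIP axioms.
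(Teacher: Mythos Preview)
Your argument is correct, but it takes a different route from the paper's primary proof. For the forward implication, the paper re-roots at the cut vertex, observes that $\Squelta_P$ is then a wedge sum, and invokes \cref{thm:Squelta-wedge-Delta-sqcup} to conclude that $\Delta_P$ is a disjoint union; you instead argue directly that each hyperplane, being connected and vertex-free, must lie entirely in one component of $\norm{\Squelta_P}\setminus\{v\}$, so crossing hyperplanes share a component. For the converse, both you and the paper construct essentially the same vertex (with downset $(\downset b_0)\setminus\{b_0\}$) and use \cref{thm:links-in-CAT(0)} to check that its link is disconnected; but from there the paper re-roots at that vertex, analyses the new PIP $P'$, and shows combinatorially that $P'$ splits as an $\inconssqcup$ so that $\Squelta_{P'}$ is literally a wedge, whereas you invoke simple connectivity (via \cref{thm:Gromov}) and a van Kampen/Mayer--Vietoris style argument to pass from ``disconnected link'' to ``cut vertex''. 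Your approach is more topological and sidesteps the re-rooting manoeuvre; the paper's main proof is purely combinatorial and delivers the slightly stronger structural conclusion that the complex genuinely decomposes as a wedge. (The paper does also give a second, topological proof in \cref{sec:topology}, via the homotopy equivalence $\Delta_P \simeq \norm{\Squelta_P}\setminus\norm{V(\Squelta_P)}$, which is closer in spirit to your line of reasoning.)
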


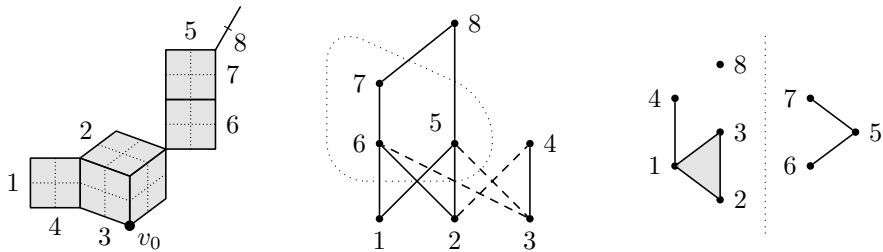
\begin{figure}
\centering
\begin{subfigure}{0.3\textwidth}
\centering
\begin{tikzpicture}[3D]

\pgftransformscale{0.6}

\filldraw [face] (0cm,0cm) rectangle (1.1cm,1.1cm);
\filldraw [face] (0cm,1.1cm) rectangle (1.1cm,2.2cm);

\draw [hyperpoint] (1.1cm,2.2cm) -- ++(60:1.1cm) node [pos=0.5, label={[label distance=-7pt]below right:$8$}] {};

\filldraw [face] (0,0,0) -- (-1,0,0) -- (-1,0,-1) -- (0,0,-1) -- cycle;
\filldraw [face] (0,0,0) -- (-1,0,0) -- (-1,1,0) -- (0,1,0) -- cycle;
\filldraw [face] (-1,0,0) -- (-1,1,0) -- (-1,1,-1) -- (-1,0,-1) -- cycle;

\filldraw [face] (-1,1,0) rectangle ++(-1.1cm,-1.1cm);

\draw [hyperplane] (0.55cm,0cm) -- (0.55cm,2.2cm) node [label={[label distance=-3pt]above:$5$}] {};
\draw [hyperplane] (0cm,0.55cm) -- (1.1cm,0.55cm) node [label={[label distance=-3pt]right:$6$}] {};
\draw [hyperplane] (0cm,1.65cm) -- (1.1cm,1.65cm) node [label={[label distance=-3pt]right:$7$}] {};
\draw [hyperplane] (0,0.5,0) -- (-1,0.5,0) -- (-1,0.5,-1) node [label={[label distance=-3pt]below:$3$}] {};
\draw [hyperplane] (-0.5,1,0) -- node [pos=0, label={[label distance=-7pt]above left:$2$}] {} (-0.5,0,0) -- (-0.5,0,-1);
\draw [hyperplane] (0,0,-0.5) -- (-1,0,-0.5) -- (-1,1,-0.5) -- ++(-1.1cm,0) node [label={[label distance=-3pt]left:$1$}] {};
\draw [hyperplane] ($(-1,1,0)+(-0.55cm,0)$) -- ++(0cm,-1.1cm) node [label={[label distance=-3pt]below:$4$}] {};

\node [vertex, inner sep=1.4pt, label={[label distance=-3pt]below right:$v_0$}] at (-1,0,-1) {};

\end{tikzpicture}
\end{subfigure}
\hfill
\begin{subfigure}{0.3\textwidth}
\centering
\begin{tikzpicture}

\node [vertex, label=below:$1$] (v1) at (0,0) {};
\node [vertex, label=below:$2$] (v2) at (1,0) {};
\node [vertex, label=below:$3$] (v3) at (2,0) {};
\node [vertex, label=right:$4$] (v4) at (2,1) {};
\node [vertex, label=above left:$5$] (v5) at (1,1) {};
\node [vertex, label=left:$6$] (v6) at (0,1) {};
\node [vertex, label=left:$7$] (v7) at (0,1.8) {};
\node [vertex, label=right:$8$] (v8) at (1,2.6) {};

\draw [edge] (v1) -- (v6) -- (v7) -- (v8) -- (v5) -- (v1) (v5) -- (v2) -- (v6) (v3) -- (v4);
\draw [inconsistent] (v2) -- (v4) (v5) -- (v3) -- (v6);

\draw [dotted, rounded corners=15pt] (-0.7,0.5) -- (1.5,0.5) -- (1.5,1.6) -- (-0.7,2.6) -- cycle;

\end{tikzpicture}
\end{subfigure}
\hfill
\begin{subfigure}{0.3\textwidth}
\centering
\begin{tikzpicture}

\pgftransformyscale{0.9}
\pgftransformxscale{0.6}

\node (v1) [vertex, label=left:$1$] at (0,0) {};
\node (v2) [vertex, label=right:$2$] at (1,-0.5) {};
\node (v3) [vertex, label=right:$3$] at (1,0.5) {};
\node (v4) [vertex, label=left:$4$] at (0,1) {};

\node (v8) [vertex, label=right:$8$] at (1,1.5) {};

\draw [dotted] (2,-1) -- (2,2);

\node (v5) [vertex, label=right:$5$] at (4,0.5) {};
\node (v6) [vertex, label=left:$6$] at (3,0) {};
\node (v7) [vertex, label=left:$7$] at (3,1) {};

\filldraw [face] (v1.center) -- (v2.center) -- (v3.center) -- cycle;
\draw [edge] (v1) -- (v4) (v6) -- (v5) -- (v7);

\end{tikzpicture}
\end{subfigure}
\caption{A $\CAT(0)$ cubical complex with a cut vertex, its corresponding PIP, and its crossing complex. Dotted lines show a disconnection of the crossing complex.} \label{fig:cut-vertex}
\end{figure}

\begin{proof}
One direction follows immediately from the work above: if $\Squelta_P$ has a cut vertex, then it is then a wedge sum of its two halves, and \cref{thm:Squelta-wedge-Delta-sqcup} says $\Delta_P$ is thus a disjoint union.

The other direction is less straightforward. The first part of this proof went smoothly because we could implicitly assume in \cref{thm:Squelta-wedge-Delta-sqcup} that the cut vertex is the root vertex of $\Squelta_P$, since $\Delta_P$ does not depend on the choice of root; for the other direction of the proof, the cut vertex might be any vertex. The idea for the rest of the proof is to find the potential cut vertex in $\Squelta_P$, set it to be the new root vertex, and then argue that it is indeed a cut vertex.

Suppose $\Delta_P$ is disconnected. This means we can partition the vertices of $\Delta_P$ into two non-empty sets, $A$ and $B$, with no edges of $\Delta_P$ between the two sets; in terms of $P$, this means that every pair $a, b$ with $a \in A$ and $b \in B$ is either inconsistent or comparable. If all such pairs are inconsistent, we can appeal to \cref{thm:Squelta-wedge-Delta-sqcup} again to conclude that $\Squelta_P$ is a wedge sum; our next goal is thus to reduce to this case.

If not all pairs $a, b$ are inconsistent, then some pair is comparable: without loss of generality, suppose $a_0 < b_0$ for some $a_0 \in A$ and $b_0 \in B$. By increasing $a_0 \in A$ and decreasing $b_0 \in B$ if necessary, we may even assume that $a_0 < b_0$ is a covering relation --- that is, that there is no $x \in P$ with $a_0 < x < b_0$. Now, consider the downset $I \coloneqq (\downset b_0) \setminus b_0$, which is consistent since it is a subset of $\downset b_0$, and let $v$ be the corresponding vertex in $\Squelta_P$ --- we will argue that $v$ is a cut vertex.

Observe the following:
\begin{itemize}
\item $a_0$ is a maximal element in $I$, since $b_0$ covers $a_0$;
\item $b_0$ is consistent with all elements of $I$; and
\item $b_0$ is minimal in $P \setminus I$, and thus in $\{ x \in P \setminus I : \text{$x$ is consistent with all $i \in I$} \}$.
\end{itemize}
Therefore, according to \cref{thm:links-in-CAT(0)}, both $a_0$ and $b_0$ are vertices of the link of $v$. Thus the vertices of $\link v$ can be partitioned into two \emph{non-empty} sets, $A \cap \link v$ and $B \cap \link v$, with no edges between them, so $\link v$ is disconnected.

Now, choose $v$ to be the new root vertex of $\Squelta_P$. This new rooted $\CAT(0)$ complex will correspond to a different PIP, say $P'$, but the geometry of $\Squelta_{P'}$ is unchanged --- in particular, 
the crossing complex $\Delta_{P'}$ is identical to $\Delta_P$, and the link of $v$ in $\Squelta_{P'}$ is still disconnected by some partition $A'$ and $B'$.

But now, since $v$ is the root vertex of $\Squelta_{P'}$, \cref{thm:links-in-CAT(0)} says that $\link_{\Squelta_{P'}} v = \Delta_{\min P'}$. No two elements of $\min P'$ can be comparable, so all elements of $A'$ must instead be inconsistent with all elements of $B'$.

Every element $x' \in P'$ must be greater than or equal to some element of $\min P'$, but it cannot be the case that both $x' \geq a'$ and $x' \geq b'$ for some $a' \in A'$ and $b' \in B'$: if this were the case, the fact that $a' \incons b'$ together with the upward-inheriting property of inconsistent pairs would imply that $x' \incons x'$, which is forbidden. Therefore, the sets $\upset A'$ and $\upset B'$ form a setwise partition of $P'$. Moreover, if $x' \in \upset A'$ and $y' \in \upset B'$, then $x' \geq a'$ and $y' \geq b'$ for some $a' \in A'$ and $b' \in B'$, so since $a' \incons b'$, we must also have $x' \incons y'$; thus $P'$ is the poset $A' \inconssqcup B'$. But \cref{thm:Squelta-wedge-Delta-sqcup} then implies that $\Squelta_{P'}$ is a wedge sum, so $\Squelta_P$ has a cut vertex.
\end{proof}

This proof used a lot of facts about posets --- we will see an alternative, topological way to prove this result in \cref{sec:topology}.

\section{Hyperplanes} \label{sec:hyperplanes}

In this section, we will take a closer look at hyperplanes in $\CAT(0)$ cubical complexes, through the lens of the \emph{derivative complex} defined by \citet{art:Babson-Chan}. This construction is heavily based on the poset structure of a cubical complex (where the faces are ordered by inclusion), so in order to use this construction, we must first say more about the poset structure of $\Squelta_P$.

\begin{lma} \label{thm:Squelta-poset}
$C(I',M') \subseteq C(I,M)$ if and only if $M' \subseteq M$ and $I \setminus M \subseteq I' \subseteq I$.
\end{lma}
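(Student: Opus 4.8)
The plan is to treat this as a purely set-theoretic statement about vertex sets. Recall from \cref{dfn:Ardila-bijection} that $C(I,M) = \{ I \setminus N : N \subseteq M \}$, and that $C(I',M') \subseteq C(I,M)$ means precisely that every vertex of the former face is a vertex of the latter. The one preliminary observation I would record first is that, since $M \subseteq \max I \subseteq I$, the expression $I \setminus N$ with $N \subseteq M$ determines $N$ uniquely (take complements inside $I$); this lets me pass freely back and forth between a vertex of $C(I,M)$ and the subset of $M$ producing it.

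For the ``if'' direction, assume $M' \subseteq M$ and $I \setminus M \subseteq I' \subseteq I$. First note $I \setminus I' \subseteq M$: from $I \setminus M \subseteq I'$ we get $I \setminus I' \subseteq I \setminus (I \setminus M) = I \cap M = M$. Now, given any $N' \subseteq M'$, set $N \coloneqq (I \setminus I') \cup N'$; then $N \subseteq M$ since both pieces lie in $M$, and a one-line check (using $I' \subseteq I$, so that for $x \in I$ one has $x \notin I \setminus I' \iff x \in I'$) shows $I \setminus N = I' \setminus N'$. Hence every vertex of $C(I',M')$ is a vertex of $C(I,M)$, giving the containment.

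For the ``only if'' direction, assume $C(I',M') \subseteq C(I,M)$. Taking the vertex $I' = I' \setminus \emptyset$, we get $I' = I \setminus N_0$ for a unique $N_0 \subseteq M$; this already yields $I' \subseteq I$ and $I \setminus M \subseteq I \setminus N_0 = I'$, which is the second condition. To obtain $M' \subseteq M$, fix $m' \in M'$; then $I' \setminus \{m'\}$ is also a vertex of $C(I',M')$, hence equals $I \setminus N$ for some $N \subseteq M$. Comparing the two representations, the symmetric difference of the vertices $I'$ and $I' \setminus \{m'\}$ equals $\{m'\}$ on one hand and $N_0 \triangle N$ on the other (all the subtractions happening inside $I$, as $N_0, N \subseteq M \subseteq I$), and $N_0 \triangle N \subseteq M$; therefore $m' \in M$, so $M' \subseteq M$.

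None of this is deep: once one recalls that $C(I,M)$ is literally the vertex set $\{I \setminus N : N \subseteq M\}$, it is a few lines of elementary set manipulation. The only mildly delicate point is the ``$M' \subseteq M$'' half of the converse, where one must squeeze information about $M'$ — not merely about $I'$ — out of the inclusion of faces; the symmetric-difference computation above is the natural tool, and it leans on the uniqueness-of-representation remark together with $M \subseteq I$. I do not anticipate any real obstacle beyond keeping the set-difference bookkeeping straight.
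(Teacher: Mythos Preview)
Your proof is correct and proceeds by essentially the same direct set-theoretic unwinding of the definition as the paper's proof. The one minor difference in execution is that the paper observes $C(I,M)$ is exactly the interval $\{J : I \setminus M \subseteq J \subseteq I\}$ and so reduces the containment check to the two extreme vertices $I'$ and $I' \setminus M'$, whereas you construct the witnessing $N$ explicitly in the ``if'' direction and use a symmetric-difference argument for $M' \subseteq M$; both routes are short and equivalent in spirit.
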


\begin{proof}
By definition, $C(I',M') \subseteq C(I,M)$ if and only if
\begin{align*}
\{ I' \setminus N' : N' \subseteq M' \} & \subseteq \{ I \setminus N : N \subseteq M \}. \\
\intertext{This is true if and only if it is true for the smallest and largest possible choices for $N'$, namely $N' = \emptyset$ and $N' = M'$; therefore, $C(I',M') \subseteq C(I,M)$ if and only if}
\{ I', I' \setminus M' \} & \subseteq \{I \setminus N : N \subseteq M \}.
\end{align*}

Let $S$ denote $\{I \setminus N : N \subseteq M \}$, for conciseness. The set $I'$ is an element of $S$ precisely when $I \setminus M \subseteq I' \subseteq I$. Similarly, $I' \setminus M'$ is an element of $S$ precisely when $I \setminus M \subseteq I' \setminus M' \subseteq I$; if we assume that $I' \in S$ already, then $I' \setminus M' \in S$ if and only if $M' \subseteq M$.
\end{proof}

\begin{lma} \label{thm:Squelta-meet}
The cubes $C(I_1,M_1)$ and $C(I_2,M_2)$ have a meet if and only if $(I_1 \setminus M_1) \cup (I_2 \setminus M_2) \subseteq I_1 \cap I_2$; if the meet exists, it is
\begin{equation*}
C(I_1,M_1) \cap C(I_2,M_2) = C(I_1 \cap I_2, M_1 \cap M_2).
\end{equation*}
\end{lma}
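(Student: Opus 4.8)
The plan is to reduce everything to Lemma~\ref{thm:Squelta-poset}. First I would record when a single vertex lies in a cube: applying Lemma~\ref{thm:Squelta-poset} to the inclusion $C(J,\emptyset) \subseteq C(I,M)$, where $C(J,\emptyset) = \{J\}$ is the $0$-cube at the vertex $J$, gives at once that $J \in C(I,M)$ if and only if $I \setminus M \subseteq J \subseteq I$. Intersecting the two conditions, a vertex $J$ lies in $C(I_1,M_1) \cap C(I_2,M_2)$ precisely when $(I_1 \setminus M_1) \cup (I_2 \setminus M_2) \subseteq J \subseteq I_1 \cap I_2$, so the vertex set $C(I_1,M_1) \cap C(I_2,M_2)$ is non-empty exactly when $(I_1 \setminus M_1) \cup (I_2 \setminus M_2) \subseteq I_1 \cap I_2$.

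Next I would connect ``non-empty intersection'' with ``having a meet''. By the cubical complex axioms, the intersection of two faces is either empty or again a face; and when it is a non-empty face it is automatically their meet in the face poset, since any face contained in both is, as a set of vertices, contained in the intersection. Conversely, if $C(I_1,M_1)$ and $C(I_2,M_2)$ have a meet, that meet is a common lower bound, so its (non-empty) vertex set lies inside $C(I_1,M_1) \cap C(I_2,M_2)$, forcing the intersection to be non-empty. Combined with the previous paragraph, the two cubes have a meet if and only if $(I_1 \setminus M_1) \cup (I_2 \setminus M_2) \subseteq I_1 \cap I_2$.

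It then remains, under this hypothesis, to identify the meet as $C(I_1 \cap I_2, M_1 \cap M_2)$. I would first check that this names an actual face: $I_1 \cap I_2$, being an intersection of consistent downsets, is a consistent downset, and each element of $M_1 \cap M_2$ is maximal in $I_1$ (hence maximal in the smaller set $I_1 \cap I_2$), so $M_1 \cap M_2 \subseteq \max(I_1 \cap I_2)$. By the vertex criterion from the first paragraph, $C(I_1 \cap I_2, M_1 \cap M_2)$ consists of the vertices $J$ with $(I_1 \cap I_2) \setminus (M_1 \cap M_2) \subseteq J \subseteq I_1 \cap I_2$, so it suffices to prove the set identity $(I_1 \cap I_2) \setminus (M_1 \cap M_2) = (I_1 \setminus M_1) \cup (I_2 \setminus M_2)$ under the hypothesis. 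The inclusion ``$\subseteq$'' is immediate (if $x \in I_1 \cap I_2$ and $x \notin M_1 \cap M_2$, then $x$ misses $M_1$ or misses $M_2$), and ``$\supseteq$'' uses the hypothesis: if $x \in I_1 \setminus M_1$ then $x \in I_1 \cap I_2$ and $x \notin M_1$, hence $x \notin M_1 \cap M_2$, and symmetrically for $I_2 \setminus M_2$. This yields $C(I_1,M_1) \cap C(I_2,M_2) = C(I_1 \cap I_2, M_1 \cap M_2)$ on the nose.

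I do not expect a genuine obstacle here; the computation is entirely elementary set manipulation. The two points that warrant care are the general fact that the poset-meet of two cubes coincides with their vertex-set intersection whenever that intersection is non-empty, and the verification that $M_1 \cap M_2 \subseteq \max(I_1 \cap I_2)$, which is exactly what guarantees that $C(I_1 \cap I_2, M_1 \cap M_2)$ is a legitimate face in the first place.
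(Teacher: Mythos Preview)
Your proof is correct. Both you and the paper reduce to Lemma~\ref{thm:Squelta-poset}, but the arguments are organised differently. The paper works entirely in the face poset: it parametrises all common lower bounds $C(J,N)$ of $C(I_1,M_1)$ and $C(I_2,M_2)$ via Lemma~\ref{thm:Squelta-poset}, obtains the constraints $N \subseteq M_1 \cap M_2$ and $(I_1 \setminus M_1) \cup (I_2 \setminus M_2) \subseteq J \subseteq I_1 \cap I_2$, and then reads off both the existence criterion and the maximum $(J,N) = (I_1 \cap I_2, M_1 \cap M_2)$ at once. You instead work at the level of vertex sets: you invoke the cubical-complex axiom that an intersection of faces is empty or a face, compute the set $C(I_1,M_1) \cap C(I_2,M_2)$ directly as an interval of downsets, and then identify it with $C(I_1 \cap I_2, M_1 \cap M_2)$ via the set identity $(I_1 \cap I_2) \setminus (M_1 \cap M_2) = (I_1 \setminus M_1) \cup (I_2 \setminus M_2)$. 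Your route has the advantage of explicitly verifying that $(I_1 \cap I_2, M_1 \cap M_2)$ names a legitimate face (checking $M_1 \cap M_2 \subseteq \max(I_1 \cap I_2)$), which the paper leaves implicit; the paper's route has the advantage of never needing the intersection axiom, since the meet is located purely by maximising over lower bounds.
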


\begin{proof}
The meet of $C(I_1,M_1)$ and $C(I_2,M_2)$, if it exists, is the maximal cube $C(J,N)$ such that $C(J,N) \subseteq C(I_i,M_i)$ for both $i=1,2$. According to \cref{thm:Squelta-poset}, the set of faces satisfying this containment is the set of faces $C(J,N)$ satisfying
\begin{equation*}
N \subseteq M_1 \cap M_2 \quad \text{and} \quad (I_1 \setminus M_1) \cup (I_2 \setminus M_2) \subseteq J \subseteq I_1 \cap I_2. 
\end{equation*}
In order for this set to be non-empty, we need $(I_1 \setminus M_1) \cup (I_2 \setminus M_2) \subseteq I_1 \cap I_2$; if this is true, then the pair $(J,N) = (I_1 \cap I_2, M_1 \cap M_2)$ is in the set. This pair maximises $(J,N)$ subject to the conditions that $J \subseteq I_i$ and $N \subseteq M_i$ for both $i=1,2$, so it must still be maximal given the extra condition $I_i \setminus M_i \subseteq J$.
\end{proof}

Now, let us state the definition of the derivative complex given by \citet[Section~4]{art:Babson-Chan}.

\begin{dfn}
Let $\Squelta$ be a cubical complex. The \emph{derivative complex} of $\Squelta$, denoted $D \Squelta$, is the poset where:
\begin{itemize}
	\item the elements of $D \Squelta$ are the sets $\{b,c\}$, where $b$ and $c$ are faces of $\Squelta$ which have no meet but are both covered by the same face, and
	\item $\{b,c\} \preceq \{b',c'\}$ in $D \Squelta$ if and only if $b \subseteq b'$ and $c \subseteq c'$ or $b \subseteq c'$ and $c \subseteq b'$.
\end{itemize}
\end{dfn}

See \cref{fig:derivative-complex} for an example. Although the derivative complex is defined abstractly as a poset, in general it is isomorphic to the poset of faces of a cubical complex (as we define it, in terms of sets of vertices). This complex is not generally $\CAT(0)$ --- it typically has many connected components. The components of $D \Squelta$ are the hyperplanes of $\Squelta$ (with some caveats if the hyperplanes self-intersect --- self-intersecting hyperplanes never occur in any subcomplex of a cube, though, so this issue does not arise for $\CAT(0)$ complexes). 
One reason for the name ``derivative complex'' is the following observation:
\begin{equation*}
f(D \Squelta, t) = \frac{d}{dt} f(\Squelta, t).
\end{equation*}

\begin{figure}
\centering
\begin{subfigure}{0.4\textwidth}
\centering
\begin{math} \Squelta_P = \begin{tikzpicture}[3D]

\pgftransformscale{0.9}

\draw [edge] (0,1,0) -- (1,1,0) (1,0,0) -- (1,1,0) (1,1,1) -- (1,1,0);
\node [vertex, label=below:$1$] at (1,1,0) {};


\filldraw [face] (0,1,0) -- (0,0,0) -- (0,0,1) -- (0,1,1) -- cycle;
\filldraw [face] (1,0,0) -- (0,0,0) -- (0,0,1) -- (1,0,1) -- cycle;
\filldraw [face] (0,0,1) -- (1,0,1) -- (1,1,1) -- (0,1,1) -- cycle;
\filldraw [face] (1,0,0) rectangle ++(1.1cm,1.1cm);


\node [vertex, label=below left:$0$] at (0,1,0) {};
\node [vertex, label=above left:$2$] at (0,1,1) {};
\node [vertex, label=above:$3$] at (1,1,1) {};
\node [vertex, label=below:$4$] at (0,0,0) {};
\node [vertex, label=below:$5$] at (1,0,0) {};
\node [vertex, label=above:$6$] at (0,0,1) {};
\node [vertex, label=above:$7$] at (1,0,1) {};
\node [vertex, label=below right:$8$] at ($(1,0,0)+(1.1cm,0cm)$) {};
\node [vertex, label=above right:$9$] at ($(1,0,1)+(1.1cm,0cm)$) {};

\end{tikzpicture} \end{math}
\end{subfigure}
\hfill
\begin{subfigure}{0.25\textwidth}
\centering
\begin{math} P = \begin{tikzpicture}
\pgftransformscale{0.8}
\node [vertex, label=below:$A$] at (0,0) {};
\node [vertex, label=below:$B$] at (1,0) {};
\node [vertex, label=above:$C$] at (1,1) {};
\node [vertex, label=above:$D$] at (0,1) {};

\draw [edge] (0,0) -- (0,1) -- (1,0);
\end{tikzpicture} \end{math}
\end{subfigure}
\hfill
\begin{subfigure}{0.25\textwidth}
\centering
\begin{math} \Delta_P = \begin{tikzpicture}
\pgftransformscale{0.8}
\filldraw [face] (0,0) -- (1,0) -- (1,1) -- cycle;
\draw [edge] (0,1) -- (1,1);
\node [vertex, label=below:$A$] at (0,0) {};
\node [vertex, label=below:$B$] at (1,0) {};
\node [vertex, label=above:$C$] at (1,1) {};
\node [vertex, label=above:$D$] at (0,1) {};
\end{tikzpicture} \end{math}
\end{subfigure}
\begin{subfigure}{\textwidth}
\centering
\begin{math} D \Squelta_P = \begin{tikzpicture}[3D]

\pgftransformscale{0.9}

\filldraw [face] (0,0.5,0) -- (1,0.5,0) -- (1,0.5,1) -- (0,0.5,1) -- cycle;

\node [vertex, label=below left:{$0,4$}] at (0,0.5,0) {};
\node [vertex, label=left:{$2,6$}] at (0,0.5,1) {};
\node [vertex, label=above right:{$3,7$}] at (1,0.5,1) {};
\node [vertex, label=right:{$1,5$}] at (1,0.5,0) {};

\end{tikzpicture} \ \begin{tikzpicture}[3D]

\pgftransformscale{0.9}

\filldraw [face] (0.5,0,0) -- (0.5,1,0) -- (0.5,1,1) -- (0.5,0,1) -- cycle;

\node [vertex, label=below right:{$4,5$}] at (0.5,0,0) {};
\node [vertex, label=right:{$6,7$}] at (0.5,0,1) {};
\node [vertex, label=above left:{$2,3$}] at (0.5,1,1) {};
\node [vertex, label=left:{$0,1$}] at (0.5,1,0) {};

\end{tikzpicture} \ \begin{tikzpicture}[3D]

\pgftransformscale{0.9}

\filldraw [face] (0,0,0.5) -- (1,0,0.5) -- (1,1,0.5) -- (0,1,0.5) -- cycle;
\draw [edge] (1,0,0.5) -- ++(1.1cm,0cm);

\node [vertex, label=below:{$4,6$}] at (0,0,0.5) {};
\node [vertex, label=left:{$0,2$}] at (0,1,0.5) {};
\node [vertex, label=above:{$1,3$}] at (1,1,0.5) {};
\node [vertex, label=above:{$5,7$}] at (1,0,0.5) {};
\node [vertex, label=right:{$8,9$}] at ($(1,0,0.5)+(1.1cm,0cm)$) {};

\end{tikzpicture} \ \begin{tikzpicture}

\pgftransformscale{0.9}

\draw [edge] (0,0) -- (0,1.1);

\node [vertex, label=below:{$5,8$}] at (0,0) {};
\node [vertex, label=above:{$7,9$}] at (0,1.1) {};

\end{tikzpicture} \end{math}
\end{subfigure}
\caption{A $\CAT(0)$ cubical complex, its PIP and crossing complex, and its derivative complex} \label{fig:derivative-complex}
\end{figure}
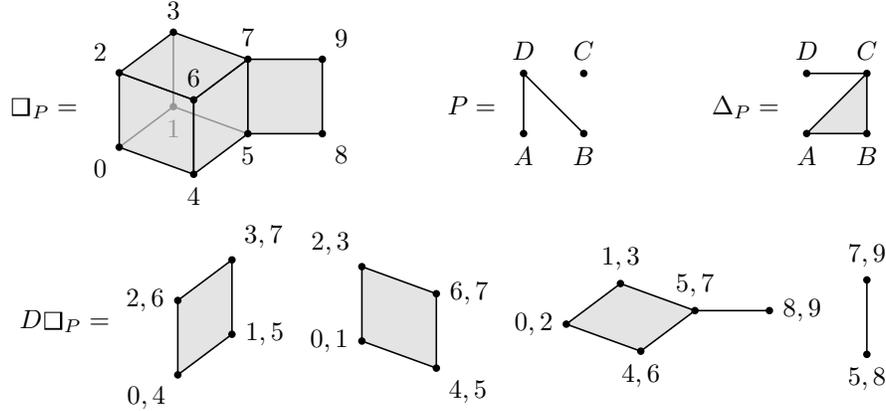

And now we come to the main theorem for this section.

\begin{thm} \label{thm:Squelta-hyperplane-Delta-link}
For each $x \in P$, let $P_x$ be the sub-PIP
\begin{equation*}
P_x \coloneqq \big\{ y \in P : \text{$y$ is consistent and incomparable with $x$} \big\} \subseteq P,
\end{equation*}
and let $H_x$ be the corresponding $\CAT(0)$ cubical complex. Then $D (\Squelta_P)$ is the disjoint union of the complexes $H_x$ for $x \in P$.
\end{thm}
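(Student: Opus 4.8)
The plan is to compare $D\Squelta_P$ directly with the face poset of $\bigsqcup_{x\in P}H_x$, using the explicit descriptions of cube inclusions and cube meets in $\Squelta_P$ provided by \cref{thm:Squelta-poset,thm:Squelta-meet}.

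First I would pin down the elements of $D\Squelta_P$. Since $C(I,M)$ is an $\abs M$-cube, \cref{thm:Squelta-poset} shows that its $2\abs M$ facets are exactly the cubes $C(I,M\setminus\{x\})$ and $C(I\setminus\{x\},M\setminus\{x\})$ for $x\in M$. A short application of \cref{thm:Squelta-meet} shows that two facets of $C(I,M)$ fail to have a meet precisely when they are ``opposite'', i.e.\ of the form $b_{(I,M,x)}\coloneqq C(I,M\setminus\{x\})$ and $c_{(I,M,x)}\coloneqq C(I\setminus\{x\},M\setminus\{x\})$ for a single $x\in M$; every other pair of facets meets in the evident codimension-$2$ face. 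Since $I$, $M$ and $x$ can all be recovered from the unordered pair $\{b_{(I,M,x)},c_{(I,M,x)}\}$, this identifies the elements of $D\Squelta_P$ with triples $(I,M,x)$, where $I$ is a consistent downset of $P$, $M\subseteq\max I$, and $x\in M$.

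Next I would compute the order. Expanding ``$\{b_{(I,M,x)},c_{(I,M,x)}\}\preceq\{b_{(I^*,M^*,x^*)},c_{(I^*,M^*,x^*)}\}$'' using the definition of $D\Squelta$ and \cref{thm:Squelta-poset}, one sees that the option ``$b\subseteq c'$ and $c\subseteq b'$'' would force $x^*$ to lie simultaneously inside and outside $I$, so it never occurs; and the option ``$b\subseteq b'$ and $c\subseteq c'$'' forces $x=x^*$ (the containment $c\subseteq c'$ keeps $x^*$ out of $I\setminus\{x\}$, while $b\subseteq b'$ places $x^*$ in $I$), after which the relation reduces to
\begin{equation*}
M\setminus\{x\}\subseteq M^*\setminus\{x\}\qquad\text{and}\qquad(I^*\setminus M^*)\cup\{x\}\subseteq I\subseteq I^*.
\end{equation*}
In particular, triples with different third coordinates are incomparable, so $D\Squelta_P$ is the disjoint union of the sub-posets $D_x$ consisting of the triples with third coordinate $x$.

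Finally I would identify each $D_x$ with the face poset of $H_x=\Squelta_{P_x}$. The key observation is that a consistent downset $I$ of $P$ satisfies $x\in\max I$ if and only if $I=J\sqcup\downset_P x$ for a unique consistent downset $J$ of $P_x$: every element of such an $I$ is consistent with $x$ and is either below $x$ or incomparable with $x$, so $I$ is the disjoint union of $\downset_P x$ and $I\cap P_x$; conversely $J\sqcup\downset_P x$ is a consistent downset with $x$ maximal whenever $J$ is a consistent downset of $P_x$. Moreover $\max I=\{x\}\sqcup\max_{P_x}(I\cap P_x)$, so $(I,M,x)\mapsto(I\cap P_x,\,M\setminus\{x\})$ is a bijection from $D_x$ onto the set of faces of $\Squelta_{P_x}$. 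Substituting $I=(I\cap P_x)\sqcup\downset_P x$ (and likewise for $I^*$), together with $M^*\cap\downset_P x=\{x\}$, turns the two displayed conditions above into exactly the conditions of \cref{thm:Squelta-poset} for the inclusion $C(I\cap P_x,M\setminus\{x\})\subseteq C(I^*\cap P_x,M^*\setminus\{x\})$ in $\Squelta_{P_x}$, so the bijection is an isomorphism of posets; ranging over $x\in P$ then gives $D\Squelta_P\cong\bigsqcup_{x\in P}H_x$. I expect the main obstacle to be the order computation of the previous paragraph --- carefully carrying out the set algebra, ruling out the ``crossed'' containment $b\subseteq c'$, $c\subseteq b'$, and confirming the reduction to $x=x^*$ and the ensuing simplification; the downset correspondence, although conceptually the heart of the matter, becomes short once one notices the $\downset_P x$ splitting.
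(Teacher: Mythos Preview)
Your proposal is correct and follows essentially the same route as the paper's proof: identify the elements of $D\Squelta_P$ with triples $(I,M,x)$ via the opposite-facet description, use \cref{thm:Squelta-poset} to rule out the crossed containment and force $x=x^*$, then split off $\downset_P x$ to biject each component $D_x$ with the face poset of $\Squelta_{P_x}$. The only cosmetic difference is that the paper writes the bijection as $I\mapsto I\setminus\downset x$ rather than $I\mapsto I\cap P_x$, which is the same map since every element of $I$ is either below $x$ or in $P_x$ once $x\in\max I$.
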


Before we prove this theorem, let us note two of its consequences. First, since the components of $D (\Squelta_P)$ are the hyperplanes of $\Squelta_P$, this theorem gives a combinatorial proof of the following fact, which \citet{art:Niblo-Reeves} observed from the metric space perspective (see also \citet[Theorem~4.11]{art:Sageev}):

\begin{crl} \label{thm:hyperplanes-are-CAT0}
The hyperplanes of a $\CAT(0)$ cubical complex are themselves $\CAT(0)$ cubical complexes.
\end{crl}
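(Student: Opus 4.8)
The plan is to describe the derivative complex $D(\Squelta_P)$ explicitly in terms of the data of $P$, using \cref{thm:Squelta-poset} and \cref{thm:Squelta-meet}, and then to identify each of its connected components --- which, as noted above, are the hyperplanes of $\Squelta_P$, and which correspond bijectively to the elements $x \in P$ --- with the cubical complex $\Squelta_{P_x} = H_x$ produced by \cref{dfn:Ardila-bijection}.

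First I would pin down the elements of $D(\Squelta_P)$. If faces $b \neq c$ of $\Squelta_P$ are both covered by a common face $a$, they are facets of the cube $a$, and \cref{thm:Squelta-meet} shows that two distinct facets of a cube admit a meet in $\Squelta_P$ exactly when they are adjacent; hence $b$ and $c$ fail to have a meet precisely when they are opposite facets of $a$. Writing $a = C(I,M)$, the pair of opposite facets perpendicular to a direction $x \in M$ is
\[
\mu(I,M,x) \coloneqq \{\, C(I, M \setminus \{x\}),\ C(I \setminus \{x\}, M \setminus \{x\}) \,\},
\]
so the elements of $D(\Squelta_P)$ are exactly the $\mu(I,M,x)$ with $I$ a consistent downset, $M \subseteq \max I$ and $x \in M$; each such triple is recovered from the pair (the two facets are told apart by whether $x$ lies in all their vertices or in none), the dimension of $\mu(I,M,x)$ as a face of $D(\Squelta_P)$ is $\abs{M}-1$, and $\mu(I,M,x)$ lies in the component corresponding to $x \in P$ (the hyperplane separating the endpoints of any edge of $C(I,M)$ in direction $x$). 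Restricting the order of $D(\Squelta_P)$ to a single component, \cref{thm:Squelta-poset} then yields $\mu(I_1,M_1,x) \preceq \mu(I_2,M_2,x)$ if and only if $C(I_1,M_1) \subseteq C(I_2,M_2)$: the ``crossed'' alternative in the definition of $\preceq$ is impossible, since one facet of a $\mu$ contains $x$ in each of its vertices while the other contains $x$ in none.

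It then remains to show that, for fixed $x$, the map $(I,M) \mapsto (I \cap P_x,\ M \setminus \{x\})$ is a poset isomorphism from the $x$-component of $D(\Squelta_P)$ onto the face poset of $\Squelta_{P_x}$. The combinatorial core is that $I \mapsto I \cap P_x$ is a bijection from the consistent downsets $I$ of $P$ having $x \in \max I$ onto the consistent downsets of $P_x$, with inverse $K \mapsto K \cup \downset x$ (the downset of $x$ in $P$); here one uses that $\downset x$ is consistent --- immediate from antireflexivity of $\incons$ together with upward-inheritance --- and that every element of $P_x$ is, by definition, incomparable to and consistent with $x$. Granting this, one checks routinely that $M \setminus \{x\}$ automatically lies in $\max_{P_x}(I \cap P_x)$ (since $M \subseteq \max I$ is a consistent antichain containing $x$, so its remaining elements belong to $P_x$ and stay maximal in the smaller set $I \cap P_x$), so that $C(I \cap P_x, M \setminus \{x\})$ is a genuine face of $\Squelta_{P_x}$ and the assignment is a dimension-preserving bijection of faces with inverse $(K,N) \mapsto (K \cup \downset x,\ N \cup \{x\})$; and that it is order-preserving in both directions, which after this substitution reduces to matching the containment conditions of \cref{thm:Squelta-poset} for $\Squelta_P$ against those for $\Squelta_{P_x}$. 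Assembling the components over $x \in P$ then gives $D(\Squelta_P) \cong \bigsqcup_{x \in P} \Squelta_{P_x} = \bigsqcup_{x \in P} H_x$.

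I expect the main obstacle to be the bookkeeping rather than any single idea: verifying that the three constraints ``consistent downset'', ``$M \subseteq \max I$'' and the covering/containment relations all transfer correctly, in both directions, under $I \leftrightarrow I \cap P_x$, so that the correspondence is a genuine poset isomorphism and not merely a dimension-preserving bijection of face sets. The one input that is not purely formal is the identification, noted above, of the components of a derivative complex with its hyperplanes, which (together with \cref{thm:hyperplanes-divide} and the correspondence between hyperplanes of $\Squelta_P$ and elements of $P$) is what lets us split $D(\Squelta_P)$ into the pieces indexed by $x$ in the first place.
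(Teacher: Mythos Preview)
Your proposal is correct and follows essentially the same route as the paper: what you have outlined is precisely the proof of \cref{thm:Squelta-hyperplane-Delta-link}, from which the corollary is an immediate consequence (since each $H_x = \Squelta_{P_x}$ is $\CAT(0)$ by construction). The identification of elements of $D(\Squelta_P)$ as opposite-facet pairs $\mu(I,M,x)$, the reduction of $\preceq$ to $C(I_1,M_1) \subseteq C(I_2,M_2)$ with matching $x$, and the bijection $(I,M) \leftrightarrow (I \cap P_x, M \setminus \{x\})$ --- noting that $I \cap P_x = I \setminus \downset x$ when $x \in \max I$ --- all match the paper's argument, with only cosmetic differences in presentation.
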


Second, in terms of the crossing complex, the underlying set of $P_x$ is precisely the set of vertices of the link of $x$ in $\Delta_P$. Since $\Delta_P$ is flag, all links are induced subcomplexes, so we have the following corollary:

\begin{crl}
The crossing complex of the hyperplane $H_x$ is $\Delta_{P_x} = \link_{\Delta_P} x$.
\end{crl}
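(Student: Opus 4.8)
The plan is to prove the two equalities in the statement separately, and each is short. For the first, observe that $H_x$ is \emph{by definition} the $\CAT(0)$ cubical complex $\Squelta_{P_x}$ attached to the sub-PIP $P_x$, so its crossing complex is $\Delta_{P_x}$ simply by unwinding the definition of the crossing complex of a $\CAT(0)$ cubical complex; one may also invoke the observation following \cref{thm:crossing-is-nerve} that the crossing complex is intrinsic to the cubical complex (its faces being the collections of hyperplanes of $H_x$ with nonempty common intersection) and independent of the choice of root, so that ``the crossing complex of $H_x$'' is unambiguous.

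For the second equality $\Delta_{P_x} = \link_{\Delta_P} x$, I would argue that both sides are flag simplicial complexes on the same vertex set and with the same edges, hence equal. They are flag: $\Delta_{P_x}$ because every crossing complex is flag (it is an anticlique complex), and $\link_{\Delta_P} x$ because a link in a flag complex is flag --- indeed, as noted just before the corollary, $\link_{\Delta_P} x$ is the induced subcomplex of $\Delta_P$ on its vertex set. So it suffices to compare vertex sets and edge sets. A vertex $y \ne x$ of $\Delta_P$ lies in $\link_{\Delta_P} x$ iff $\{x,y\}$ is a face of $\Delta_P$, i.e.\ a consistent antichain of $P$, i.e.\ $y$ is consistent and incomparable with $x$; this is exactly the condition $y \in P_x$ (and $x \notin P_x$ since $x \le x$), so the vertex sets agree. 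For edges, a pair $\{y,z\}$ of such vertices is an edge of $\link_{\Delta_P} x$ iff $\{x,y,z\}$ is a consistent antichain of $P$, which for $y,z \in P_x$ reduces to $\{y,z\}$ being a consistent antichain of $P$; and since $P_x$ is a sub-PIP, carrying the order and inconsistency relations restricted from $P$, this holds iff $\{y,z\}$ is a consistent antichain of $P_x$, i.e.\ an edge of $\Delta_{P_x}$. Hence the edge sets agree.

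The argument is routine, and there is no genuine obstacle; the only point needing a moment of care is the reduction to vertices and edges, which uses flagness of both complexes, together with the remark that ``consistent antichain of $P_x$'' and ``consistent antichain of $P$ contained in $P_x$'' mean the same thing precisely because $P_x$ inherits its relations from $P$ (equivalently, $G_{P_x}$ is the induced subgraph $G_P[P_x]$, so the anticlique complex $\Delta_{P_x}$ is the induced subcomplex $(\Delta_P)\big|_{P_x} = \link_{\Delta_P} x$).
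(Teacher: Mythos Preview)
Your proposal is correct and follows essentially the same approach as the paper: the paper's justification (given in the text immediately preceding the corollary rather than in a separate proof) is precisely that the vertex set of $P_x$ coincides with the vertex set of $\link_{\Delta_P} x$, and that since $\Delta_P$ is flag, its links are induced subcomplexes. Your argument elaborates this same idea more explicitly by checking edges directly and noting that $\Delta_{P_x}$ is also flag, but the underlying reasoning is identical.
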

We find it intriguing (and perhaps unsurprising) that links, one of the most important tools for studying simplicial complexes, correspond to hyperplanes, one of the most important tools for cubical complexes.

We conclude this section by proving the theorem.

\begin{proof}[Proof of \cref{thm:Squelta-hyperplane-Delta-link}]
The first goal of this proof is to describe the elements of $D(\Squelta_P)$, that is, the pairs $\{C(I_1,M_1), C(I_2,M_2)\}$ of faces that share a common cover but have no meet.

We will begin by describing the covering relations in $\Squelta_P$. Suppose $C(I',M') \subset C(I,M)$ is a covering relation; then \cref{thm:Squelta-poset} says that $M' \subseteq M$ and $I \setminus M \subseteq I' \subseteq I$. The poset of faces of $\Squelta_P$ is ranked by dimension, and the dimension of $C(I,M)$ is $\abs{M}$; therefore, since ranks differ by $1$ in a covering relation, the set $M'$ must be $M \setminus x$ for some $x \in M$. By definition, $I'$ must contain $M' = M \setminus x$; putting this together with \cref{thm:Squelta-poset}, we must have
\begin{equation*}
I \setminus x = (I \setminus M) \cup (M \setminus x) \subseteq I' \subseteq I.
\end{equation*}
Therefore, either $I' = I$ or $I' = I \setminus x$. Thus the covering relations in $\Squelta_P$ take two forms: they are either
\begin{align*}
C(I, M \setminus x) & \subset C(I, M) \\
\shortintertext{or}
C(I \setminus x, M \setminus x) & \subset C(I, M)
\end{align*}
for some $x \in M$.

Now, suppose two faces $C(J_1,N_1)$ and $C(J_2,N_2)$ of $\Squelta_P$ share a common cover; the next question to ask is when these faces have a meet. If the common cover is $C(I,M)$, the possibilities for the two faces are:
\begin{align*}
C(I, M \setminus x) \quad & \text{and} \quad C(I, M \setminus y) \quad \text{for some $x \neq y$,} \\
C(I \setminus x, M \setminus x) \quad & \text{and} \quad C(I \setminus y, M \setminus y) \quad \text{for some $x \neq y$,} \\
C(I, M \setminus x) \quad & \text{and} \quad C(I \setminus y, M \setminus y) \quad \text{for some $x \neq y$, or} \\
C(I, M \setminus x) \quad & \text{and} \quad C(I \setminus x, M \setminus x) \quad \text{for some $x$.}
\end{align*}
\Cref{thm:Squelta-meet} says that $C(J_1,N_1)$ and $C(J_2,N_2)$ have a meet if and only if ${(J_1 \setminus N_1)} \cup (J_2 \setminus N_2) \subseteq J_1 \cap J_2$; therefore, in the four cases above, the containments we need to consider are the following:
\begin{align*}
(I \setminus M) \cup \{x,y\} & \subseteq I, \\
I \setminus M & \subseteq I \setminus \{x,y\}, \\
(I \setminus M) \cup \{x\} & \subseteq I \setminus \{y\}, \quad \text{and} \\
(I \setminus M) \cup \{x\} & \not \subseteq I \setminus \{x\}
\end{align*}
respectively. The containment holds in the first three cases, but fails in the fourth; therefore, the pairs of faces of $\Squelta_P$ that have no meet but share a common cover --- that is, the elements of $D (\Squelta_P)$ --- are the pairs of the form
\begin{equation*}
C(I, M \setminus x) \quad \text{and} \quad C(I \setminus x, M \setminus x).
\end{equation*}

Now that we have described the underlying set of $D (\Squelta_P)$, let us turn to its poset structure. Suppose that $F = \big\{ C(I, M \setminus x), C(I \setminus x, M \setminus x) \big\}$ and $G = \big\{ C(J, N \setminus y), C(J \setminus y, N \setminus y) \big\}$ are two elements of $D (\Squelta_P)$. By the definition of $D (\Squelta_P)$, we have $F \preceq G$ in $D (\Squelta_P)$ if and only if
\begin{align*}
C(I, M \setminus x) \subseteq C(J, N \setminus y) \quad & \text{and} \quad C(I \setminus x, M \setminus x) \subseteq C(J \setminus y, N \setminus y), \\
\shortintertext{or}
C(I, M \setminus x) \subseteq C(J \setminus y, N \setminus y) \quad & \text{and} \quad C(I \setminus x, M \setminus x) \subseteq C(J, N \setminus y)
\end{align*}
in $\Squelta_P$. According to \cref{thm:Squelta-poset}, this happens if and only if
\begin{gather*}
M \setminus x \subseteq N \setminus y, \quad J \setminus (N \setminus y) \subseteq I \subseteq J \quad \text{and} \quad J \setminus N \subseteq I \setminus x \subseteq J \setminus y, \\
\shortintertext{or}
M \setminus x \subseteq N \setminus y, \quad J \setminus N \subseteq I \subseteq J \setminus y \quad \text{and} \quad J \setminus (N \setminus y) \subseteq I \setminus x \subseteq J
\end{gather*}
in $P$. But the second of these two conditions is impossible: $I \subseteq J \setminus y$ implies that $y \not \in I$, but $J \setminus (N \setminus y) \subseteq I \setminus x$ implies that $y \in I$. On the other hand, in the first of the two conditions, we have $J \setminus (N \setminus y) \subseteq I$, so $y$ must still be an element of $I$, but $I \setminus x \subseteq J \setminus y$, so the only way this is possible is if $x = y$.

Therefore, putting this all together, we have $F \preceq G$ in $D (\Squelta_P)$ if and only if $x = y$ and
\begin{equation*}
M \setminus x \subseteq N \setminus x, \quad J \setminus (N \setminus x) \subseteq I \subseteq J \quad \text{and} \quad J \setminus N \subseteq I \setminus x \subseteq J \setminus x,
\end{equation*}
which happens if and only if $x = y$ and
\begin{equation*}
M \subseteq N \quad \text{and} \quad J \setminus N \subseteq I \subseteq J,
\end{equation*}
which precisely means that $C(I,M) \subseteq C(J,N)$ in $\Squelta_P$.

The fact that $F \preceq G$ only happens when $x = y$ means that we can partition the poset $D (\Squelta_P)$ into disjoint, incomparable components, each determined by the choice of $x \in P$. By the preceding argument, the elements of the component corresponding to $x$ are in order-preserving bijection with the faces $C(I,M)$ of $\Squelta_P$ where $x \in M$.

For such a face, the requirement that $x \in M$ means the following:
\begin{itemize}
\item $I$ must contain all elements of $\downset x$ since $I$ is a downset;
\item $I$ must never contain any elements of $(\upset x) \setminus x$, since $x$ is maximal in $I$; and
\item $I$ must never contain any element that is inconsistent with $x$.
\end{itemize}
The remaining elements of $P$ are $P_x$. Therefore, $I$ is determined by choosing a downward-closed subset $J$ of $P_x$. Once $J$ is chosen, $I$ is determined as $I = J \cup \downset x$; conversely, $J = I \setminus \downset x$, so $J$ is also determined by the choice of $I$. The maximal elements of $J$ are $(\max I) \setminus x$, so $M$ is determined by a choice of subset $N \subseteq \max J$, with $M = N \cup x$ and $N = M \setminus x$. Therefore, the faces $C(I,M)$ of $\Squelta_P$ with $x \in M$ are in bijection with all faces of $H_x$. Moreover, this bijection is order-preserving, since
\begin{gather*}
M' \subseteq M \quad \text{if and only if} \quad M' \setminus x \subseteq M \setminus x, \\
\intertext{and}
I \setminus M \subseteq I' \subseteq I \quad \text{if and only if} \quad (I \setminus \downset x) \setminus (M \setminus x) \subseteq I' \setminus \downset x \subseteq I \setminus \downset x.
\end{gather*}


Thus $D (\Squelta_P)$ has one connected component for each element $x \in P$, and the component corresponding to $x$ is isomorphic to the poset of faces of $H_x$.
\end{proof}


Before we move on from this section, let us make one more observation. As we saw in this proof, the hyperplane of $\Squelta_P$ corresponding to $x \in P$ is isomorphic (as a poset) to the set of faces of $\Squelta_P$ with $x \in M$. Also, to switch tracks for a moment, recall from \cref{dfn:Ardila-bijection} that $\Squelta_P$ has a natural embedding into the cube $[0,1]^{\abs{P}}$, where each face $C(I,M) \in \Squelta_P$ is mapped to a face of $[0,1]^{\abs{P}}$ parallel to the linear subspace spanned by the basis vectors corresponding to elements of $M$. Putting these two ideas together: the hyperplane of $\Squelta_P$ corresponding to $x$ is isomorphic as a poset to the faces of $\Squelta_P$ that extend in the $x$ direction inside $[0,1]^{\abs{P}}$. In other words,

\begin{prop} \label{thm:hyperplanes-cube-embedding}
In the standard embedding of $\Squelta_P$ in $[0,1]^{\abs{P}}$, the hyperplane corresponding to $x \in P$ is the intersection of $\Squelta_P$ with the hyperplane of the cube $[0,1]^{\abs{P}}$ perpendicular to the $x$th direction.
\end{prop}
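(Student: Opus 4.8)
The plan is to combine the explicit cube embedding of \cref{dfn:Ardila-bijection} with the description of $D(\Squelta_P)$ obtained inside the proof of \cref{thm:Squelta-hyperplane-Delta-link}. First I would unwind the embedding: a vertex $I$ of $\Squelta_P$ (a consistent downset) is the $0/1$ point $\chi_I \in [0,1]^{\abs P}$, and the vertices of a face $C(I,M) = \{ I \setminus N : N \subseteq M \}$ span the subcube $\{ p \in [0,1]^{\abs P} : p_y = 1\ \forall y \in I \setminus M,\ p_y = 0\ \forall y \notin I \}$, which is parallel to $\operatorname{span}\{ e_y : y \in M \}$. The coordinate hyperplane perpendicular to the $x$th direction is $H \coloneqq \{ p \in [0,1]^{\abs P} : p_x = \tfrac12 \}$. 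If $x \notin M$ then $p_x$ is constant on $C(I,M)$, so $C(I,M) \cap H = \emptyset$; if $x \in M$ then $C(I,M) \cap H$ is exactly the midplane of $C(I,M)$ obtained by fixing the $x$-coordinate to $\tfrac12$. Hence $\Squelta_P \cap H$ is precisely the union of the ``$x$-midplanes'' of the faces $C(I,M)$ with $x \in M$.

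Next I would identify this same union with the hyperplane of $\Squelta_P$ corresponding to $x$. Recall that a hyperplane of $\Squelta_P$ is a connected component of the derivative complex $D(\Squelta_P)$, and that an element $\{b,c\} \in D(\Squelta_P)$ with common cover $a$ represents the midplane of $a$ in the direction separating the two opposite facets $b$ and $c$. \Cref{thm:Squelta-hyperplane-Delta-link} tells us $D(\Squelta_P) = \bigsqcup_{x \in P} H_x$, and its proof shows that the component indexed by $x$ consists exactly of the pairs $\{ C(I, M \setminus x),\, C(I \setminus x, M \setminus x) \}$ with $x \in M$. A short computation with \cref{thm:Squelta-poset} shows that $C(I, M \setminus x)$ is the facet $\{ p_x = 1 \}$ of $C(I,M)$ and $C(I \setminus x, M \setminus x)$ is the facet $\{ p_x = 0 \}$, so these are the two facets of $C(I,M)$ perpendicular to the $x$th direction, and the midplane they bound is precisely $C(I,M) \cap H$. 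Therefore the hyperplane of $\Squelta_P$ corresponding to $x$ is $\bigcup_{C(I,M) :\, x \in M} \bigl( C(I,M) \cap H \bigr) = \Squelta_P \cap H$, which is the claim.

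The main thing to be careful about is bookkeeping rather than any genuine difficulty: one must make sure the three incarnations of ``the hyperplane corresponding to $x$''---via the PIP bijection $\Squelta \mapsto P_\Squelta$ of \cref{thm:Ardila-et-al}, via the component $H_x$ of $D(\Squelta_P)$, and via the coordinate hyperplane $\{ p_x = \tfrac12 \}$---literally coincide. The identification of the first two is already the content of \cref{thm:Squelta-hyperplane-Delta-link}; for instance, one can check directly that, with the root $v_0 = \emptyset$ placed at the origin of the cube, the side of $\{ p_x = \tfrac12 \}$ not containing $v_0$ is the set of consistent downsets containing $x$, and that this makes $x \mapsto \{ p_x = \tfrac12 \}$ an isomorphism $P \cong P_{\Squelta_P}$ (using that $\downset y$ is always a consistent downset). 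The genuinely new work is only the third identification, and that amounts to matching the combinatorial pairs $\{ C(I, M \setminus x), C(I \setminus x, M \setminus x) \}$ with geometric midplanes via \cref{thm:Squelta-poset}; no estimates or delicate limiting arguments are involved, so I expect no real obstacle beyond keeping the conventions straight.
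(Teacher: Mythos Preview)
Your proposal is correct and follows essentially the same route as the paper: combine the description from the proof of \cref{thm:Squelta-hyperplane-Delta-link} (the component of $D(\Squelta_P)$ indexed by $x$ consists exactly of the midplanes of faces $C(I,M)$ with $x \in M$) with the observation from \cref{dfn:Ardila-bijection} that in the standard embedding each $C(I,M)$ is parallel to $\operatorname{span}\{e_y : y \in M\}$, so those midplanes are precisely $\Squelta_P \cap \{p_x = \tfrac12\}$. The paper treats the proposition as an immediate observation and does not spell out the bookkeeping you mention; your more careful matching of the three incarnations of ``the hyperplane corresponding to $x$'' is a welcome elaboration, not a deviation.
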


See \cref{fig:hyperplanes-in-cube} for an example.

\begin{figure}
\centering
\begin{tikzpicture}[3D]

\filldraw [face] (0,1,0) -- (1,1,0) -- (1,1,1) -- (0,1,1) -- cycle;
\filldraw [face] (1,1,0) -- (1,1,1) -- (1,0,1) -- (1,0,0) -- cycle;

\path (0,1,0) -- (0,0,0) -- (1,0,0) (0,1,1) -- (0,0,1) -- (1,0,1) (0,0,0) -- (0,0,1);

\draw [hyperplane] (1,0,0.5) -- (1,1,0.5) -- (0,1,0.5);
\draw [hyperplane] (1,0.5,0) -- (1,0.5,1);
\draw [hyperplane] (0.5,1,0) -- (0.5,1,1);

\end{tikzpicture} \qquad \qquad \begin{tikzpicture}[3D]

\filldraw [face] (0,1,0) -- (1,1,0) -- (1,1,1) -- (0,1,1) -- cycle;
\filldraw [face] (1,1,0) -- (1,1,1) -- (1,0,1) -- (1,0,0) -- cycle;

\draw [dashed] (0,1,0) -- (0,0,0) -- (1,0,0) (0,1,1) -- (0,0,1) -- (1,0,1) (0,0,0) -- (0,0,1);

\draw [hyperplane] (0,0,0.5) -- (1,0,0.5) -- (1,1,0.5) -- (0,1,0.5) -- cycle;

\end{tikzpicture} \qquad \begin{tikzpicture}[3D]

\filldraw [face] (0,1,0) -- (1,1,0) -- (1,1,1) -- (0,1,1) -- cycle;
\filldraw [face] (1,1,0) -- (1,1,1) -- (1,0,1) -- (1,0,0) -- cycle;

\draw [dashed] (0,1,0) -- (0,0,0) -- (1,0,0) (0,1,1) -- (0,0,1) -- (1,0,1) (0,0,0) -- (0,0,1);

\draw [hyperplane] (0,0.5,0) -- (1,0.5,0) -- (1,0.5,1) -- (0,0.5,1) -- cycle;

\end{tikzpicture} \qquad \begin{tikzpicture}[3D]

\filldraw [face] (0,1,0) -- (1,1,0) -- (1,1,1) -- (0,1,1) -- cycle;
\filldraw [face] (1,1,0) -- (1,1,1) -- (1,0,1) -- (1,0,0) -- cycle;

\draw [dashed] (0,1,0) -- (0,0,0) -- (1,0,0) (0,1,1) -- (0,0,1) -- (1,0,1) (0,0,0) -- (0,0,1);

\draw [hyperplane] (0.5,0,0) -- (0.5,1,0) -- (0.5,1,1) -- (0.5,0,1) -- cycle;

\end{tikzpicture}
\caption{Hyperplanes in a $\CAT(0)$ complex as intersections with hyperplanes of the cube} \label{fig:hyperplanes-in-cube}
\end{figure}
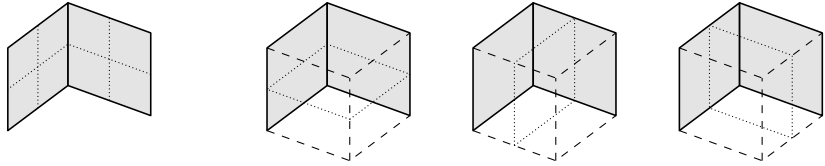

\section{Topology} \label{sec:topology}

In this section, we will describe a way to find $\Delta_P$ as a subspace of $\Squelta_P$, up to homotopy equivalence. First, we recall some general facts about topological spaces.

\begin{dfn}
Suppose $X$ is a topological space, and $\mathcal Z = \{Z_1, \dotsc, Z_m\}$ is a collection of subspaces of $X$. The \emph{nerve} of $\mathcal Z$ is the simplicial complex on vertex set $\{1, \dotsc, m\}$, where a set $S \subseteq \{1, \dotsc, m\}$ is a face if and only if $\bigcap_{i \in S} Z_i$ is non-empty.
\end{dfn}

For example, following the discussion on page~\pageref{thm:crossing-is-nerve}, the crossing complex $\Delta_P$ is the nerve of the collection of hyperplanes of $\Squelta_P$.

One of the most important results about nerves is the (helpfully named) Nerve Theorem:

\begin{thm}[{see e.g.\ \cite[Theorem~10.7]{book:Bjorner}}] \label{thm:nerve} 
Suppose $\mathcal Z = \{Z_1, \dotsc, Z_m\}$ is a collection of closed topological subspaces of a space $X$. If for all $I \subseteq \{1, \dotsc, m\}$ the intersection $\bigcap_{i \in I} Z_i$ is contractible or empty, then the union $\bigcup_{i=1}^m Z_i$ is homotopy equivalent to the nerve of $\mathcal Z$.
\end{thm}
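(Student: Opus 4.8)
The plan is to treat this as the classical Nerve Theorem and give its standard proof; in every place where the theorem is applied in this paper the $Z_i$ are subcomplexes of a CW complex, so the point-set hypotheses needed below hold automatically. I would run the argument through homotopy colimits. Let $\mathcal N = \mathcal N(\mathcal Z)$ be the nerve, let $\mathcal F$ be the poset of its nonempty faces ordered by inclusion, and for nonempty $S \in \mathcal F$ write $Z_S \coloneqq \bigcap_{i \in S} Z_i$, which is nonempty (by definition of $\mathcal F$) and, by hypothesis, contractible. The assignment $S \mapsto Z_S$ together with the inclusions $Z_T \hookrightarrow Z_S$ for $S \subseteq T$ is a diagram of spaces indexed by $\mathcal F$, and the key object is its homotopy colimit $\operatorname{hocolim}_{\mathcal F} Z_S$ (the usual bar/simplicial-replacement construction).

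There are two natural maps associated with this homotopy colimit, and the theorem follows by showing each is a homotopy equivalence. First, the canonical projection $\operatorname{hocolim}_{\mathcal F} Z_S \to \operatorname{colim}_{\mathcal F} Z_S = \bigcup_{i=1}^m Z_i$: since the $Z_i$ are closed and (in our setting) subcomplexes, the inclusions in the diagram are cofibrations, so this projection is a homotopy equivalence by the standard lemma that a homotopy colimit computes the colimit of a cofibrant cover. Second, collapsing every $Z_S$ to a point is a map from the diagram $(S \mapsto Z_S)$ to the constant diagram at $\mathrm{pt}$; it is an objectwise homotopy equivalence because each $Z_S$ is contractible, so it induces a homotopy equivalence $\operatorname{hocolim}_{\mathcal F} Z_S \simeq \operatorname{hocolim}_{\mathcal F} \mathrm{pt}$, and the right-hand side is the order complex of $\mathcal F$, i.e.\ the barycentric subdivision of $\mathcal N$, which is homeomorphic to $\norm{\mathcal N}$. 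Composing gives $\bigcup_{i=1}^m Z_i \simeq \norm{\mathcal N(\mathcal Z)}$.

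The only real obstacle I expect is the first homotopy equivalence, i.e.\ showing that replacing the union by the homotopy colimit does not change the homotopy type; this is exactly where a hypothesis slightly stronger than ``closed subspaces'' is needed in full generality (one wants the inclusions to be cofibrations). The cleanest way to establish it is by induction on $m$: write $\bigcup_{i\le m} Z_i = A \cup Z_m$ with $A = \bigcup_{i<m} Z_i$, note that $A \cap Z_m = \bigcup_{i<m}(Z_i \cap Z_m)$ is a union of $m-1$ closed pieces whose iterated intersections are again contractible or empty, apply the inductive hypothesis to $A$ and to $A \cap Z_m$, and observe that correspondingly $\mathcal N = \mathcal N(Z_1,\dots,Z_{m-1}) \cup \closedstar_{\mathcal N} m$, glued along $\link_{\mathcal N} m = \mathcal N(Z_1\cap Z_m,\dots,Z_{m-1}\cap Z_m)$, with $\closedstar_{\mathcal N} m$ a cone (hence contractible, just like $Z_m$). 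The gluing lemma for homotopy pushouts then closes the induction, provided the inductively produced equivalences can be chosen compatibly over the overlap $A \cap Z_m$; arranging that compatibility is the delicate bookkeeping, and it is precisely what the homotopy-colimit formulation packages away automatically. Since the result is entirely standard, for the purposes of this paper I would simply invoke \cite[Theorem~10.7]{book:Bjorner} and omit these details.
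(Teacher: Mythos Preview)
Your proposal is correct as a sketch of the standard homotopy-colimit proof of the Nerve Theorem, and your own conclusion---to simply invoke \cite[Theorem~10.7]{book:Bjorner} and omit the details---is exactly what the paper does: the theorem is stated with a citation and no proof is given. So there is nothing to compare; you have correctly identified that this is a cited background result rather than something the paper proves.
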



%

In the context of $\CAT(0)$ cubical complexes, we can apply the Nerve Theorem to conclude the following result.

\begin{thm}
The following spaces are homotopy equivalent:
\begin{itemize}
\item The crossing complex $\Delta_P$,
\item The union of the hyperplanes of $\Squelta_P$, and
\item The topological space $\Squelta_P$ with all vertices removed --- i.e., $\norm{\Squelta_P} \setminus \norm{V(\Squelta_P)}$.
\end{itemize}
\end{thm}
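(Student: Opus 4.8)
The plan is to establish the two homotopy equivalences $\Delta_P \simeq \bigcup_{x} h_x$ and $\bigcup_x h_x \simeq \norm{\Squelta_P}\setminus\norm{V(\Squelta_P)}$ separately, writing $h_x$ for the hyperplane of $\Squelta_P$ corresponding to $x \in P$.

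For the first equivalence, recall from the discussion preceding \cref{thm:nerve} that $\Delta_P$ is exactly the nerve of the collection $\mathcal H = \{h_x : x\in P\}$. I would verify the hypotheses of the Nerve Theorem: each $h_x$ is a closed subspace of $\norm{\Squelta_P}$ (a finite union of closed midplanes), and every intersection $\bigcap_{x\in S} h_x$ is contractible or empty. By the fact recalled on page~\pageref{thm:crossing-is-nerve}, such an intersection is empty unless the hyperplanes $h_x$, $x\in S$, pairwise cross; and when they do, $\bigcap_{x\in S} h_x$ is a nonempty \emph{iterated hyperplane} --- $h_{x_1}\cap h_{x_2}$ is a hyperplane of the $\CAT(0)$ complex $h_{x_1}$, then $h_{x_1}\cap h_{x_2}\cap h_{x_3}$ is a hyperplane of $h_{x_1}\cap h_{x_2}$ (again nonempty, by the same fact), and so on --- so it is itself a $\CAT(0)$ cubical complex by \cref{thm:hyperplanes-are-CAT0}, hence contractible by \cref{thm:Gromov}. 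The Nerve Theorem then gives $\bigcup_{x\in P} h_x \simeq \operatorname{Nerve}(\mathcal H) = \Delta_P$.

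For the second equivalence, the idea is that $\norm{\Squelta_P}$ deformation retracts away from its vertices onto the union of all midplanes. I would pass to the first barycentric subdivision $\operatorname{sd}\Squelta_P$, so that $\norm{\Squelta_P}$ becomes a simplicial complex, and observe that $V(\Squelta_P)$ is a \emph{full} subcomplex of $\operatorname{sd}\Squelta_P$: a chain of faces of $\Squelta_P$ whose every term is a vertex of $\Squelta_P$ has only one term. By the standard fact that $\norm{X}\setminus\norm{A}$ deformation retracts onto the subcomplex of simplices of $X$ disjoint from $A$ whenever $A$ is full, it follows that $\norm{\Squelta_P}\setminus\norm{V(\Squelta_P)}$ deformation retracts onto the subcomplex $\Gamma\subseteq\operatorname{sd}\Squelta_P$ spanned by the chains $\sigma_0\subsetneq\dotsb\subsetneq\sigma_k$ with $\dim\sigma_0\geq 1$. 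It then remains to identify $\norm{\Gamma}$ with $\bigcup_x h_x$: inside a cube $\sigma_k\cong[0,1]^n$, a coordinate that is free in $\sigma_0$ is free in every $\sigma_i$, so the barycenter $\hat\sigma_i$ has that coordinate equal to $\tfrac12$; hence if $\dim\sigma_0\geq 1$ the whole simplex $\langle\hat\sigma_0,\dotsc,\hat\sigma_k\rangle$ lies in a midplane of $\sigma_k$, whereas if $\dim\sigma_0=0$ the simplex has a vertex of $\Squelta_P$ among its vertices and so is not contained in any hyperplane. Thus $\Gamma$ is precisely the triangulation of $\bigcup_x h_x$ induced by $\operatorname{sd}\Squelta_P$, and $\norm{\Gamma} = \bigcup_x h_x$ as subspaces of $\norm{\Squelta_P}$.

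Combining the two equivalences proves the theorem. I expect the main obstacle to be the Nerve Theorem step: one must check that \emph{all} intersections of hyperplanes, not merely pairwise ones, are contractible or empty, for which the essential inputs are the recursive ``hyperplane of a hyperplane'' description of $\bigcap_{x\in S} h_x$ (which also follows by iterating \cref{thm:Squelta-hyperplane-Delta-link}), \cref{thm:hyperplanes-are-CAT0}, and \cref{thm:Gromov}. The second part is essentially routine once one invokes the deformation-retraction lemma for full subcomplexes; the only subtlety there is that one must pass to a subdivision first, since $V(\Squelta_P)$ is not a full subcomplex of $\Squelta_P$ itself.
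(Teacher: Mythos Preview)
Your proposal is correct, and the first half---applying the Nerve Theorem after checking that nonempty intersections of hyperplanes are iterated hyperplanes, hence $\CAT(0)$, hence contractible---is exactly the paper's argument. One small difference: the paper establishes ``$h_{x_1}\cap h_{x_2}$ is a hyperplane of $h_{x_1}$'' via the cube embedding (\cref{thm:hyperplanes-cube-embedding}), which is more direct than extracting it from the explicit isomorphism inside the proof of \cref{thm:Squelta-hyperplane-Delta-link}.

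For the second equivalence you take a genuinely different route. The paper builds, cube by cube, an explicit deformation retraction from $[0,1]^r$ minus its corners onto the union of its midcubes, arranged to be compatible with restriction to subfaces, and then glues; this is sketched via a picture rather than written out. Your approach---pass to $\operatorname{sd}\Squelta_P$, note that $V(\Squelta_P)$ is a full subcomplex, invoke the standard retraction onto the complementary subcomplex $\Gamma$, and identify $\norm{\Gamma}$ with the union of hyperplanes---is more citable and avoids the hand-built retraction. One point to tighten: to get $\norm{\Gamma}\supseteq\bigcup_x h_x$ you need more than ``a simplex with $\dim\sigma_0=0$ is not contained in any hyperplane''; you need that its \emph{relative interior} misses every midplane. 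This is true (writing a point in that relative interior as a strictly positive combination of barycenters and checking each coordinate cannot equal $\tfrac12$), but it is the actual content of that inclusion and is worth saying. The paper's approach trades this bookkeeping for the work of making the cube-by-cube retractions agree on shared faces.
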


For example, see \cref{fig:homotopy-example}.

\begin{figure}
\centering
\begin{equation*}
\begin{tikzpicture}

\pgftransformscale{0.7}

\node [vertex] (v5) at (0.6,1.9) {};
\node [vertex] (v1) at (0.0,1.0) {};
\node [vertex] (v2) at (1.7,1.2) {};
\node [vertex] (v4) at (1.1,0.0) {};
\node [vertex] (v3) at (2.7,0.2) {};
\node [vertex] (v6) at (3.2,1.4) {};

\draw [edge] (v3) -- (v4) -- (v1) -- (v5) -- (v2) -- (v4);

\end{tikzpicture} \qquad \simeq \qquad \begin{tikzpicture}

\pgftransformscale{0.8}

\draw [edge] (0.5,0) -- (0.5,2);
\draw [edge] (1.5,0) -- (1.5,2);
\draw [edge] (2.5,0) -- (2.5,1);
\draw [edge] (0,0.5) -- (3,0.5);
\draw [edge] (0,1.5) -- (2,1.5);
\node [vertex] at ($(3,1) + (45:0.5cm)$) {};

\end{tikzpicture} \qquad \simeq \qquad \begin{tikzpicture}

\pgftransformscale{0.8}

\filldraw [face] (0,0) rectangle (1,1);
\filldraw [face] (1,0) rectangle (2,1);
\filldraw [face] (2,0) rectangle (3,1);
\filldraw [face] (0,1) rectangle (1,2);
\filldraw [face] (1,1) rectangle (2,2);
\draw [edge] (3,1) -- +(45:1cm);


\foreach \pos in {(0,0), (1,0), (2,0), (3,0), (0,1), (1,1), (2,1), (3,1), (0,2), (1,2), (2,2), ($(3,1)+(45:1cm)$)}
	\node [vertex, inner sep=2pt, fill=white] at \pos {};

\end{tikzpicture}
\end{equation*}
\caption{Three homotopy equivalent spaces} \label{fig:homotopy-example}
\end{figure}
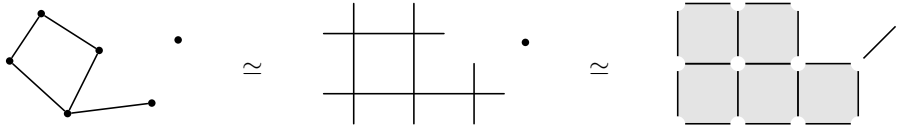

\begin{proof}
We want to apply the Nerve Theorem to the collection of hyperplanes of $\Squelta_P$; however, before we can do this, we must argue that all non-empty intersections of hyperplanes are contractible.

Suppose $H_1$ and $H_2$ are two hyperplanes of $\Squelta_P$. \Cref{thm:hyperplanes-cube-embedding} says that $H_1$ and $H_2$ can be written as $\tilde H_1 \cap \Squelta_P$ and $\tilde H_2 \cap \Squelta_P$ respectively, where $\tilde H_1$ and $\tilde H_2$ are hyperplanes of the cube $[0,1]^{\abs{P}}$. Note that $\tilde H_1$ and $\tilde H_2$ are themselves cubes, and the embeddings of $H_1$ and $H_2$ inside them agree with the standard $\CAT(0)$ complex embeddings of $H_1$ and $H_2$ into cubes. Therefore, $H_1 \cap H_2 = (\tilde H_1 \cap \Squelta_P) \cap (\tilde H_2 \cap \Squelta_P)$ is the intersection of $H_1$ with $\tilde H_1 \cap \tilde H_2$, which is a hyperplane of the cube $\tilde H_1$ --- but \cref{thm:hyperplanes-cube-embedding} says this is itself a hyperplane of $H_1$, if it is non-empty. That is, the non-empty intersection of two hyperplanes of $\Squelta_P$ is a hyperplane of a hyperplane of $\Squelta_P$. By induction, the non-empty intersection of any number of hyperplanes of $\Squelta_P$ is an iterated hyperplane of $\Squelta_P$.

Now, \cref{thm:hyperplanes-are-CAT0} says that hyperplanes are $\CAT(0)$ cubical complexes, so \cref{thm:Gromov} implies that non-empty intersections of hyperplanes are contractible. Therefore, we can apply the Nerve Theorem, and conclude that the union of the hyperplanes of $\Squelta_P$ is homotopy equivalent to the nerve, $\Delta_P$.

It only remains to show that the union of hyperplanes is homotopy equivalent to $\Squelta_P$ with the vertices deleted. The idea for this proof is to construct a deformation retraction on each face, from the face $[0,1]^r$ with its corners deleted to the union of the midcubes of that face, such that if we restrict the deformation retraction to each sub-face of this face, we get another deformation retraction with these properties. Writing down such a deformation retraction explicitly is tedious, so instead we illustrate the deformation retraction in \cref{fig:retraction-cube}, and leave the reader to fill in the details. Once such deformation retractions are defined on each face of $\Squelta_P$, we get a deformation retraction on the whole of $\Squelta_P$ by gluing them together. \qedhere

\begin{figure}
\begin{equation*}
\begin{tikzpicture}[3D, face/.append style={opacity=1, thin}]

\filldraw [face, opacity=1] (0,0,0) -- (0,1,0) -- (0,1,1) -- (0,0,1) -- cycle;
\filldraw [face, opacity=1] (0,0,0) -- (1,0,0) -- (1,0,1) -- (0,0,1) -- cycle;
\filldraw [face, opacity=1] (0,0,1) -- (1,0,1) -- (1,1,1) -- (0,1,1) -- cycle;

\foreach \pos in {(0,0,0), (1,0,0), (0,1,0), (0,0,1), (1,0,1), (0,1,1), (1,1,1)}
	\node [vertex, inner sep=2pt, fill=white] at \pos {};

\end{tikzpicture} \qquad \simeq \qquad \begin{tikzpicture}[3D, face/.append style={opacity=1, thin}]

\filldraw [face] (0.4,0.6,0) -- (0.4,1,0) -- (0.4,1,0.4) -- (0.4,0.6,0.4) -- cycle;
\filldraw [face] (0.4,0.6,0.6) -- (0.4,0.6,1) -- (0.4,1,1) -- (0.4,1,0.6) -- cycle;
\filldraw [face] (0.6,0.4,0.6) -- (0.6,0.4,1) -- (1,0.4,1) -- (1,0.4,0.6) -- cycle;
\filldraw [face] (0,0.6,0.6) -- (0.4,0.6,0.6) -- (0.4,1,0.6) -- (0,1,0.6) -- cycle;
\filldraw [face] (0.6,0,0.6) -- (0.6,0.4,0.6) -- (1,0.4,0.6) -- (1,0,0.6) -- cycle;
\filldraw [face] (0,0.4,0) -- (0.4,0.4,0) -- (0.4,0.4,0.4) -- (0,0.4,0.4) -- cycle;
\filldraw [face] (0.4,0,0) -- (0.4,0.4,0) -- (0.4,0.4,0.4) -- (0.4,0,0.4) -- cycle;
\filldraw [face] (0,0.4,1) -- (0,0.6,1) -- (0.4,0.6,1) -- (0.4,1,1) -- (0.6,1,1) -- (0.6,0.6,1) -- (1,0.6,1) -- (1,0.4,1) -- (0.6,0.4,1) -- (0.6,0,1) -- (0.4,0,1) -- (0.4,0.4,1) -- cycle;
\filldraw [face]  (0,0,0.4) -- (0,0,0.6) -- (0,0.4,0.6) -- (0,0.4,1) -- (0,0.6,1) -- (0,0.6,0.6) -- (0,1,0.6) -- (0,1,0.4) -- (0,0.6,0.4) -- (0,0.6,0) -- (0,0.4,0) -- (0,0.4,0.4) -- cycle;
\filldraw [face]  (0,0,0.4) -- (0,0,0.6) -- (0.4,0,0.6) -- (0.4,0,1) -- (0.6,0,1) -- (0.6,0,0.6) -- (1,0,0.6) -- (1,0,0.4) -- (0.6,0,0.4) -- (0.6,0,0) -- (0.4,0,0) -- (0.4,0,0.4) -- cycle;
\filldraw [face] (0,0,0.6) -- (0.4,0,0.6) -- (0.4,0.4,0.6) -- (0,0.4,0.6) -- cycle;
\filldraw [face] (0,0.4,0.6) -- (0.4,0.4,0.6) -- (0.4,0.4,1) -- (0,0.4,1) -- cycle;
\filldraw [face] (0.4,0,0.6) -- (0.4,0.4,0.6) -- (0.4,0.4,1) -- (0.4,0,1) -- cycle;

\path (0,0,0) -- (1,1,1);

\end{tikzpicture} \qquad \simeq \qquad \begin{tikzpicture}[3D, face/.append style={opacity=1, thin}]

\filldraw [face] (0.5,0.5,0) -- (0.5,1,0) -- (0.5,1,0.5) -- (0.5,0.5,0.5) -- cycle;
\draw [edge, thick] (0.5,0.5,0) -- (0.5,1,0) -- (0.5,1,0.5);
\filldraw [face] (0,0.5,0) -- (0.5,0.5,0) -- (0.5,0.5,0.5) -- (0,0.5,0.5) -- cycle;
\draw [edge, thick] (0,0.5,0) -- (0.5,0.5,0);
\filldraw [face] (0.5,0,0) -- (0.5,0.5,0) -- (0.5,0.5,0.5) -- (0.5,0,0.5) -- cycle;
\draw [edge, thick] (0.5,0,0) -- (0.5,0.5,0);
\filldraw [face] (0.5,0,0.5) -- (1,0,0.5) -- (1,0.5,0.5) -- (0.5,0.5,0.5) -- cycle;
\draw [edge, thick] (1,0,0.5) -- (1,0.5,0.5);
\filldraw [face] (0.5,0.5,0.5) -- (1,0.5,0.5) -- (1,0.5,1) -- (0.5,0.5,1) -- cycle;
\draw [edge, thick] (1,0.5,0.5) -- (1,0.5,1);
\filldraw [face] (0.5,0.5,0.5) -- (0.5,1,0.5) -- (0.5,1,1) -- (0.5,0.5,1) -- cycle;
\draw [edge, thick] (0.5,1,0.5) -- (0.5,1,1);
\filldraw [face] (0,0.5,0.5) -- (0,1,0.5) -- (0.5,1,0.5) -- (0.5,0.5,0.5) -- cycle;
\draw [edge, thick] (0,1,0.5) -- (0.5,1,0.5);
\filldraw [face] (0,0,0.5) -- (0.5,0,0.5) -- (0.5,0.5,0.5) -- (0,0.5,0.5) -- cycle;
\filldraw [face] (0,0.5,0.5) -- (0.5,0.5,0.5) -- (0.5,0.5,1) -- (0,0.5,1) -- cycle;
\filldraw [face] (0.5,0,0.5) -- (0.5,0.5,0.5) -- (0.5,0.5,1) -- (0.5,0,1) -- cycle;

\draw [edge, thick] (0,0.5,0) -- (0,0.5,1) -- (1,0.5,1);
\draw [edge, thick] (0.5,0,0) -- (0.5,0,1) -- (0.5,1,1);
\draw [edge, thick] (0,1,0.5) -- (0,0,0.5) -- (1,0,0.5);

\path (0,0,0) -- (1,1,1);

\end{tikzpicture}
\end{equation*}
\caption{A deformation retraction from $[0,1]^r$ with the vertices deleted to the union of $r$ midcubes, in the case with $r = 3$. Observe that on each face of this cube, the deformation retraction restricts to a retraction from the face with its vertices deleted to the midcubes of that face.} \label{fig:retraction-cube}
\end{figure}
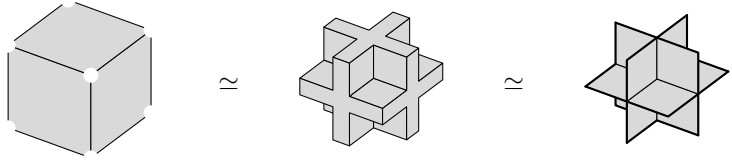
\end{proof}

This theorem gives an alternative way to prove \cref{thm:Squelta-cutfree-Delta-connected} (which said that $\Squelta_P$ has a cut vertex if and only if $\Delta_P$ is disconnected).

\begin{proof}[Alternative proof of \cref{thm:Squelta-cutfree-Delta-connected}]
Observe that $\norm{\Squelta_P} \setminus \norm{V(\Squelta_P)}$ has more than one component if and only if some vertex of $\Squelta_P$ is a cut vertex. Therefore, since connectedness is a homotopy invariant, $\Squelta_P$ has a cut vertex if and only if $\Delta_P$ has more than one component.
\end{proof}

In the remainder of this section, we present some more facts of a topological flavour, by considering the facets of $\Squelta_P$ and $\Delta_P$.

\begin{prop}[see {\citet[Lemma~2.4]{art:Ardila-geodesics}}] \label{thm:facets-biject}
The maximal faces of $\Squelta_P$ are exactly those of the form $C(\downset A, A)$ where $A$ is a maximal consistent antichain of $P$ (under inclusion). Thus $A \mapsto C(\downset A, A)$ is a bijection from facets of $\Delta_P$ to facets of $\Squelta_P$.
\end{prop}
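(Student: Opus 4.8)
The plan is to characterise the facets of $\Squelta_P$ directly from \cref{thm:Squelta-poset,thm:antichains-downsets}, and then read off the bijection. There are two things to prove: (i) if $A$ is a maximal consistent antichain then $C(\downset A, A)$ is a face of $\Squelta_P$ (valid since, by \cref{thm:antichains-downsets}, $\downset A$ is a consistent downset with $\max(\downset A) = A$) and in fact a facet; and (ii) conversely, every facet of $\Squelta_P$ has this form. Once both are established, the bijection is immediate: the facets of $\Delta_P$ are by definition the maximal consistent antichains, the assignment $A \mapsto C(\downset A, A)$ lands in the facets of $\Squelta_P$ by~(i) and hits all of them by~(ii), and it is injective because $A$ is recovered from $C(\downset A, A)$ as $A = \max(\downset A)$ (again \cref{thm:antichains-downsets}).

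For~(i), suppose $A$ is a maximal consistent antichain and $C(\downset A, A) \subseteq C(I', M')$ for some face $C(I',M')$. By \cref{thm:Squelta-poset} this forces $A \subseteq M'$, and by definition $M' \subseteq \max I'$. Now $\max I'$ is a consistent antichain (it is a subset of the consistent set $I'$) containing $A$, so maximality of $A$ gives $\max I' = A$; hence $M' = A$, and then $I' = \downset(\max I') = \downset A$ by \cref{thm:antichains-downsets}. Thus $C(I', M') = C(\downset A, A)$, so this face is maximal.

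For~(ii), note first that $I = \downset(\max I)$ for every consistent downset $I$ (this is one half of \cref{thm:antichains-downsets}), so every face of $\Squelta_P$ has the form $C(\downset A, A')$ with $A := \max I$ a consistent antichain and $A' = M \subseteq A$. If $A' \subsetneq A$, pick $x \in A \setminus A'$; then \cref{thm:Squelta-poset} gives $C(\downset A, A') \subsetneq C(\downset A, A' \cup \{x\})$, so $C(\downset A, A')$ is not a facet. Hence every facet has the form $C(\downset A, A)$ with $A$ a consistent antichain, and it remains to show $A$ must be maximal.

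This last step is the heart of the argument, and the one I expect to be the main obstacle: showing that if $A$ is a consistent antichain that is \emph{not} maximal, then $C(\downset A, A)$ is not a facet. Since $A$ is not maximal there is some $x \notin A$ with $A \cup \{x\}$ a consistent antichain. The tempting move, enlarging to $C(\downset(A\cup\{x\}), A\cup\{x\})$, does not work: $\downset(A \cup \{x\})$ may contain elements strictly below $x$ that are not in $\downset A$, and then the containment $\downset(A\cup\{x\}) \setminus (A \cup \{x\}) \subseteq \downset A$ demanded by \cref{thm:Squelta-poset} fails. The fix is to take $y$ to be a minimal element of the nonempty set $\downset x \setminus \downset A$ (note $x$ itself lies in it). By minimality, everything strictly below $y$ lies in $\downset A$, so $\downset y \setminus \{y\} \subseteq \downset A$. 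Next, $x$ is consistent with every element of $\downset A$ — an inconsistency $x \incons z$ with $z \leq a \in A$ would, by upward inheritance, force $x \incons a$ — and since $y \leq x$ the same property (in contrapositive form) shows $y$ too is consistent with all of $\downset A$. Finally $y$ is incomparable to each $a \in A$: the relation $a \leq y$ would give $a < x$ (as $a \neq x$), contradicting that $A \cup \{x\}$ is an antichain, while $y \leq a$ would put $y \in \downset A$. Therefore $A \cup \{y\}$ is a consistent antichain with $\downset(A \cup \{y\}) = \downset A \cup \{y\}$, and \cref{thm:Squelta-poset} yields $C(\downset A, A) \subsetneq C(\downset A \cup \{y\}, A \cup \{y\})$. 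So $C(\downset A, A)$ is not a facet, which completes~(ii) and hence the proof.
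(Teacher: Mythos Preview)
Your proof is correct and follows essentially the same approach as the paper: parts~(i) and the first half of~(ii) are virtually identical, and the key step---showing that a non-maximal consistent antichain $A$ yields a non-maximal face---proceeds in both by finding an element $y$ that extends $A$ to a larger consistent antichain while having everything strictly below it already in $\downset A$. The only difference is cosmetic: the paper locates this element in one step, by choosing $x_0$ minimal among \emph{all} elements extending $A$ to a consistent antichain, whereas you first pick an arbitrary extension $x$ and then descend to a minimal $y \in \downset x \setminus \downset A$; either way one lands on an element with the required properties.
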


\begin{proof}
The facets of $\Delta_P$ are the maximal consistent antichains of $P$, by definition, so the second statement follows immediately from the first. To prove the first statement, there are two things to show: we need to check that $C(\downset A, A)$ is always a maximal face of $\Squelta_P$, and that every maximal face has this form.

First, suppose $A$ is a maximal consistent antichain, and $C(I,M)$ is a face of $\Squelta_P$ with $C(\downset A, A) \subseteq C(I,M)$. In particular, \cref{thm:Squelta-poset} then says that $A \subseteq M \subseteq \max I$. Since $A$ is a maximal consistent antichain, we must therefore have $A = M = \max I$, so $I = \downset \max I = \downset A$. Thus $C(\downset A, A) = C(I,M)$, so $C(\downset A, A)$ is indeed a maximal face of $\Squelta_P$.

On the other hand, suppose $C(J,N)$ is an arbitrary maximal face of $\Squelta_P$. Consider the face $C(J, \max J)$: since $N \subseteq \max J$ and $J \setminus \max J \subseteq J \subseteq J$, \cref{thm:Squelta-poset} says that $C(J,N) \subseteq C(J, \max J)$, so the maximality of $C(J,N)$ means that $N = \max J$. Thus $J = \downset \max J = \downset N$, so $C(J,N) = C(\downset N, N)$. Thus it only remains to prove that $N$ is a maximal consistent antichain.

Suppose $N$ is not maximal. The set
\begin{equation*}
S \coloneqq \{ x \in P \setminus N : \text{$N \cup x$ is a consistent antichain} \}
\end{equation*}
is thus non-empty, so it has an element $x_0$ that is minimal with respect to the order in $P$. 
Since $x_0$ is minimal in $S$, any $y \in P$ with $y < x_0$ must be comparable or inconsistent with some $n \in N$. However, $y$ cannot be inconsistent with $n$, as then $x_0$ and $n$ would be inconsistent; and $y$ cannot be greater than $n$, as then we would have $x_0 > n$. Thus $y$ must be less than $n$, so $y \in \downset N$. Therefore, $(\downset N) \cup x_0 = J \cup x_0$ is a consistent downset of $P$. We also have $N \subseteq N \cup x_0$ and $(J \cup x_0) \setminus (N \cup x_0) \subseteq J \subseteq J \cup x_0$, so \cref{thm:Squelta-poset} says that $C(J,N) \subsetneq C(J \cup x_0, N \cup x_0)$. But this contradicts the maximality of $C(J,N)$; therefore, $N$ must be a maximal consistent antichain.
%
%
\end{proof}

Note that the dimension of $A$ as a face of $\Delta_P$ is $\abs{A} - 1$, whereas the dimension of $C(\downset A, A)$ in $\Squelta_P$ is $\abs{A}$. This gives us an alternative proof of \cref{thm:dimensions} (which said that $\dim \Squelta_P = \dim \Delta_P + 1$), as well as the following corollary:

\begin{crl} \label{thm:pureness}
$\Delta_P$ is pure if and only if $\Squelta_P$ is pure.
\end{crl}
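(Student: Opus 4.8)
The plan is to deduce this immediately from \cref{thm:facets-biject} together with the dimension bookkeeping recorded just before the corollary. Recall that a complex (simplicial or cubical) is pure exactly when all of its facets have the same dimension, equivalently when the set $\{\dim F : F \text{ a facet}\}$ is a singleton. So the strategy is simply to show that the set of facet-dimensions of $\Delta_P$ and the set of facet-dimensions of $\Squelta_P$ are singletons at the same time.

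First I would invoke \cref{thm:facets-biject}: the map $A \mapsto C(\downset A, A)$ is a bijection from the facets of $\Delta_P$ (the maximal consistent antichains of $P$) to the facets of $\Squelta_P$. Next I would observe, as noted in the paragraph preceding the corollary, that this bijection shifts dimension by exactly $1$: a facet $A$ has $\dim_{\Delta_P} A = \abs{A} - 1$, while the corresponding cubical facet has $\dim_{\Squelta_P} C(\downset A, A) = \abs{A}$, since $C(I,M)$ is an $\abs{M}$-cube. Consequently, for any two facets $A, A'$ of $\Delta_P$ we have
\begin{equation*}
\dim_{\Delta_P} A = \dim_{\Delta_P} A' \iff \abs{A} = \abs{A'} \iff \dim_{\Squelta_P} C(\downset A, A) = \dim_{\Squelta_P} C(\downset A', A').
\end{equation*}
Since every facet of $\Squelta_P$ arises as some $C(\downset A, A)$, the complex $\Squelta_P$ is pure if and only if $\abs{A} = \abs{A'}$ for all facets $A, A'$ of $\Delta_P$, which is precisely the condition that $\Delta_P$ be pure.

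I do not anticipate a genuine obstacle here: the statement is a formal consequence of \cref{thm:facets-biject} and the observation that the facet bijection is dimension-shifting by a fixed amount. The only thing to be careful about is to phrase purity in terms of the common value of facet dimensions (or facet cardinalities) rather than in terms of a specific dimension, so that the constant shift by $1$ does not cause any confusion when passing between the simplicial and cubical conventions.
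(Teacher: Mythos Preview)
Your proposal is correct and is exactly the argument the paper has in mind: the corollary is stated immediately after the observation that the facet bijection $A \mapsto C(\downset A, A)$ shifts dimension by a constant amount, and you have simply spelled out that implication carefully. There is nothing to add.
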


We can also combine this proposition with \cref{thm:Squelta-meet}, to obtain the following lemma:

\begin{lma} \label{thm:facet-intersection}
If two facets $C(\downset A, A)$ and $C(\downset B, B)$ of $\Squelta_P$ intersect in a face of dimension $r$, the corresponding facets $A$ and $B$ of $\Delta_P$ intersect in the face $A \cap B$, which has dimension $r-1$.
\end{lma}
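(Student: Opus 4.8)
The plan is to combine \cref{thm:facets-biject} with \cref{thm:Squelta-meet}. By \cref{thm:facets-biject}, the facets $A$ and $B$ of $\Delta_P$ are maximal consistent antichains, and the corresponding facets of $\Squelta_P$ are $C(\downset A, A)$ and $C(\downset B, B)$. Since $\Squelta_P$ is a cubical complex, the set-theoretic intersection of two of its faces is either empty or again a face of $\Squelta_P$; by hypothesis this intersection is a face of dimension $r$, so in particular it is the meet of $C(\downset A, A)$ and $C(\downset B, B)$ in the face poset of $\Squelta_P$.

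Next I would invoke \cref{thm:Squelta-meet} with $(I_1, M_1) = (\downset A, A)$ and $(I_2, M_2) = (\downset B, B)$: since the meet exists, it equals
\begin{equation*}
C(\downset A, A) \cap C(\downset B, B) = C\bigl( (\downset A) \cap (\downset B),\ A \cap B \bigr).
\end{equation*}
Recall from \cref{dfn:Ardila-bijection} that a face $C(J, N)$ is an $\abs N$-dimensional cube, so applying this to the meet shows that $r = \abs{A \cap B}$. Note that there is no need to identify $(\downset A) \cap (\downset B)$ with $\downset(A \cap B)$ — these need not be equal — since only the second coordinate of $C(\cdot,\cdot)$ controls the dimension; in the extreme case $A \cap B = \emptyset$ the meet is a single vertex, consistent with $r = 0$.

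Finally, $A \cap B$ is a subset of the consistent antichain $A$, hence itself a consistent antichain, hence a face of $\Delta_P$, and it is of course exactly the intersection of the faces $A$ and $B$ of $\Delta_P$. Its dimension as a face of $\Delta_P$ is $\abs{A \cap B} - 1 = r - 1$, as claimed. I do not anticipate any serious obstacle; the only point requiring a moment of care is the observation that a set-theoretic intersection of faces which happens to be a face must coincide with the poset meet, so that \cref{thm:Squelta-meet} is applicable in the form above.
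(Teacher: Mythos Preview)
Your proposal is correct and follows essentially the same route as the paper: invoke \cref{thm:Squelta-meet} to identify the intersection as $C((\downset A)\cap(\downset B),\, A\cap B)$ and read off its dimension as $\abs{A\cap B}$. Your added remarks (that the non-empty set-theoretic intersection of faces is the poset meet, that $(\downset A)\cap(\downset B)$ need not equal $\downset(A\cap B)$, and that $A\cap B$ is a consistent antichain) are all accurate and simply make explicit what the paper leaves implicit.
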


\begin{proof}
\Cref{thm:Squelta-meet} says that the intersection of $C(\downset A, A)$ and $C(\downset B, B)$, if it exists, is $C(\downset A \cap \downset B, A \cap B)$, which has dimension
\begin{align*}
\dim_{\Squelta_P} C(\downset A \cap \downset B, A \cap B) & = \abs{A \cap B} \\
& = \dim_{\Delta_P} (A \cap B) + 1. \qedhere
\end{align*}
\end{proof}

Recall that by convention, we require $\emptyset$ to be a face of every simplicial complex, but not a face of any cubical complex. Therefore, if $C(\downset A, A) \cap C(\downset B, B)$ is a face of dimension $0$, then this lemma says that $A$ and $B$ intersect in the $(-1)$-dimensional empty face of $\Delta_P$; however, if the intersection of $C(\downset A, A)$ and $C(\downset B, B)$ is empty, then we can say nothing about $A \cap B$.

\section{Balancedness} \label{sec:balanced}


In this final section, we will take a look at a special class of simplicial and cubical complexes, namely \emph{balanced} complexes.

\begin{dfn}
An \emph{$r$-colouring} of a simplicial complex $\Delta$ is a map $\kappa_s$ (with ``$s$'' for ``simplicial'') from the set $V(\Delta)$ of vertices of $\Delta$ to the set $\{1, \dotsc, r\}$, with the property that for any two vertices %
connected by an edge (or equivalently, any two distinct vertices that lie in a common face), the images of the vertices under $\kappa_s$ are different. If such an $r$-colouring exists, we say that $\Delta$ is \emph{$r$-colourable}; if $\Delta$ is $(d-1)$-dimensional and $d$-colourable, we say it is \emph{balanced}. Note that a $(d-1)$-dimensional simplicial complex must have a face with $d$ vertices, by definition, so at least $d$ colours are always necessary for colouring a $(d-1)$-dimensional complex: the balanced condition says that $d$ colours are enough.

Here is another way of viewing colourings. The set $\{1, \dotsc, r\}$ may be thought of as the set of vertices of the $(r-1)$-dimensional simplex $\Sigma_{r-1}$. From this viewpoint, an $r$-colouring of $\Delta$ is a map $\kappa_s : V(\Delta) \to V(\Sigma_{r-1})$, such that the restriction of $\kappa_s$ to any face of $\Delta$ is a bijection to a face of $\Sigma_{r-1}$.

This idea motivates the following definition of colourings for cubical complexes: an $r$-colouring of a cubical complex $\Squelta$ is a map $\kappa_c : V(\Squelta) \to V([0,1]^r)$ (with ``$c$'' for ``cubical''), where $[0,1]^r$ is the $r$-dimensional cube, such that the restriction of $\kappa_c$ to any face of $\Squelta$ is a bijection to a face of $[0,1]^r$. If such a colouring exists, $\Squelta$ is \emph{$r$-colourable}, and if $\Squelta$ is $d$-dimensional and $d$-colourable, it is called \emph{balanced}. 


See \cref{fig:balanced-simplicial,fig:balanced-cubical} for some examples of balanced and non-balanced simplicial and cubical complexes.

Note that any simplicial complex with $n$ vertices is $n$-colourable, by assigning a different colour to every vertex. Similarly, recall that any $\CAT(0)$ cubical complex with $n$ hyperplanes can be embedded in the $n$-dimensional cube, so it is $n$-colourable. However, there are many cubical complexes (which cannot be $\CAT(0)$) that are not $r$-colourable for any $r$ --- for example, any non-bipartite graph.
\end{dfn}

\begin{figure}
\centering
\begin{subfigure}[b]{0.6\textwidth}
\begin{equation*}
\begin{tikzpicture}

\pgftransformscale{1.1}
\pgftransformyscale{0.87}

\filldraw [face] (0,0) -- (0.5,1) -- (1,0) -- cycle;
\filldraw [face] (0.5,1) -- (1,0) -- (1.5,1) -- cycle;
\filldraw [face] (1,0) -- (1.5,1) -- (2,0) -- cycle;

\node [vertex, label=below:$1$] at (0,0) {};
\node [vertex, label=below:$2$] at (1,0) {};
\node [vertex, label=below:$3$] at (2,0) {};
\node [vertex, label=above:$3$] at (0.5,1) {};
\node [vertex, label=above:$1$] at (1.5,1) {};

\end{tikzpicture} \qquad \to \qquad \begin{tikzpicture}

\pgftransformscale{1.1}
\pgftransformyscale{0.87}

\filldraw [face] (-0.1,0.1) -- (0.45,1) -- (1,0) -- cycle;

\node [vertex, label=left:$1$] at (-0.1,0.1) {};
\node [vertex, label=below:$2$] at (1,0) {};
\node [vertex, label=above:$3$] at (0.45,1) {};

\filldraw [face] (0,0) -- (0.45,1) -- (1,0) -- cycle;

\node [vertex, label=below:$1$] at (0,0) {};

\filldraw [face] (0,0) -- (0.6,0.9) -- (1,0) -- cycle;

\node [vertex, label=above right:$3$] at (0.6,0.9) {};

\end{tikzpicture}
\end{equation*}
\caption{Balanced} \label{fig:balanced-simplicial-yes}
\end{subfigure} \qquad \begin{subfigure}[b]{0.3\textwidth}
\centering
\begin{equation*}
\begin{tikzpicture}

\pgftransformrotate{90}
\pgftransformyscale{-1}

\draw [edge] (0:1) foreach \i in {1,...,4} { -- (72*\i:1)} -- cycle;

\foreach \ang in {1,...,5}
	\node [vertex] at (72*\ang:1) {};

\end{tikzpicture}
\end{equation*}
\caption{Not balanced} \label{fig:balanced-simplicial-no}
\end{subfigure}
\caption{A balanced and a non-balanced simplicial complex} \label{fig:balanced-simplicial}
\end{figure}
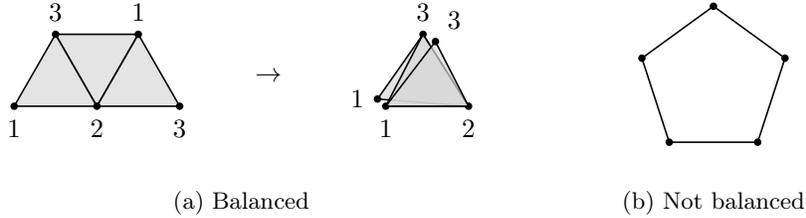

\begin{figure}
\centering
\begin{subfigure}[b]{0.6\textwidth}
\centering
\begin{tikzpicture}[3D]

\pgftransformscale{1.1}

\filldraw [face] (0,1,0) -- (1,1,0) -- (1,1,1) -- (0,1,1) -- cycle;
\filldraw [face] (0,1,0) -- (0,2,0) -- (0,2,1) -- (0,1,1) -- cycle;
\filldraw [face] (0,1,1) -- (1,1,1) -- (1,2,1) -- (0,2,1) -- cycle;
\filldraw [face] (1,0,0) -- (1,1,0) -- (1,1,1) -- (1,0,1) -- cycle;
\filldraw [face] (1,0,0) -- (2,0,0) -- (2,0,1) -- (1,0,1) -- cycle;
\filldraw [face] (1,0,1) -- (1,1,1) -- (2,1,1) -- (2,0,1) -- cycle;
\filldraw [face] (1,1,1) -- (2,1,1) -- (2,2,1) -- (1,2,1) -- cycle;

\foreach \pos/\colouring/\labelpos in {(0,1,0)/$000$/below, (1,1,0)/$100$/below, (1,1,1)/$101$/above, (0,1,1)/$001$/above, (0,2,0)/$010$/left, (0,2,1)/$011$/left, (1,2,1)/$111$/above left, (1,0,0)/$110$/below, (1,0,1)/$111$/above, (2,0,0)/$010$/right, (2,0,1)/$011$/right, (2,1,1)/$001$/above right, (2,2,1)/$011$/above}
	\node [vertex, label=\labelpos:\small\colouring] at \pos {};

\end{tikzpicture}
\caption{Balanced} \label{fig:balanced-cubical-yes}
\end{subfigure} \qquad \begin{subfigure}[b]{0.3\textwidth}
\centering
\begin{tikzpicture}

\pgftransformscale{1}
\pgftransformrotate{90}
\pgftransformyscale{-1}

\filldraw [face] (0:0) -- (0:1) -- ++(72:1) -- (72:1) -- cycle;
\filldraw [face] (0:0) -- (72:1) -- ++(144:1) -- (144:1) -- cycle;
\filldraw [face] (0:0) -- (144:1) -- ++(216:1) -- (216:1) -- cycle;
\filldraw [face] (0:0) -- (216:1) -- ++(288:1) -- (288:1) -- cycle;
\filldraw [face] (0:0) -- (288:1) -- ++(0:1) -- (0:1) -- cycle;


\node [vertex] at (0:0) {};
\node [vertex] at (0:1) {};
\node [vertex] at (72:1) {};
\node [vertex] at (144:1) {};
\node [vertex] at (216:1) {};
\node [vertex] at (288:1) {};
\node [vertex] at ($(0:1)+(72:1)$) {};
\node [vertex] at ($(72:1)+(144:1)$) {};
\node [vertex] at ($(144:1)+(216:1)$) {};
\node [vertex] at ($(216:1)+(288:1)$) {};
\node [vertex] at ($(288:1)+(0:1)$) {};


\end{tikzpicture}
\caption{Not balanced} \label{fig:balanced-cubical-no}
\end{subfigure}
\caption{A balanced and a non-balanced cubical complex (whose crossing complexes are the ones in \cref{fig:balanced-simplicial})} \label{fig:balanced-cubical}
\end{figure}
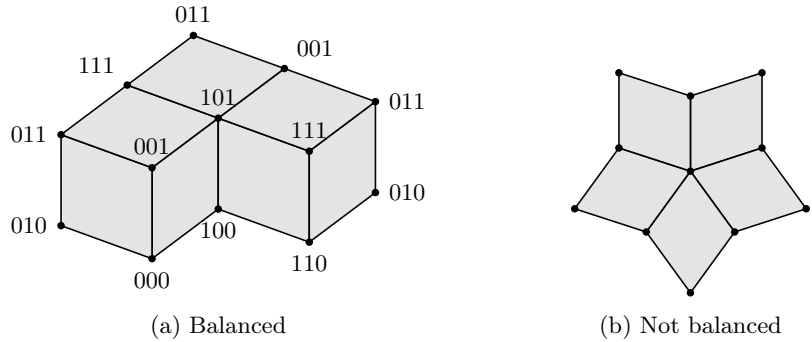

There are many connections between balanced and flag simplicial complexes. For example:

\begin{thm}[{\citet{art:Frohmader}}] \label{thm:Frohmader}
The $f$-vector of any flag simplicial complex is also the $f$-vector of some balanced simplicial complex.
\end{thm}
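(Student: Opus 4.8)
The plan is to deduce the theorem from the Frankl--Füredi--Kalai characterisation of the $f$-vectors of balanced complexes. That result is a ``coloured Kruskal--Katona'' theorem: there is an explicit operator $\partial_i$, built from a colour-sensitive analogue of the cascade (Macaulay) representation of integers, so that a sequence $(f_0, \dotsc, f_{d-1})$ is the $f$-vector of some balanced $(d-1)$-dimensional complex if and only if $f_i \le \partial_i(f_{i-1})$ for every $i$; moreover it comes with a canonical ``colour-compressed'' extremal complex witnessing equality. Consequently, to prove the theorem it is enough to show that the $f$-vector of an arbitrary flag complex satisfies these inequalities: the Frankl--Füredi--Kalai construction then manufactures a balanced complex with exactly that $f$-vector.

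To verify the inequalities I would pass to graph theory. Writing a flag complex as $\clique(G)$ for a graph $G$, the number $f_i$ is precisely the number of $(i+1)$-cliques of $G$, so the inequalities to be proved are a Kruskal--Katona-type bound on the clique vector of a graph: the number of $(i+1)$-cliques of $G$ must be at most $\partial_i$ applied to the number of $i$-cliques. The engine is a compression argument. Fixing a linear order on $V(G)$, one repeatedly applies shifting-type moves that push edges toward an initial segment in a colex-type order, keeping track of how the clique counts change, until one reaches a graph $G'$ of a special ``colex'' form. The point is then that the clique complex of such a $G'$ is itself \emph{balanced} --- one colours each vertex of $G'$ by the stage at which it enters the colex construction and checks this is a proper colouring of $\clique(G')$ --- so its clique vector is literally a balanced $f$-vector and thus satisfies the Frankl--Füredi--Kalai inequalities; the compression moves are arranged so that these inequalities for $G'$ imply the same ones for the original $G$.

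The hard part is the compression analysis, which is the combinatorial heart of Frohmader's proof. Unlike the classical Kruskal--Katona theorem, where one shifts with respect to a single face dimension, here the entire clique vector must be controlled simultaneously, so one needs compression operators that behave monotonically with respect to all of the $f_i$ at once. Designing such operators, proving that they terminate at the intended colex-type normal forms, and checking that those normal forms coincide with the Frankl--Füredi--Kalai extremal colour-compressed complexes (so that the flag bound matches the balanced bound) is where essentially all the work lies.
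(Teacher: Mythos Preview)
The paper does not prove this statement at all: \cref{thm:Frohmader} is quoted as a known theorem of Frohmader and is used only as context connecting balancedness to flagness. There is no proof in the paper to compare your proposal against.

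That said, what you have written is not a proof but an outline of Frohmader's own strategy --- you yourself say that ``the hard part is the compression analysis, which is the combinatorial heart of Frohmader's proof'' and then stop. If the intent were genuinely to supply a self-contained proof here, you would need to actually define the shifting operators, prove they weakly decrease the relevant clique counts (or otherwise preserve the FFK inequalities), prove termination, and identify the terminal objects with the Frankl--F\"uredi--Kalai extremal complexes. None of that is carried out. As a citation-with-sketch your summary is accurate in spirit, but as a proof it has a gap exactly where the content is.
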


Now let us return to the world of $\CAT(0)$ cubical complexes, with the main result for this section:

\begin{thm} \label{thm:Squelta-balanced-Delta-balanced}
$\Delta_P$ is $r$-colourable if and only if $\Squelta_P$ is $r$-colourable. Hence $\Delta_P$ is balanced if and only if $\Squelta_P$ is balanced.
\end{thm}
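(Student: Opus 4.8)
The plan is to prove the stronger statement that $\Delta_P$ is $r$-colourable if and only if $\Squelta_P$ is $r$-colourable, by translating colourings explicitly in both directions, and then deduce the ``balanced'' claim from \cref{thm:dimensions} (which gives $\dim\Squelta_P = \dim\Delta_P + 1$): indeed, $\Delta_P$ is balanced exactly when it is $d$-colourable for $d = \dim\Delta_P + 1 = \dim\Squelta_P$, which by the colourability equivalence happens exactly when $\Squelta_P$ is $d$-colourable, i.e.\ balanced.

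For the direction ``$\Delta_P$ $r$-colourable $\implies$ $\Squelta_P$ $r$-colourable'', suppose $\kappa_s\colon P \to \{1,\dots,r\}$ is a proper colouring of the underlying graph of $\Delta_P$, whose edges are the consistent incomparable pairs. Define $\kappa_c$ on a vertex $I$ of $\Squelta_P$ (a consistent downset) by declaring its $j$th coordinate to be the parity of $\#\{x \in I : \kappa_s(x) = j\}$. To check this is a cubical colouring, fix a face $C(I,M)$ of $\Squelta_P$; since $M \subseteq \max I$ is a consistent antichain, $\kappa_s$ is injective on $M$, so in particular $m$ is the unique element of $M$ of colour $\kappa_s(m)$. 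Hence for $N \subseteq M$ the vector $\kappa_c(I \setminus N)$ agrees with $\kappa_c(I)$ in every coordinate outside $\kappa_s(M)$, and in coordinate $\kappa_s(m)$ it differs from $\kappa_c(I)$ precisely when $m \in N$. Therefore $N \mapsto \kappa_c(I \setminus N)$ is a bijection from the $2^{|M|}$ subsets of $M$ onto the $|M|$-dimensional face of $[0,1]^r$ whose free coordinates are those in $\kappa_s(M)$, so $\kappa_c$ is an $r$-colouring of $\Squelta_P$.

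For the converse, suppose $\kappa_c\colon V(\Squelta_P) \to \{0,1\}^r$ is a cubical $r$-colouring. Given $x \in P$, there is at least one edge of $\Squelta_P$ dual to the hyperplane $x$, for instance $C(\downset x, \{x\})$ with endpoints $\downset x$ and $(\downset x)\setminus x$; and $\kappa_c$ sends the two endpoints of any such edge to vertices of $[0,1]^r$ differing in exactly one coordinate, which I call $c(e)$. The crux of the argument is that $c(e)$ depends only on $x$: any two edges dual to $x$ are joined by a chain of squares of $\Squelta_P$, each of the form $C(I,\{x,y\})$, since the hyperplane dual to $x$ is connected --- it is a $\CAT(0)$ cubical complex (the complex $H_x$ of \cref{thm:Squelta-hyperplane-Delta-link}, which is $\CAT(0)$ by \cref{thm:hyperplanes-are-CAT0}, hence contractible and in particular connected) --- and inside each such square $\kappa_c$ restricts to a bijection onto a $2$-face of $[0,1]^r$, so the square varies in exactly two coordinates and its two $x$-dual edges flip the same one. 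Thus $\kappa_s(x) \coloneqq c(x)$ is well defined. To see it is a proper colouring of $\Delta_P$, let $\{x,y\}$ be an edge of $\Delta_P$, i.e.\ $x$ and $y$ consistent and incomparable; then $C(\downset\{x,y\},\{x,y\})$ is a square of $\Squelta_P$, and $\kappa_c$ maps it bijectively onto a $2$-face of $[0,1]^r$, which varies in exactly two distinct coordinates, namely $c(x)$ and $c(y)$, so $\kappa_s(x) \neq \kappa_s(y)$.

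I expect the main obstacle to be exactly the well-definedness of $c(x)$ in the converse direction: making rigorous that every edge dual to a fixed hyperplane is flipped to the \emph{same} cube-coordinate by $\kappa_c$. This requires combining the connectedness of hyperplanes (from \cref{thm:hyperplanes-are-CAT0,thm:Squelta-hyperplane-Delta-link}) with a careful local analysis, identifying the vertices of the hyperplane's own cubical structure with the edges of $\Squelta_P$ dual to it and its edges with the squares of $\Squelta_P$ containing that direction. Everything else --- the parity formula, the face-by-face checks, and the passage to ``balanced'' via \cref{thm:dimensions} --- is routine bookkeeping.
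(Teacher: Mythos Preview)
Your proof is correct and follows essentially the same approach as the paper: the parity construction for the forward direction is identical, and your converse direction---showing that each edge dual to a fixed hyperplane flips the same coordinate, via connectedness of the hyperplane---is precisely the combinatorial content of the paper's geometric argument that $\kappa_c'$ carries each hyperplane of $\Squelta_P$ into a single hyperplane of $[0,1]^r$ (the paper phrases this in terms of adjacent midcubes rather than edges and squares, but the underlying logic is the same). The deduction of the balanced statement from \cref{thm:dimensions} also matches.
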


\begin{proof}
First, suppose $\Delta_P$ is $r$-colourable, so we have a colouring $\kappa_s : V(\Delta_P) \to \{1, \dotsc, r\}$. Now, recall that the vertex set of $\Squelta_P$ is the set of consistent downsets of $P$, and the vertex set of $[0,1]^r$ is $\{0,1\}^r$, so define a map $\kappa_c : V(\Squelta_P) \to \{0,1\}^r$ by sending a downset $I$ to the vector $w = (w_1, \dotsc, w_r)$, where
\begin{align*}
w_j & = \# \{ i \in I : \kappa_s(i) = j \} \mod 2;
\end{align*}
that is, the $j$th coordinate of $w$ is $0$ if there are an even number of elements of $I$ with colour $j$, and it is $1$ if this number is odd. We claim that this is a valid colouring.

Suppose $C(I,M) = \{I \setminus N : N \subseteq M\}$ is a face of $\Squelta_P$. Since $M$ is a subset of $\max I$, it is a consistent antichain of $P$, thus $\kappa_s$ assigns different colours to all elements of $M$. Therefore, every vertex in $C(I,M)$ is assigned a different colour by $\kappa_c$, and these colours are precisely the set of vectors in $\{0,1\}^r$ where the $j$th coordinate may vary for all colours $j$ appearing in $M$, and otherwise the $j$th coordinate matches the parity of colour $j$ appearing in $I$. This set is a face of $[0,1]^r$, so $\kappa_c$ is a valid colouring, and $\Squelta_P$ is $r$-colourable.

Now, assume $\Squelta_P$ is $r$-colourable, with colouring $\kappa_c' : V(\Squelta_P) \to \{0,1\}^r$. Recall that the vertex set of $\Delta_P$ is in bijection with the set of hyperplanes of $\Squelta_P$. Geometrically, since $\kappa_c'$ is a bijection on each face, the image of a midcube under $\kappa_c'$ is a midcube of a face of $[0,1]^r$, and if two midcubes meet at a common face in $\Squelta_P$, their images also meet at a common face in $[0,1]^r$. Therefore, $\kappa_c'$ takes each hyperplane of $\Squelta_P$ to a subset of a hyperplane of $[0,1]^r$. There are $r$ hyperplanes in $[0,1]^r$, so we can define a map $\kappa_s' : V(\Delta_P) \to \{1, \dotsc, r\}$ by sending an element of $V(\Delta_P)$, which corresponds to a hyperplane of $\Squelta_P$, to the hyperplane of $[0,1]^r$ containing its image under $\kappa_c'$.

Now, if two vertices of $\Delta_P$ are connected by an edge, then the corresponding hyperplanes of $\Squelta_P$ must intersect. This intersection must meet some face of $\Squelta_P$, so these two hyperplanes must involve distinct midcubes of this face. Since $\kappa_c'$ is a bijection on this face, these two midcubes must be sent to different midcubes in a face of $[0,1]^r$; therefore, the images of the two hyperplanes in $\Squelta_P$ must lie in different hyperplanes in $[0,1]^r$. Therefore, $\kappa_s'$ is a valid colouring, so $\Delta_P$ is $r$-colourable.

Finally, $\Delta_P$ is balanced if and only if it is $(\dim \Delta_P + 1)$-colourable, which happens if and only if $\Squelta_P$ is $(\dim \Squelta_P)$-colourable by \cref{thm:dimensions}, which precisely means that $\Squelta_P$ is balanced.
\end{proof}


%

We get the following corollary by combining this proposition with \cref{thm:f-polynomials}:

\begin{crl}
If $p(x)$ is the $f$-polynomial of a balanced $\CAT(0)$ cubical complex, then $p(x+1)$ is the $f$-polynomial of a balanced, flag simplicial complex.
\end{crl}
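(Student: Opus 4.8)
The plan is to run the same argument that proved \cref{thm:f-vectors}, transferring through \cref{thm:Ardila-et-al}, but now dragging the adjective ``balanced'' along the way. Let $\Squelta$ be a balanced $\CAT(0)$ cubical complex of dimension $d$, with $f$-polynomial $p$. First I would pick a root vertex for $\Squelta$ arbitrarily and use \cref{thm:Ardila-et-al} to write $\Squelta = \Squelta_P$ for some PIP $P$. This costs nothing, since $r$-colourability of a cubical complex refers only to the complex and not to any choice of root, so it is irrelevant which root we use.

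Second, I would propose the crossing complex $\Delta_P$ as the required simplicial complex. It is flag, as noted immediately after the definition of the crossing complex (it is the anticlique complex of a graph). It is balanced because \cref{thm:Squelta-balanced-Delta-balanced} converts the $d$-colouring of $\Squelta_P$ into a $d$-colouring of $\Delta_P$, while \cref{thm:dimensions} gives $\dim \Delta_P = d - 1$; and a $d$-colouring of a $(d-1)$-dimensional complex is exactly a balanced colouring.

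Third, I would identify the $f$-polynomial. Here \cref{thm:f-polynomials} does all the work: it relates $f(\Squelta_P, t)$ and $f(\Delta_P, t)$ by the shift of variable recorded in \cref{thm:f-vectors}, so substituting shows that the $f$-polynomial of the balanced flag complex $\Delta_P$ is precisely the asserted transform of $p$. In other words, this is the flag-side companion of the already-established \cref{thm:f-vectors}, obtained by noting that the complex produced there can be taken to be balanced.

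Once \cref{thm:Squelta-balanced-Delta-balanced} and \cref{thm:f-polynomials} are in hand there is essentially no obstacle left; the statement is a genuine corollary. The only points that merit attention are the two bookkeeping checks above: verifying that cubical $r$-colourability is independent of the choice of root, so that the detour through \citeauthor{art:Ardila-geodesics}'s bijection is legitimate, and reconciling the simplicial and cubical $f$-polynomial conventions correctly when performing the variable shift.
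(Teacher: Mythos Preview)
Your proposal is correct and follows the same route as the paper, which simply records the corollary as an immediate combination of \cref{thm:Squelta-balanced-Delta-balanced} and \cref{thm:f-polynomials}. Your extra care about root-independence of colourability and about the sign of the variable shift is appropriate bookkeeping that the paper leaves implicit.
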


We will return to $f$-polynomials at the end of this section, but until then, let us take a detour.

One large class of balanced $\CAT(0)$ cubical complexes comes from taking $P$ to be a poset, i.e.\ a PIP with no inconsistent pairs. Recall the following well-known fact about posets:

\begin{thm}[Dilworth's theorem {\cite[Theorem~1.1]{art:Dilworth}}] 
If $P$ is a poset where the largest antichain has cardinality $r$, then $P$ can be written as the union of $r$ chains.
\end{thm}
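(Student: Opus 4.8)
The plan is to prove Dilworth's theorem by induction on $\abs P$, following the classical argument of Galvin. The base case $P = \emptyset$ is immediate (a union of zero chains). For the inductive step, let $r$ be the maximum cardinality of an antichain of $P$, and consider the collection of all antichains of $P$ of size exactly $r$. The argument splits into two cases, according to whether some such antichain is ``in the middle'' of $P$ or whether they are all forced to the top or bottom.

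First, suppose there is an antichain $A = \{a_1, \dots, a_r\}$ of size $r$ which is neither the set of all maximal elements of $P$ nor the set of all minimal elements of $P$. Let $D = \downset A$ and $U = \upset A$. Any element lying in neither $D$ nor $U$ would be incomparable with every $a_i$, producing an antichain of size $r+1$; hence $D \cup U = P$. Because $A$ omits some maximal element of $P$, that element lies in $D$ but not in $U$, so $U \subsetneq P$, and dually $D \subsetneq P$; thus both $D$ and $U$ are strictly smaller than $P$. The largest antichain of $D$ has size exactly $r$: it is at least $r$ since $A \subseteq D$, and at most $r$ since $D \subseteq P$. Moreover each $a_i$ is a maximal element of $D$ (if $x \in D$ with $x > a_i$, then $x \le a_j$ for some $j$, forcing $a_i < a_j$). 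By the induction hypothesis $D$ is a union of $r$ chains; since each size-$r$ antichain of $D$ meets every one of these chains in exactly one element, we may relabel so that $a_i$ lies in the $i$th chain $C_i$, and then maximality of $a_i$ in $D$ makes $a_i$ the top of $C_i$. Dually, $U$ is a union of $r$ chains $C_i'$ with $a_i$ the bottom of $C_i'$. Then each $C_i \cup C_i'$ is a chain (glued along $a_i$), and these $r$ chains cover $D \cup U = P$.

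In the remaining case, every antichain of $P$ of size $r$ equals the set of all maximal elements of $P$ or the set of all minimal elements of $P$. Choose a minimal element $x$ of $P$ and a maximal element $y \ge x$ (so $x = y$ exactly when $x$ is isolated), and put $P' = P \setminus \{x, y\}$. The largest antichain of $P'$ has size $r - 1$: it is at most $r$, and an antichain of $P'$ of size $r$ would be an antichain of $P$ avoiding the maximal element $y$ and the minimal element $x$, hence neither the set of all maximal elements nor the set of all minimal elements of $P$, contradicting the case hypothesis. By induction $P'$ is a union of $r - 1$ chains, and adjoining the chain $\{x, y\}$ exhibits $P$ as a union of $r$ chains, completing the induction.

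The part requiring care is the bookkeeping in the first case: verifying that the hypothesis ``$A$ is not the set of all maximal or all minimal elements'' is precisely what forces $D$ and $U$ to be proper subsets of $P$, that the maximum antichain size in each of $D$ and $U$ remains $r$ (so the inductive hypothesis applies with the same $r$), and that the chain decompositions of $D$ and $U$ can be chosen with the $a_i$ as top and bottom elements respectively, so the two families of chains glue compatibly. One could instead derive the theorem from König's theorem (equivalently, Hall's marriage theorem) applied to a bipartite graph built from two copies of $P$, but the self-contained induction above stays within the elementary poset combinatorics already in use here.
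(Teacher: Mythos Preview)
The paper does not prove Dilworth's theorem at all; it simply quotes it with a citation to \cite{art:Dilworth} and then applies it as a black box to obtain the balancedness corollary. So there is no proof in the paper to compare your argument against, and any correct self-contained proof is a genuine addition.

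Your argument is the standard Galvin-style induction and is essentially correct, but one step in the first case is stated the wrong way round. You claim that a maximal element $m$ of $P$ omitted by $A$ ``lies in $D$ but not in $U$''. In fact the opposite holds: if $m \in D = \downset A$ then $m \le a_i$ for some $i$, and maximality of $m$ forces $m = a_i \in A$, contradicting $m \notin A$; hence $m \notin D$, and since $D \cup U = P$ we get $m \in U$. Thus the omitted maximal element witnesses $D \subsetneq P$, and dually an omitted minimal element witnesses $U \subsetneq P$. Your conclusion that both $D$ and $U$ are proper subsets of $P$ is correct; only the attribution of which element certifies which strict inclusion needs to be swapped. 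With that fix the proof goes through cleanly.
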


Translating this into the language of balanced simplicial complexes gives us the following immediate consequence:

\begin{crl}
If $P$ is a PIP with no inconsistent pairs, then $\Delta_P$ (and thus also $\Squelta_P$) is balanced.
\end{crl}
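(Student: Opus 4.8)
The plan is to unwind the definitions and apply Dilworth's theorem directly. Since $P$ has no inconsistent pairs, every subset of $P$ is consistent, so the faces of the crossing complex $\Delta_P$ are precisely the antichains of $P$. In particular, the facets of $\Delta_P$ are the maximal antichains, and the dimension of $\Delta_P$ is $w - 1$, where $w$ is the \emph{width} of $P$, i.e.\ the cardinality of the largest antichain. So to show $\Delta_P$ is balanced, it suffices to produce a $w$-colouring $\kappa_s \colon V(\Delta_P) \to \{1, \dotsc, w\}$.

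First I would invoke Dilworth's theorem to write $P = C_1 \sqcup \dotsb \sqcup C_w$ as a disjoint union of $w$ chains. Then I would define $\kappa_s$ by setting $\kappa_s(x) = i$ whenever $x \in C_i$. To check this is a valid colouring of $\Delta_P$, I would take any face $\sigma$ of $\Delta_P$, that is, any antichain, and any two distinct elements $x, y \in \sigma$: since $x$ and $y$ are incomparable, they cannot lie in the same chain $C_i$ (any two elements of a chain are comparable), so $\kappa_s(x) \neq \kappa_s(y)$. Hence the restriction of $\kappa_s$ to every face is injective, which is exactly the condition for an $r$-colouring of a simplicial complex. This shows $\Delta_P$ is $w$-colourable, and since $w = \dim \Delta_P + 1$, it is balanced.

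Finally, \cref{thm:Squelta-balanced-Delta-balanced} immediately upgrades this to the statement that $\Squelta_P$ is balanced as well. I do not anticipate any real obstacle here: the only thing to be slightly careful about is the bookkeeping that ``no inconsistent pairs'' makes every antichain a face, so that the simplicial width matches the combinatorial width of $P$ and Dilworth applies on the nose; after that the colouring is forced by the chain decomposition.
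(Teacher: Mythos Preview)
Your proposal is correct and follows essentially the same approach as the paper: both apply Dilworth's theorem to decompose $P$ into $w$ chains (where $w$ is the width), colour each vertex by the index of its chain, and observe that adjacent vertices of $\Delta_P$ are incomparable in $P$ and hence lie in different chains. The only cosmetic difference is that you spell out the ``no inconsistent pairs'' hypothesis more explicitly and invoke \cref{thm:Squelta-balanced-Delta-balanced} for the $\Squelta_P$ conclusion, whereas the paper leaves that implicit.
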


\begin{proof}
If the size of the largest antichain of $P$ is $d$, the antichain complex $\Delta_P$ has dimension $d-1$. Dilworth's theorem says that $P$ is the union of $d$ chains: we can therefore colour $\Delta_P$ by assigning colour $i$ to the vertices in the $i$th chain. Two vertices of $\Delta_P$ are adjacent if and only if they are incomparable in $P$, which means they cannot be in the same chain: hence this colouring is valid.
\end{proof}

These theorems are only useful if we can detect whether a given complex $\Squelta_P$ comes from a PIP without inconsistent pairs: fortunately, there is the following result, which follows quickly from some observations by \citet{art:Ardila-geodesics,art:Ardila-robots}.

\begin{lma}
$P$ has no inconsistent pairs if and only if there is some vertex $v_\infty$ of $\Squelta_P$ such that every vertex lies on some shortest edge path from $v_0$ to $v_\infty$.
\end{lma}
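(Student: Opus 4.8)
The plan is to translate the geometric condition into a purely combinatorial statement about consistent downsets, using the standard fact (implicit in the observations of \citet{art:Ardila-geodesics}, and true in any $\CAT(0)$ cubical complex) that the edge-path distance between two vertices equals the number of hyperplanes separating them. By \cref{thm:hyperplanes-cube-embedding}, the hyperplane corresponding to $x \in P$ separates the vertex given by a consistent downset $I$ from the vertex given by a consistent downset $J$ precisely when $x$ lies in the symmetric difference $I \triangle J$; hence the distance between these two vertices is $\lvert I \triangle J \rvert$, and in particular the distance from $v_0$ (the downset $\emptyset$) to the vertex of $I$ is $\lvert I \rvert$. Consequently a vertex $v$, corresponding to a consistent downset $I$, lies on a shortest edge path from $v_0$ to a vertex $v_\infty$ corresponding to a consistent downset $L$ if and only if $\lvert I \rvert + \lvert I \triangle L \rvert = \lvert L \rvert$, and a short computation with $\lvert I\rvert = \lvert I\cap L\rvert + \lvert I\setminus L\rvert$ and $\lvert L\rvert = \lvert I\cap L\rvert + \lvert L\setminus I\rvert$ shows this holds if and only if $I \subseteq L$. (For the forward implication one can alternatively note that a subpath of a shortest path is shortest, so such a path must reach $v$ in $\lvert I\rvert$ steps, each adding an element of $L$; for the reverse implication one builds an explicit geodesic from a linear extension of the subposet $L$ that lists the elements of $I$ first.)

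Granting this, the stated geometric condition --- that some vertex $v_\infty$ has every vertex lying on a shortest edge path from $v_0$ to it --- is equivalent to saying that the poset of consistent downsets of $P$, ordered by inclusion, has a maximum element $L$ (namely the downset of $v_\infty$). So it remains to show that $P$ has no inconsistent pairs if and only if this poset has a maximum. One direction is immediate: if $P$ has no inconsistent pairs then $P$ itself is a consistent downset, and it contains every other consistent downset, so it is the maximum. For the converse, the key observation is that in \emph{any} PIP the principal downset $\downset x$ is consistent for every $x \in P$: if $y, z \le x$ with $y \incons z$, then the upward-inherited property of inconsistent pairs would force $x \incons x$, contradicting antireflexivity. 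Hence, if $L$ is the maximum consistent downset, then $\downset x \subseteq L$ for every $x$, so $P = \bigcup_{x \in P} \downset x \subseteq L$; thus $L = P$, and $P$ is consistent, i.e.\ has no inconsistent pairs.

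I expect the main obstacle to be the first paragraph: establishing cleanly that ``lying on a shortest edge path from $v_0$ to $v_\infty$'' corresponds exactly to the downset inclusion $I \subseteq L$. This rests on the combinatorial-distance fact for $\CAT(0)$ cubical complexes (equivalently, that combinatorial geodesics cross each separating hyperplane exactly once and cross no others), which should either be cited directly or extracted from the cited work of \citeauthor{art:Ardila-geodesics}; once it is available, the remaining chain of equivalences --- geometric condition $\Leftrightarrow$ the consistent-downset poset has a top element $\Leftrightarrow$ $P$ is consistent --- is routine, with the only genuine content being the short argument above that $\downset x$ is always consistent.
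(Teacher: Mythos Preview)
Your proof is correct. The paper's argument is more citation-heavy and less unified: for the direction ``$v_\infty$ exists $\Rightarrow$ $P$ consistent'' it simply invokes \cite[Lemma~3.2]{art:Ardila-geodesics}, and for the converse it takes $v_\infty$ to be the vertex corresponding to $P$, cites \cite[Proposition~7.4]{art:Ardila-robots} for the value of $d(v_0,v_\infty)$, and then builds an explicit geodesic through an arbitrary vertex by adding minimal elements one at a time (essentially the linear-extension construction you sketch in your parenthetical). Your route is genuinely different in that you first reformulate the geometric condition as a purely order-theoretic one --- the lattice of consistent downsets having a top element --- and then dispatch both directions from that. The payoff is a self-contained argument for the harder direction: rather than appealing to an external lemma, your observation that $\downset x$ is always consistent (via upward inheritance and antireflexivity) immediately forces any maximum consistent downset to equal $P$. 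The cost is that you must import the edge-distance formula $d(I,J)=\lvert I\triangle J\rvert$ up front, which the paper avoids stating explicitly; as you note, this is standard for $\CAT(0)$ cubical complexes and is the one place a citation is needed.
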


If $\Squelta_P$ is a complex with this property, we say that $\Squelta_P = [v_0, v_\infty]$ is an \emph{interval}.

\begin{proof}
\citet[Lemma~3.2]{art:Ardila-geodesics} observed that if $\Squelta_P$ is a complex of this form, then $P$ is consistent.

Conversely, if $P$ has no inconsistent pairs, then $P$ itself is a consistent downset, so we may define $v_\infty$ to be the vertex corresponding to $P$ as a downset. \citet[Proposition~7.4]{art:Ardila-robots} noted that the shortest edge paths from the root vertex $v_0$ to the vertex $v_\infty$ all have length $\abs{P}$. Now, suppose $w$ is an arbitrary vertex of $\Squelta_P$ corresponding to the downset $I$. Construct a path from $v_0$ to $v_\infty$ passing through $w$ as follows: define $J_0 \coloneqq \emptyset$, and inductively take $J_i \coloneqq J_{i-1} \cup x_i$ where $x_i$ is a minimal element of $I \setminus J_{i-1}$, until we reach the stage where $J_i = I$; from then on, do the same but take $x_i$ to be a minimal element of $P \setminus J_{i-1}$ until $J_i = P$. Each element of $P$ is taken as $x_i$ once, so this path has length $\abs{P}$, hence it is a shortest edge path.
\end{proof}

Note that \citet[Theorem~3.5]{art:Ardila-geodesics} observe a stronger property (using similar proof ideas): they show that if $\Squelta_P = [v_0, v_\infty]$ is a $d$-dimensional interval, then it actually can be \emph{embedded} into the unit grid structure in $\mathbb R^d$. This is not true in general for balanced complexes --- for example, see \cref{fig:balanced-not-embed-Rd}. 

\begin{figure}
\begin{equation*}
\Squelta_P = \quad \begin{tikzpicture}

\pgftransformscale{0.9}
\pgftransformrotate{90}
\pgftransformyscale{-1}

\filldraw [face] (0:0) -- (0:1) -- ++(60:1) -- (60:1) -- cycle;
\filldraw [face] (0:0) -- (60:1) -- ++(120:1) -- (120:1) -- cycle;
\filldraw [face] (0:0) -- (120:1) -- ++(180:1) -- (180:1) -- cycle;
\filldraw [face] (0:0) -- (180:1) -- ++(240:1) -- (240:1) -- cycle;
\filldraw [face] (0:0) -- (240:1) -- ++(300:1) -- (300:1) -- cycle;
\filldraw [face] (0:0) -- (300:1) -- ++(360:1) -- (360:1) -- cycle;



\node [vertex, label=right:\small$00$] at (0:0) {};
\node [vertex, label=above:\small$01$] at (0:1) {};
\node [vertex, label=above right:\small$10$] at (60:1) {};
\node [vertex, label=below right:\small$01$] at (120:1) {};
\node [vertex, label=below:\small$10$] at (180:1) {};
\node [vertex, label=below left:\small$01$] at (240:1) {};
\node [vertex, label=above left:\small$10$] at (300:1) {};
\node [vertex, label=above right:\small$11$] at ($(0:1)+(60:1)$) {};
\node [vertex, label=right:\small$11$] at ($(60:1)+(120:1)$) {};
\node [vertex, label=below right:\small$11$] at ($(120:1)+(180:1)$) {};
\node [vertex, label=below left:\small$11$] at ($(180:1)+(240:1)$) {};
\node [vertex, label=left:\small$11$] at ($(240:1)+(300:1)$) {};
\node [vertex, label=above left:\small$11$] at ($(300:1)+(0:1)$) {};

\end{tikzpicture} \quad, \qquad \Delta_P = \quad \begin{tikzpicture}

\pgftransformrotate{90}
\pgftransformyscale{-1}

\draw [edge] (0:1) foreach \i in {1,...,5} { -- (60*\i:1)} -- cycle;

\foreach \ang in {1,...,6}
	\node [vertex] at (60*\ang:1) {};

\end{tikzpicture}
\end{equation*}
\caption{A balanced $\CAT(0)$ $2$-dimensional cubical complex that does not embed into $\mathbb R^2$} \label{fig:balanced-not-embed-Rd}
\end{figure}
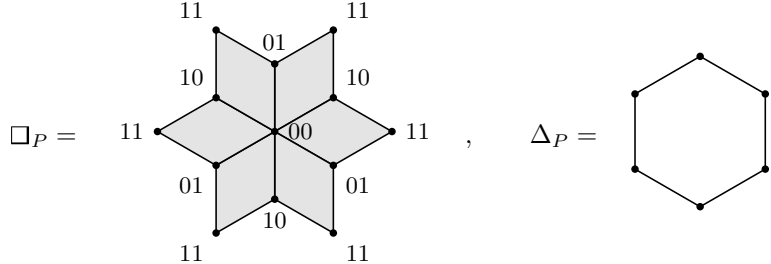

%
%

While we are discussing PIPs with order relations but no inconsistent pairs, we may as well comment on the opposite end of the spectrum, namely PIPs with inconsistent pairs but no order relations, i.e.\ graphs.

\begin{lma}
$P$ has no order relations (except equality) if and only if every facet of $\Squelta_P$ contains the root vertex; in other words, $\Squelta_P = \closedstar v_0$.
\end{lma}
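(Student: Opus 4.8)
The plan is to combine \cref{thm:facets-biject}, which says the facets of $\Squelta_P$ are exactly the cubes $C(\downset A, A)$ for $A$ a maximal consistent antichain of $P$, with a direct calculation of when such a facet contains the root vertex $v_0$.

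First I would recall from \cref{dfn:Ardila-bijection} that $v_0$ is the vertex corresponding to the downset $\emptyset$, and that $C(\downset A, A)$ has vertex set $\{\downset A \setminus N : N \subseteq A\}$. So $v_0 \in C(\downset A, A)$ precisely when $\downset A \setminus N = \emptyset$ for some $N \subseteq A$, i.e.\ $\downset A \subseteq N \subseteq A$ for some $N$. Since $A \subseteq \downset A$ always, this chain of inclusions collapses, forcing $\downset A = A$; and conversely $\downset A = A$ makes $N = A$ work. Because $A$ is an antichain, $\downset A = A$ holds exactly when every element of $A$ is minimal in $P$, that is, $A \subseteq \min P$. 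Hence ``every facet of $\Squelta_P$ contains $v_0$'' is equivalent to ``every maximal consistent antichain of $P$ is contained in $\min P$''.

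It then remains to show this last condition is equivalent to $P$ having no order relations beyond equality. The direction ``$P$ has no order relations $\Rightarrow$ every maximal consistent antichain lies in $\min P$'' is immediate, since in that case $\min P = P$. For the converse, I would take an arbitrary $x \in P$: the singleton $\{x\}$ is a consistent antichain, so by finiteness of $P$ it extends to a maximal consistent antichain $A$ with $x \in A$; the hypothesis then gives $x \in A \subseteq \min P$, so $x$ is minimal. As $x$ was arbitrary, $\min P = P$, which is to say $P$ has no relations except equality. Finally, the reformulation ``$\Squelta_P = \closedstar v_0$'' follows from the general observation that $\closedstar v_0$ is always a subcomplex of $\Squelta_P$, and that it is all of $\Squelta_P$ exactly when every facet $F$ satisfies $F \cup \{v_0\} = F$ (since $F \cup \{v_0\}$, if it lies in a face, must lie in $F$ itself by maximality), i.e.\ when every facet contains $v_0$.

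The only step needing genuine care is the calculation in the second paragraph: correctly observing that $\downset A \subseteq N \subseteq A$ together with $A \subseteq \downset A$ forces $A = \downset A = N$, and that an antichain which is also a downset is precisely a set of minimal elements. Everything else --- extending a singleton to a maximal consistent antichain, and the closed-star reformulation --- is routine.
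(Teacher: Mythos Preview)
Your proof is correct and follows essentially the same route as the paper's: both invoke \cref{thm:facets-biject}, compute that $v_0 \in C(\downset A, A)$ exactly when $\downset A = A$ (equivalently, $A \subseteq \min P$), and then observe that every element of $P$ lies in some maximal consistent antichain. You are slightly more explicit than the paper in justifying the ``$\Squelta_P = \closedstar v_0$'' reformulation, which the paper treats as self-evident.
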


\begin{proof}
Recall from \cref{thm:facets-biject} that the facets of $\Squelta_P$ are the faces of the form $C(\downset A, A)$, where $A$ is a maximal consistent antichain of $P$. The vertex $v_0$ corresponds to the downset $\emptyset$, so a facet $C(\downset A, A)$ contains $v_0$ if and only if $(\downset A) \setminus A$ is empty, which happens if and only if all elements of $A$ are minimal in $P$. Every element of $P$, minimal or not, is contained in some maximal consistent antichain, so all facets contain $v_0$ if and only if all elements of $P$ are minimal. This precisely means that $P$ has no order relations except equality.
\end{proof}

Recall from \cref{thm:links-in-CAT(0)} that the link of $v_0$ is the crossing complex of $\min P$. Therefore, if $P$ has no order relations, so $\min P = P$, then $\link v_0$ is just the crossing complex of $\Squelta_P = \closedstar v_0$. Thus $\Squelta_P$ is essentially a kind of cubical cone over the crossing complex of $P$. The complexes shown in the last row of \cref{fig:examples} as well as \cref{fig:balanced-cubical-no,fig:balanced-not-embed-Rd} are examples of this situation, if the vertex in the centre is chosen to be the root vertex. (The name ``star'' is particularly appropriate in these last two examples.)

Now, let us return to $f$-vectors and $f$-polynomials.

\begin{dfn}
Suppose $\Delta$ is a simplicial complex with an $r$-colouring $\kappa_s$. For each subset $S \subseteq \{1, \dotsc, r\}$, define $f_S(\Delta)$ to be the number of faces of $\Delta$ whose image under $\kappa_s$ is exactly $S$. The tuple $\big( f_S(\Delta) \big)_{S \subseteq \{1, \dotsc, r\}}$ is called the \emph{coloured $f$-vector} or (confusingly) the \emph{flag $f$-vector} of $\Delta$ (with no obvious connection to the notion of a flag simplicial complex).

The coloured $f$-vector is a refinement of the usual $f$-vector of $\Delta$, since
\begin{equation*}
f_i(\Delta) = \sum_{\substack{S \subseteq \{1, \dotsc, r\} \\ \abs{S} = i}} f_S(\Delta).
\end{equation*}
We can also define a refinement of the $f$-polynomial: define the \emph{coloured $f$-polynomial} of $\Delta$ to be the following polynomial in $r$ variables:
\begin{align*}
f(\Delta, x_1, \dotsc, x_r) & \coloneqq \sum_{S \subseteq \{1, \dotsc, r\}} f_S(\Delta) \prod_{j \in S} x_j \\
& = \sum_{\sigma \in \Delta} \prod_{i \in \sigma} x_{\kappa_s(i)}.
\end{align*}
Notice that the usual $f$-polynomial $f(\Delta, t)$ can be obtained from the coloured $f$-polynomial by setting all of the variables equal to $t$.

We can also define coloured $f$-vectors and $f$-polynomials for an $r$-coloured cubical complex $\Squelta$, but now we need more information to specify the colour of a face. For each pair $S, T$ of disjoint subsets of $\{1, \dotsc, r\}$, let $f_{S,T}(\Squelta)$ be the number of faces $\sigma$ of $\Squelta$ where:
\begin{itemize}
	\item if the $i$th coordinate of $\kappa_c(v)$ is $1$ for some vertex $v \in \sigma$ and $0$ for some other vertex $w$, then $i \in T$, and
	\item if the $i$th coordinate of $\kappa_c(v)$ is $1$ for all vertices $v$, then $i \in S$.
\end{itemize}
For example, if $\kappa_c$ assigns the colours $1000$, $1100$, $1010$ and $1110$ to the vertices of $\sigma$, then this face contributes to the $f$-number $f_{\{1\}, \{2,3\}}(\Squelta)$, since all vertices have a $1$ in the first position and they vary in the $2$nd and $3$rd positions.

Now, define the following polynomial in $2r$ variables:
\begin{align*}
f(\Squelta, x_1, \dotsc, x_r, y_1, \dotsc, y_r) & \coloneqq \sum_{S \subseteq \{1, \dotsc, r\}} \ \sum_{T \subseteq \{1, \dotsc, r\} \setminus S} f_{S,T}(\Squelta) \prod_{k \in T} x_k \prod_{j \in S} y_j.
\end{align*}
Note that the original $f$-polynomial $f(\Squelta, t)$ can be recovered by setting $x_i = t$ and $y_i = 1$ for all $i$: in an $s$-dimensional face $\sigma$ of $[0,1]^r$, exactly $s$ coordinates vary, so setting $x_i = t$ and $y_i = 1$ means that a $s$-dimensional face of $\Squelta$ contributes to the degree $s$ term in the $f$-polynomial $f(\Squelta, t)$.
\end{dfn}

Let us examine how this polynomial behaves for $\CAT(0)$ cubical complexes.

\begin{lma} \label{thm:coloured-polynomial-CAT0}
Suppose $\kappa_s$ is a colouring of $\Delta_P$, and let $\kappa_c$ be the colouring of $\Squelta_P$ constructed from $\kappa_s$ in the proof of \cref{thm:Squelta-balanced-Delta-balanced}. Then
\begin{align*}
f(\Squelta_P, x_1, \dotsc, x_r, y_1, \dotsc, y_r) & = \sum_{C(I,M) \in \Squelta_P} \ \prod_{k \in \kappa_s(M)} x_k \prod_{j \in S_{I, M}} y_j
\end{align*}
where $S_{I, M}$ is the set of colours appearing in $I$ an odd number of times, but not in $M$.
\end{lma}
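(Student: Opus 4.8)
The plan is to rewrite the coloured $f$-polynomial of $\Squelta_P$ as a sum over its faces. Directly from the definition, $f(\Squelta_P, x_1,\dotsc,x_r,y_1,\dotsc,y_r) = \sum_{C(I,M)\in\Squelta_P} \prod_{k\in T_{I,M}} x_k \prod_{j\in S_{I,M}} y_j$, where for a face $\sigma = C(I,M)$ the set $T_{I,M}$ records the coordinates in which $\kappa_c$ varies over the vertices of $\sigma$, and $S_{I,M}$ records the coordinates in which $\kappa_c$ is constantly $1$ over the vertices of $\sigma$ (the remaining coordinates being constantly $0$). So the lemma reduces to identifying $T_{I,M}$ and $S_{I,M}$ explicitly in terms of $\kappa_s$, $I$ and $M$, and checking that $T_{I,M} = \kappa_s(M)$ and that $S_{I,M}$ agrees with the set named in the statement.

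I would make this identification, which was essentially already done inside the proof of \cref{thm:Squelta-balanced-Delta-balanced}, fully explicit. Recall that $\kappa_c$ sends a consistent downset $I'$ to the vector whose $j$th coordinate is the parity of $\#\{i\in I' : \kappa_s(i)=j\}$, and that the vertices of $C(I,M)$ are the sets $I\setminus N$ for $N\subseteq M$. Since $M\subseteq\max I$ is a consistent antichain, $\kappa_s$ is injective on $M$. Fix a colour $j$. If $j\notin\kappa_s(M)$, then removing a subset $N\subseteq M$ from $I$ changes no colour-$j$ elements, so the $j$th coordinate of $\kappa_c(I\setminus N)$ is the constant value $\#\{i\in I : \kappa_s(i)=j\}\bmod 2$. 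If instead $j\in\kappa_s(M)$, let $m_j$ be the unique element of $M$ of colour $j$; then the number of colour-$j$ elements of $I\setminus N$ is $\#\{i\in I : \kappa_s(i)=j\}$ when $m_j\notin N$ and one less when $m_j\in N$, so the $j$th coordinate takes both values as $N$ ranges over subsets of $M$. Hence $T_{I,M}=\kappa_s(M)$ — and in particular $\abs{T_{I,M}}=\abs M = \dim C(I,M)$, as it must be — while the coordinates on which $\kappa_c$ is constantly $1$ are exactly those $j\notin\kappa_s(M)$ for which colour $j$ appears an odd number of times in $I$, which is precisely the set $S_{I,M}$ of the statement.

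Substituting $T_{I,M}=\kappa_s(M)$ and this description of $S_{I,M}$ into the sum-over-faces expression for the coloured $f$-polynomial yields the claimed formula. I do not expect any real obstacle here: the only points requiring care are keeping the three possibilities for each coordinate (varies, constant $0$, constant $1$) straight when unpacking the definition of $f_{S,T}(\Squelta_P)$, and observing that the injectivity of $\kappa_s$ on $M$ is exactly what makes the varying coordinates biject with the elements of $M$, so that the monomial $\prod_{k\in\kappa_s(M)}x_k$ has the correct degree $\abs M$.
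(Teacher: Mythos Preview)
Your proposal is correct and follows essentially the same approach as the paper: both arguments unpack the definition of $\kappa_c$ on the vertices of $C(I,M)$ to see that the varying coordinates are exactly $\kappa_s(M)$ and the constantly-$1$ coordinates are the remaining colours appearing an odd number of times in $I$. Your version is simply more explicit than the paper's two-sentence proof, in particular spelling out the role of injectivity of $\kappa_s$ on $M$.
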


\begin{proof}
The colours assigned to the face $C(I,M)$ are the vectors $w \in \{0,1\}^r$ where position $i$ varies for each colour $i$ appearing in $M$, and otherwise position $i$ is always $1$ if colour $i$ appears an odd number of times in $I \setminus M$. Therefore, this face contributes to the $f$-number $f_{S_{I, M}, \kappa_s(M)}(\Squelta_P)$.
\end{proof}

With this observation, we can refine the proof of \cref{thm:f-polynomials} to get an $r$-colourable version.

\begin{thm}
If $\kappa_s$ and $\kappa_c$ are the related colourings from \cref{thm:coloured-polynomial-CAT0}, then $f(\Squelta_P, x_1, \dotsc, x_r, 1, \dotsc, 1) = f(\Delta_P, 1+x_1, \dotsc, 1+x_r)$.
\end{thm}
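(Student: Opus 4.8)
The plan is to mimic the proof of \cref{thm:f-polynomials}, but keeping track of colours via \cref{thm:coloured-polynomial-CAT0}. First I would set $y_1 = \dotsb = y_r = 1$ in the formula of \cref{thm:coloured-polynomial-CAT0}, which collapses the factor $\prod_{j \in S_{I,M}} y_j$ to $1$ and gives
\begin{equation*}
f(\Squelta_P, x_1, \dotsc, x_r, 1, \dotsc, 1) = \sum_{C(I,M) \in \Squelta_P} \ \prod_{k \in \kappa_s(M)} x_k.
\end{equation*}

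Next I would invoke the bijection used in the proof of \cref{thm:f-polynomials}: via \cref{thm:antichains-downsets}, faces $C(I,M)$ of $\Squelta_P$ correspond to pairs $(A,M)$ where $A = \max I$ is a consistent antichain of $P$ and $M \subseteq A$. Since $A$ is a face of $\Delta_P$, the colouring $\kappa_s$ is injective on $A$, hence on $M$, so $\prod_{k \in \kappa_s(M)} x_k = \prod_{m \in M} x_{\kappa_s(m)}$. Substituting this and interchanging the order of summation,
\begin{equation*}
f(\Squelta_P, x_1, \dotsc, x_r, 1, \dotsc, 1) = \sum_A \ \sum_{M \subseteq A} \ \prod_{m \in M} x_{\kappa_s(m)} = \sum_A \ \prod_{a \in A} \bigl( 1 + x_{\kappa_s(a)} \bigr),
\end{equation*}
where $A$ ranges over the consistent antichains of $P$ and the last equality is the usual expansion of a product of binomials.

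Finally I would recognise the right-hand side. The faces of $\Delta_P$ are exactly the consistent antichains of $P$, so by the definition of the coloured $f$-polynomial,
\begin{equation*}
f(\Delta_P, 1 + x_1, \dotsc, 1 + x_r) = \sum_{\sigma \in \Delta_P} \ \prod_{i \in \sigma} \bigl( 1 + x_{\kappa_s(i)} \bigr) = \sum_A \ \prod_{a \in A} \bigl( 1 + x_{\kappa_s(a)} \bigr),
\end{equation*}
which is the same expression, completing the proof. There is no real obstacle here --- the only point requiring a moment's care is the observation that $\kappa_s$ is injective on $M$, which is what lets us pass from a product indexed by the colour set $\kappa_s(M)$ to a product indexed by the elements of $M$ themselves; and as a sanity check, setting all $x_k$ equal to $t$ recovers \cref{thm:f-polynomials} verbatim.
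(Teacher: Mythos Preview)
Your proposal is correct and follows essentially the same route as the paper's proof: both specialise \cref{thm:coloured-polynomial-CAT0} at $y_j=1$, reindex the sum over faces $C(I,M)$ as a sum over consistent antichains $A$ with $M\subseteq A$, collapse $\sum_{M\subseteq A}\prod_{m\in M}x_{\kappa_s(m)}$ to $\prod_{a\in A}(1+x_{\kappa_s(a)})$, and identify the result with $f(\Delta_P,1+x_1,\dotsc,1+x_r)$. Your explicit remark that $\kappa_s$ is injective on $M$ (justifying the switch from indexing by $\kappa_s(M)$ to indexing by $M$) is a helpful detail the paper leaves implicit.
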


\begin{proof}
As in the proof of \cref{thm:f-polynomials}, let ``$A \trianglelefteq P$'' mean ``$A$ is a consistent antichain of $P$''.


Then,
\begin{align*}
f(\Squelta_P, x_1, \dotsc, x_r, 1, \dotsc, 1) 
& = \sum_{C(I,M) \in \Squelta_P} \ \prod_{k \in \kappa_s(M)} x_k \\
& = \sum_{A \trianglelefteq P} \sum_{M \subseteq A} \prod_{m \in M} x_{\kappa_s(m)} \\
& = \sum_{A \trianglelefteq P} \prod_{m \in A} \left( 1 + x_{\kappa_s(m)} \right) \\
& = f(\Delta_P, 1+x_1, \dotsc, 1+x_r). \qedhere
\end{align*}
\end{proof}

\bibliographystyle{apa}
\bibliography{CAT0-refs.bib} 

\end{document}